%% file: subgauss_arxiv.tex
\documentclass{article}

\usepackage[utf8]{inputenc}
\usepackage[hscale=.7, vscale=.8]{geometry}
\usepackage{bbm, amsmath, amssymb, amsthm, bm}
\usepackage{natbib}
\usepackage[colorlinks=true, citecolor={blue}]{hyperref}
\usepackage{mathtools}

\usepackage[utf8]{inputenc} % allow utf-8 input
\usepackage[T1]{fontenc}    % use 8-bit T1 fonts
\usepackage{hyperref}       % hyperlinks
\usepackage{url}            % simple URL typesetting
\usepackage{booktabs}       % professional-quality tables
\usepackage{amsfonts}       % blackboard math symbols
\usepackage{nicefrac}       % compact symbols for 1/2, etc.
\usepackage{microtype}      % microtypography
\usepackage{xcolor}         % colors
\usepackage{pgf, pgfplots}
\usepackage{latexsym}
\usepackage{amssymb}
\usepackage{amsthm, mathrsfs, bbm}
\usepackage{amscd}
\usepackage{amsmath}
\usepackage{amsthm}
\usepackage{titlesec}
\usepackage{url}
\usepackage{blindtext}
\usepackage{svg}
\usepackage{wrapfig}
\usepackage{wrapstuff}
\usepackage{enumitem}
\usepackage{authblk}

\theoremstyle{definition}
\newtheorem*{theorem*}{Theorem}
\newtheorem{theorem}{Theorem}

\newtheorem*{lemma*}{Lemma}

\newtheorem*{proposition*}{Proposition}
\newtheorem{example}{Example}
\newtheorem{proposition}{Proposition}

\newtheorem{corollary}{Corollary}

\newcommand*\eps{\epsilon}

  \DeclareMathOperator*{\argmin}{argmin}
\renewcommand*\epsilon{\varepsilon}
\renewcommand*\phi{\varphi}
\newcommand{\R}{\mathbb{R}}

\newcommand{\N}{\mathbb{N}}

\newcommand{\E}{\mathbb{E}}
\newcommand{\1}{\mathbbm{1}}
\renewcommand{\P}{\mathbb{P}}

\DeclareMathOperator*{\argmax}{arg\,max}

\title{Estimation of the sub-Gaussian parameter}
\author[1]{Jason Liu}
\author[1]{Min Xu\thanks{Corresponding author: \texttt{mx76@stat.rutgers.edu}}}
\author[2]{Jinchuan Xing}

\affil[1]{Department of Statistics, Rutgers University}
\affil[2]{Department of Genetics, Rutgers University}
\date{\today}
\begin{document}
\maketitle

\begin{abstract}

The sub-Gaussian parameter (also called the variance proxy) of a mean-zero random variable $X$ is defined as $\xi^2_* = \sup_{\lambda \in \mathbb{R}} L(\lambda)$ where $L(\lambda) = \frac{2}{\lambda^2} \log \mathbb{E} e^{\lambda X}$ is a weighted cumulant generating function. Despite the ubiquity of sub-Gaussian random variables, the estimation of $\xi^2_*$ has received little attention and is not yet well understood. In this work, we study a natural estimator of $\xi^2_*$ based on constrained maximization of the empirical analogue of $L$. We prove that the estimator is consistent bound the rates of convergence under assumptions on $L$: if $L$ has an maximizer, then our bound is $O_p(n^{-1/2 + \varepsilon})$ for any $\varepsilon > 0$; if the argmax of $L$ is also bounded, then the bound improves to $O_p(n^{-1/2})$. We show that our assumptions on $L$ are necessary by proving that the minimax risk over all sub-Gaussian distributions is $\Omega(1)$; imposing increasingly strong assumptions on the tail growth of $L$ yields a continuum of classes whose minimax lower bound interpolates between $\Omega(1/\log n)$ and $\Omega(1)$. Root-n rate is possible if we restrict to a subclass of distributions where $L$ attains its supremum in a bounded region, in which case our estimator is minimax optimal. If the underlying distribution is not sub-Gaussian, we show that our estimator goes to infinity with a divergence rate controlled by the tail of the distribution. Finally, we apply our estimator in a Gene Ontology (GO) enrichment study to construct p-values for a large-scale permutation test, showing that it can serve as a reliable alternative to the peaks-over-threshold approach, particularly in regimes where the peaks-over-threshold method is of uncertain validity.

 %\vspace{0.1in}

  %Despite the ubiquity of sub-Gaussian random variables in modern machine learning and statistics, the problem of estimating the sub-Gaussian parameter $\xi_*^2$ has received little attention. Here, we propose a natural estimator $\hat \xi_n^2$ for $\xi_*^2$ based on the maximization of a weighted cumulant generating function, and study its behavior in three regimes. First, if the underlying distribution is sub-Gaussian, we prove that the estimator is consistent. If the population maximizer exists, we show that it converges at rate $O_p(n^{-1/2 + \eps})$ for all $\eps > 0$, and if the population maximizers are bounded, we give a parametric rate of $O_p(n^{-1/2})$. Under stronger assumptions, we also show that $\hat \xi_n$ is asymptotically normal, and then give asymptotic confidence intervals for $\xi_*^2$. Second, we show that even for compactly supported distributions, no estimator can be uniformly consistent. If the distributions are uniformly compactly supported, the rate improves to $\Omega(1/\log n)$, and if the variance is bounded away from 0, the rate becomes $\Omega(n^{-1/2})$, which $\hat \xi^2_n$ achieves adaptively. Third, if the distribution is not sub-Gaussian, then $\hat \xi_n^2 \xrightarrow{\mathrm{a.s.}} \infty$ with divergence rate controlled by the tail of $X$. We close with an application to Ontology (GO) enrichment analysis, and show it can achieve comparable or superior results to the peaks-over-threshold approach.
\end{abstract}

\section{Introduction}

A random variable $X$ is called sub-Gaussian if there exists $\xi^2 > 0$ such that $\mathbb{E}[e^{\lambda X}] \leq e^{\lambda^2 \xi^2 / 2}$ for all $\lambda \in \mathbb{R}$; we let $\xi^2_* \geq 0$ denote the smallest such number and refer to it as the sub-Gaussian parameter of $X$, also called the variance proxy. Sub-Gaussian random variables play a central role in machine learning and statistics because they admit an exponential tail bound $\mathbb{P}(X \geq t) \leq \exp\{ - \frac{t^2}{2 \xi_*^2}\}$ (the sub-Gaussian condition implies that $\mathbb{E}X = 0$). This is in fact an equivalent characterization: any random variable satisfying such a tail bound is sub-Gaussian. Given independent and identically distributed random variables $X_1, \ldots, X_n$ from a sub-Gaussian distribution $P$, we study the estimation of the sub-Gaussian parameter $\xi^2_*$. 

Our motivation comes from using empirical sub-Gaussian tail bounds in large-scale permutation tests. Suppose we are testing $K$ hypotheses and, for each hypothesis $k$, use $M$ random permutations to generate the null test statistic samples $T^{(k)}_1, \ldots, T^{(k)}_M$. The empirical p-value is then bound below by $\frac{1}{M+1}$, so after a multiple testing correction such as Bonferroni, the power becomes trivial when the number of hypotheses is large compared to $M$; this is often the case when the permuted test statistics are expensive to compute so that $M$ cannot be too large. However, if we know a priori that the null distribution is sub-Gaussian (which holds for example if the test statistic is bounded), then we can instead plug in an estimate of $\xi^2_*$ into the sub-Gaussian tail bound to obtain p-value much smaller than $\frac{1}{M+1}$ and recover power in regimes where the empirical p-values are powerless. 

Beyond empirical tail bounds, estimating $\xi^2_*$ is also useful for testing the sub-Gaussianity of a random variable, which is of interest in cases where the theoretical risk bounds assume sub-Gaussianity or where the estimation algorithms are designed for settings where the data is sub-Gaussian. 

Our estimation approach is based on the observation that any $\xi^2 > 0$ that satisfies $\mathbb{E}[e^{\lambda X}] \leq e^{\lambda^2 \xi^2 / 2}$ must be larger than $L(\lambda) = \frac{2}{\lambda^2} \log \mathbb{E} e^{\lambda X}$ for any $\lambda \ne 0$. The function $L$ is continuous with a removable singularity at 0 so that we can define $L(0) = \text{Var}(X)$ by continuity. Therefore, $\xi^2_*$ can be equivalently defined as 

\[
\xi_*^2 := \sup_{\lambda \in \mathbb{R}} L(\lambda). 
\]

Given independent and identically distributed observations $X_1, \ldots, X_n$, we define the empirical analogue of $L$ as $L_n(\lambda) = \frac{2}{\lambda^2} \log \bigl\{ \frac{1}{n} \sum_{i=1}^n e^{ \lambda (X_i - \bar{X})} \bigr\}$ for $\lambda \neq 0$ and $L_n(0) = \frac{1}{n}\sum_{i=1}^n (X_i - \bar{X})^2$. One idea is to estimate $\xi^2_*$ by taking $\sup_{\lambda \in \mathbb{R}} L_n(\lambda)$, i.e. the sub-Gaussian parameter of the empirical distribution, but this can incur excessive estimation error due to the unstable behavior of $L_n(\lambda)$ when $\lambda$ is relatively large, e.g. of order $\sqrt{\log n}$. We thus define our estimator as $\hat{\xi}^2_n = \sup_{|\lambda| \leq C_n} L_n(\lambda)$ for a slowly diverging $C_n$ which can be taken as $(\log n)^{1/4}$. The truncation introduces a bias whose magnitude is measured by the truncation gap function

\[
\delta(C) := \sup_{|\lambda| \geq C} L(\lambda) - \sup_{|\lambda| \leq C} L(\lambda), \, \text{ for $C \geq 0$.}
\]

The function $\delta(C)$ is decreasing and has a non-positive limit as $C \rightarrow \infty$; the bias of $\hat{\xi}^2_n$ vanishes quickly if $\delta(C)$ decays at a fast rate. Although the truncation gap function is motivated by our estimator, we prove that it in fact plays a role in determining the fundamental difficulty of estimating $\xi_*^2$ through a minimax lower bound analysis. We summarize our key results as follows:
\begin{enumerate}[leftmargin=4mm]
\item \textbf{Section 2:} We prove that $\hat{\xi}^2_n$ is always consistent when the underlying distribution is sub-Gaussian. If there is $C_0 > 0$ such that $\delta(C_0) \leq 0$, then we show $|\hat{\xi}^2_n - \xi^2_*| = O_p(n^{-1/2 + \eps})$ for any $\eps > 0$. If $C_0$ satisfies $\delta(C_0) < 0$, then we further show that $|\hat{\xi}^2_n - \xi^2_*| = O_p(n^{-1/2})$. If in addition $\argmax L(\lambda) = \{\lambda^*\}$ and $L''(\lambda^*) < 0$, then we prove that $\hat{\xi}^2_n$ is asymptotically normal and construct confidence interval for $\xi^2_*$. 
\item \textbf{Section 3:} We prove that the minimax risk of estimating $\xi^2_*$ is governed by the decay rate of $\delta_P$. For $\gamma \in [0, 1/2]$, define the decreasing function
$r_\gamma(t) = 2^{\frac{1-2\gamma}{1-\gamma}}t^{\frac{2\gamma-1}{1-\gamma}}$.
The minimax risk over subclasses of distributions satisfying $\delta_P(C) \leq r_\gamma(C)$ 
is then lower bounded by $\Omega\bigl(\frac{1}{\log^{1-2\gamma}(n)}\bigr)$. At $\gamma = 1/2$, where $r_{1/2}(t) \equiv 1$ so that no assumptions are made on $\delta_P$ and the subclass coincide with the class of all sub-Gaussian distributions, the minimax risk is lower bounded by $\Omega(1)$, i.e. uniform consistency is impossible. At $\gamma = 0$, where 
$r_0(t) = 2t^{-1}$ and where the resulting subclass encompasses all distributions supported on $[-1, 1]$, the minimax lower bound improves to $\Omega(1/\log n)$. If we instead assume that there exists $C_0, \delta_0 > 0$ such that $\delta(C_0) \leq - \delta_0$, then the minimax risk is of order $n^{-1/2}$ and our estimator $\hat{\xi}^2_n$ is minimax optimal. 
\item \textbf{Section 4:} We prove that $\hat \xi_n^2$ diverges almost surely when the underlying distribution is not sub-Gaussian, where the rate of divergence is faster if the departure from sub-Gaussianity is more significant, i.e. if the distribution is more heavy-tailed.  
\end{enumerate}

\noindent \textbf{Related Work}: The most relevant work is \cite{mies2026empiricalorlicznorms}, which considers the estimation of the Orlicz norm 
\[
\sigma_\psi \equiv \|X\|_{\psi} := \inf \{\sigma > 0: \E \psi(|X|/\sigma) < 1\}
\]
where $\psi: [0, \infty) \to [0, \infty)$ is any increasing convex function with $\psi(0) = 0$ and $\lim_{x \to \infty} \psi(x) = \infty$. 
They study the estimator $\hat \sigma_\psi := \inf \{\sigma > 0: \frac1n\sum_{i=1}^n\psi(|X_i|/\sigma) < 1\}$, i.e. the Orlicz norm of the empirical distribution, and prove a central limit theorem for it under regularity conditions. However, the sub-Gaussian parameter and the sub-Gaussian norm (corresponding to $\psi_2(x) = e^{|x|^2} - 1$) are \textit{not} equivalent: \cite{leskel2026sharp} proved that $0.612\approx \sqrt{3/8} \leq \xi_*/\|X\|_{\psi_2} \leq \sqrt{\log 2} \approx 0.832$ and showed the bounds to be sharp. Also, while $\xi_*^2$ does not generally have a closed form, \cite{atouani2025optimalsubgaussianvarianceproxy} studied the function $g_Y(\lambda; \sigma^2) = \frac12\lambda^2\sigma^2 - \log \E[e^{\lambda X}]$ to give explicit expressions of $\xi_*^2$ for three-point distributions. Other studies on the empirical moment generating function include \cite{Stewart2003-cd}, which focus on the supremum over $\lambda$ of the normalized process $n^{-1/2}\sum_{i=1}^n e^{\lambda X_i - \psi(2\lambda)/2} - \mathbb E[e^{\lambda X - \psi(2\lambda)/2}]$ where $\psi(\lambda) = \log \E[e^{\lambda X}]$.

\noindent \textbf{Notation}: If $I \subset \R$, then we set $I/2 := \{\lambda/2: \lambda \in I\}$. We denote $[n] = \{1,2,\dots,n\}$. We do not take $\N$ to include $0$, and will use the notation $\N_0 := \N \cup \{0\}$. 

\section{An Estimator for the sub-Gaussian Parameter}

For a sub-Gaussian distribution $P$ on $\mathbb{R}$, we let $\xi^2_*(P)$ denote its sub-Gaussian parameter. For $\Xi > 0$, we define the class of distributions
\[
 \mathcal{P}(\Xi) := \{ P \,:\, P \text{ sub-Gaussian}, \, \xi_*^2(P) \leq \Xi \}.
\]
To motivate our estimator, we begin with the following elementary observation about $\xi_*^2$. 
\begin{proposition}
\label{prop:xi-L-char}
    If $X \sim P$ is sub-Gaussian with variance $\sigma^2$, then 
    \[
    \xi_*^2(P) = \sup_{\lambda \in \R} L(\lambda ; P),\, \quad \text{ where } 
    L(\lambda; P) := \begin{cases}
        \frac{2}{\lambda^{2}} \log \E_P [e^{\lambda X}] & \lambda \ne 0 \\
        \sigma^2 & \lambda = 0
    \end{cases}
    \]
    is a continuous function. When $P$ is fixed, we will write $L(\lambda) \equiv L(\lambda; P)$.
\end{proposition}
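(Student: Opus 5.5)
Three things need proof: that $\E_P e^{\lambda X}$ is finite and smooth in $\lambda$, so $L$ is well defined; that $L$ is continuous, including at $\lambda=0$; and that $\xi_*^2(P)=\sup_\lambda L(\lambda;P)$.

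\textbf{Finiteness and smoothness.} Since $P$ is sub-Gaussian, $M(\lambda):=\E_P[e^{\lambda X}]\le e^{\lambda^2\xi^2/2}<\infty$ for every $\lambda\in\R$. Hence $M$ is finite on all of $\R$, and $M(\lambda)\ge e^{\lambda \E X}>0$ by Jensen. By the standard dominated convergence argument for moment generating functions finite on an open interval, $M$ is $C^\infty$ (indeed real analytic) on $\R$. All moments exist, and $M'(0)=\E X$, $M''(0)=\E X^2$.

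\textbf{Mean zero.} As the introduction notes, sub-Gaussianity forces $\E X=0$. To see this, expand $\log M(\lambda)=\lambda\E X+O(\lambda^2)$ and combine it with $\log M(\lambda)\le\lambda^2\xi^2/2$. Dividing by $\lambda$ and letting $\lambda\to0^{\pm}$ gives $\E X\le 0$ and $\E X\ge 0$.

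\textbf{Continuity.} Write $\psi=\log M$. It is smooth with $\psi(0)=0$, $\psi'(0)=\E X=0$ and $\psi''(0)=\operatorname{Var}(X)=\sigma^2$. For $\lambda\neq0$, $L(\lambda)=2\psi(\lambda)/\lambda^2$ is a ratio of continuous functions with nonvanishing denominator, so it is continuous. At $0$, Taylor's theorem gives $\psi(\lambda)=\tfrac12\sigma^2\lambda^2+o(\lambda^2)$, so $L(\lambda)\to\sigma^2=L(0)$. Equivalently, apply L'Hôpital twice.

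\textbf{Characterization.} Let $S$ be the set of $\xi^2\ge0$ with $M(\lambda)\le e^{\lambda^2\xi^2/2}$ for all $\lambda$, so that $\xi_*^2=\inf S$. For $\lambda\ne0$, the inequality $M(\lambda)\le e^{\lambda^2\xi^2/2}$ is equivalent, after taking logs and dividing by $\lambda^2/2>0$, to $L(\lambda)\le\xi^2$. At $\lambda=0$ the condition $1\le1$ always holds. Hence $S=\{\xi^2\ge0:\xi^2\ge\sup_{\lambda\ne0}L(\lambda)\}$. This set is nonempty by sub-Gaussianity and closed, so its infimum is attained and equals $\max\bigl(0,\sup_{\lambda\ne0}L(\lambda)\bigr)$. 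By continuity at $0$, $\sup_{\lambda\ne0}L=\sup_{\lambda\in\R}L$. That supremum is at least $L(0)=\sigma^2\ge0$, so the $\max$ with $0$ is harmless. This gives $\xi_*^2=\sup_\lambda L(\lambda)$.

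\textbf{Main obstacle.} The only delicate step is justifying the differentiation under the expectation, i.e.\ the smoothness of $M$ near $0$. One way is to bound $|X|^k e^{\lambda X}\le C_{k,\epsilon}\,(e^{(\lambda+\epsilon)X}+e^{(\lambda-\epsilon)X})$ and use the finiteness of $M$ everywhere. Alternatively, for continuity alone it suffices to bound $|e^{\lambda x}-1-\lambda x-\lambda^2x^2/2|\le|\lambda x|^3e^{|\lambda x|}/6$. Then $\E|X|^3e^{|\lambda X|}<\infty$ for $|\lambda|\le1$ gives $M(\lambda)=1+\lambda^2\sigma^2/2+O(|\lambda|^3)$ directly.
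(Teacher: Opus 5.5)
Your proof is correct and follows essentially the same route as the paper. For each $\lambda \neq 0$ you rearrange the defining inequality $M(\lambda)\le e^{\lambda^2\xi^2/2}$ into $\xi^2 \ge L(\lambda)$, and you get continuity at $0$ from a second-order expansion of $\psi$; the paper does the same via L'H\^opital with $\psi''(0)=\sigma^2$. Your treatment of $\lambda=0$ is slightly cleaner: you use continuity to get $\sup_{\lambda\neq 0}L=\sup_{\lambda\in\R}L$, whereas the paper appeals to $\xi^2\ge\sigma^2$ without proving it. Your explicit check that $\E X=0$ also supplies a step the paper uses only implicitly.
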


Based on Proposition~\ref{prop:xi-L-char}, an immediate idea for estimating $\xi_*^2$ is to replace $L$ by its empirical analogue $L_n(\lambda) = \frac{2}{\lambda^2} \log \frac{1}{n}\sum_{i=1}^n e^{\lambda (X_i - \bar{X})}$ for $\lambda \neq 0$ and $L_n(0) = \frac{1}{n} \sum_{i=1}^n (X_i - \bar{X})^2$. Centering the empirical distribution by subtracting the sample mean $\bar{X}$ is critical even if $X_i$ is already mean-zero, otherwise $|L_n(\lambda)| \rightarrow \infty$ as $|\lambda| \rightarrow 0$. We may then consider estimating $\xi_*^2$ via $\sup_{\lambda \in \mathbb{R}} L_n(\lambda)$.  This however behaves poorly due to high variance of $L_n(\lambda)$ for large $\lambda$. In this work, we instead study the truncated maximization of $L_n$. We let $C_n > 0$ be a slowly diverging sequence and define our estimator
\[
\hat{\xi}_n^2 := \sup_{|\lambda| \leq C_n} L_n(\lambda). 
\]
We will see in our theoretical analysis that $C_n$ can be taken as $(\log n)^\alpha$ for any $\alpha \in (0, 1/2)$.

\begin{figure}
    \centering
    \resizebox{0.6\linewidth}{!}{\input{figures/log-log_plot.pgf}}
    \caption{\textbf{Log-log plots for the mean absolute deviation for truncated $\hat \xi^2_n$ and untruncated $\hat \xi^2_{n, U}$.} }
   \label{fig:trunc-untrunc-comparison}
\end{figure}

To illustrate the importance of truncation, we conduct the following numerical experiment: we draw $X_1, \dots, X_n \sim N(0, 1)$ and compute the unconstrained estimator 
$\hat \xi_{n, U}^{2} := \sup_{\lambda \in \R} L_n(\lambda)$. We also compare it against $\hat \xi_n^2$ with $C_n = (\log n)^{1/4}$. In Figure~\ref{fig:trunc-untrunc-comparison}, we display the log-mean absolute deviation for $\hat \xi_n^2$ and $\hat \xi^2_{n, U}$ at different values of $n$ the averaged over 600 simulations; we take $n \in \{2^{j} \times 10^{2}: 2 \leq j \leq 7\}$. We observe that the error curve for $\hat \xi_n^2$ has a slope of approximately $-1/2$ in contrast to a slope of approximately $-1/4$ for $\hat \xi_{n, U}^2$. This is because $L_n(\lambda)$ exhibits unstable behavior when $\lambda$ is of order $\sqrt{\log n}$, slowing the convergence. Truncating removes this issue. 

\subsection{Convergence of the empirical moment generating function}

In order to study the consistency of $\hat \xi^2_n$ and quantify the rate of convergence, we must establish results on the uniform convergence of $L_n$ to $L$. To do this, we first study the empirical moment generating function. We shall denote the moment generating function and its empirical analogues by  
\[
M(\lambda; P) := \E_P[e^{\lambda X}], \quad M_n(\lambda) = \frac1n\sum_{i=1}^n e^{\lambda (X_i - \bar X)}, \quad \tilde M_n(\lambda) = \frac1n\sum_{i=1}^n e^{\lambda X_i}. 
\]
Similarly, we shall write $\psi(\lambda; P) := \log M(\lambda)$, $\psi_n(\lambda) = \log M_n(\lambda)$, and $\tilde \psi_n(\lambda) = \log \tilde M_n(\lambda)$. When $P$ is fixed, we will abbreviate $M(\lambda) \equiv M(\lambda ; P)$ and $\psi(\lambda) \equiv \psi(\lambda; P)$.

\begin{proposition}[\cite{feuerverger1989empirical}, Theorems 2.1 and 2.4]
\label{prop:feuerverger-uniform-convergence}
    Let $I$ be the largest interval on which $M(\lambda)$ is finite, and define $\tilde \psi_n(\lambda) = \log \tilde M_n(\lambda)$. Let $k \in \N$, and $K$ be a compact interval. Let $\tilde{\psi}^{(k)}$ be the $k$-th derivative of $\tilde{\psi}$ and likewise for $\tilde{\psi}_n^{(k)}$. 
    \begin{enumerate}
        \item[(a)] If $K\subset I$, then  $\sup_{\lambda \in K} |\tilde \psi_n^{(k)}(\lambda) - \tilde \psi^{(k)}(\lambda)| \xrightarrow{\mathrm{a.s.}} 0$ as $n \to \infty$. 
        \item[(b)] If $K \subset I/2$, then the empirical process $\sqrt n(\tilde \psi_n^{(k)}(\lambda) - \tilde \psi^{(k)}(\lambda))$ indexed by $\lambda$ converges weakly to a mean-zero Gaussian process on $K$. 
    \end{enumerate}
\end{proposition}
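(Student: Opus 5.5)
The statement concerns the empirical cumulant generating function $\tilde\psi_n = \log \tilde M_n$ and is a cited result of \cite{feuerverger1989empirical}. I would prove it by working first at the level of $\tilde M_n$ and its derivatives, then transferring to $\tilde\psi_n$ through the smooth map $\log$. Note that $\tilde M_n^{(k)}(\lambda) = \frac1n\sum_i X_i^k e^{\lambda X_i}$. Its target is $M^{(k)}(\lambda)=\E[X^k e^{\lambda X}]$, which is finite on the interior of $I$. I will read ``$K\subset I$'' as $K$ contained in the interior of $I$; when $I$ is closed at an endpoint one can still handle that endpoint for $k=0$, but I would state the interior version.

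For (a), I would use a uniform strong law for the class $\{x\mapsto x^j e^{\lambda x}:\lambda\in K\}$, $j\le k$. Since $K=[a,b]$ is compact inside the interior of $I$, there is $\eta>0$ with $[a-\eta,b+\eta]\subset I$. The envelope $F_j(x)=|x|^j(e^{ax}+e^{bx})$ is then integrable, because $|x|^j e^{\lambda x}\le C_\eta (e^{(a-\eta)x}+e^{(b+\eta)x})$. The map $\lambda\mapsto x^j e^{\lambda x}$ is continuous for each $x$. The standard bracketing argument (Glivenko--Cantelli for a class continuous in a parameter ranging over a compact set, with integrable envelope, via dominated convergence) then gives $\sup_K|\tilde M_n^{(j)}-M^{(j)}|\to0$ almost surely.

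Next, $\tilde\psi_n^{(k)}$ is a fixed polynomial (Faà di Bruno) in $\tilde M_n^{(j)}/\tilde M_n$, $j\le k$. The limit $M$ is continuous and positive on $K$, so $\inf_K M>0$. Hence almost surely, for large $n$, $\inf_K \tilde M_n\ge \tfrac12\inf_K M$. The polynomial is uniformly continuous on the relevant bounded set, so (a) follows.

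For (b), I would first prove a functional CLT for $\sqrt n(\tilde M_n^{(j)}-M^{(j)})$ in $\ell^\infty(K)$, jointly over $j\le k$. The condition $K\subset I/2$ means $2\lambda\in I$. Taking a small margin again, the square-integrable envelope $F_j^2(x)=x^{2j}(e^{2ax}+e^{2bx})$ is integrable. The functions are also Lipschitz in $\lambda$: $|x^je^{\lambda x}-x^je^{\lambda' x}|\le |\lambda-\lambda'|\,|x|^{j+1}(e^{(a-\eta)x}+e^{(b+\eta)x})$, and this Lipschitz constant is square integrable once $K$ is shrunk slightly inside $I/2$. The parametric Lipschitz bracketing bound (van der Vaart, Example 19.7) then makes the class Donsker. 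Finally I would apply the functional delta method for the map $(m_0,\dots,m_k)\mapsto$ (Faà di Bruno polynomial in $m_j/m_0$). This map is Hadamard differentiable at $(M,\dots,M^{(k)})$ in $\ell^\infty(K)^{k+1}$ because $\inf_K M>0$. The derivative is a bounded linear map, so the limit is a mean-zero Gaussian process.

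The main obstacle is the boundary of the interval: making sure that $K\subset I/2$ leaves room for square-integrability of the Lipschitz constants. If $K$ touches the boundary of the interior of $I/2$, the $|x|^{j+1}$ factor could fail to be square integrable. I would therefore interpret $K$ as compact inside the open interval, which gives the needed slack $\eta$.
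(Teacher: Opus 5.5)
The paper does not prove this statement; it imports it from Feuerverger (1989). There is therefore no in-paper argument to match, and your proposal stands as a correct, self-contained proof.

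It follows the same two-step template the paper uses for its own results. First, treat $\tilde M_n^{(j)}$ as an empirical process indexed by $\lambda\in K$, with bracketing supplied by the Lipschitz-in-$\lambda$ bound $|x|^{j+1}\sup_{\lambda\in K}e^{\lambda x}$; this is exactly the device in the proof of Proposition~\ref{prop:mgf-uniform-donsker}. Second, transfer to $\psi^{(k)}$ through the smooth map $(M,\dots,M^{(k)})\mapsto\psi^{(k)}$, as in Corollary~\ref{cor:cgf-unif-conv}. There the recursion for $\psi^{(k)}$ and the bound $M\ge 1$ play the role of your Fa\`a di Bruno polynomial and $\inf_K\tilde M_n\ge\frac12\inf_K M$.

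Compared with the bare citation, your route buys three things:
\begin{enumerate}
\item It shows where the slack is used: an integrable envelope inside $I$ for (a), and a square-integrable Lipschitz constant inside $I/2$ for (b).
\item It identifies the limit in (b) as the image of the Gaussian limit of $(\sqrt n(\tilde M_n^{(j)}-M^{(j)}))_{j\le k}$ under the derivative of that map.
\item It extends to uniformity over $\mathcal P(\Xi)$ by replacing Example 19.7 with the uniform bracketing result of Proposition~\ref{prop:uniform-donkser}.
\end{enumerate}

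Your reading of $K\subset I$ and $K\subset I/2$ as compact subsets of the interiors is necessary, not merely convenient. Take the density proportional to $e^{-x}x^{-2}$ on $(1,\infty)$. Then $I=(-\infty,1]$ but $\E[Xe^{X}]=\infty$, so $\psi'$ is unbounded on $K=[0,1]$. Hence (a) fails if $K$ is allowed to contain a closed endpoint of $I$.
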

Since our truncation level $C_n$ is diverging (albeit at a slow rate), we need control over the convergence of $\tilde \psi_n$ and its derivatives beyond a fixed compact set. The next theorem establishes this for the empirical moment generating function.
\begin{theorem}
\label{prop:sub-gaussian-convergence-lemma}
    Let $k \in \N_0$.
    Fix $\alpha \in (0, 1/2)$, and set $C_n := (\log n)^\alpha$. Let $P$ be a sub-Gaussian distribution. Then for all $\eps > 0$,
    \[
    \sup_{\lambda \in [-C_n, C_n]} |M_n^{(k)}(\lambda) - M^{(k)}(\lambda; P)| = O_p(n^{-1/2 + \eps}),
    \]
    and this convergence is uniform over $P \in \mathcal P(\Xi)$. 
\end{theorem}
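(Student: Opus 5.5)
We split the error into a non-centered part and a centering part. Write $M_n(\lambda) = e^{-\lambda \bar X}\tilde M_n(\lambda)$, so by the Leibniz rule
\[
M_n^{(k)}(\lambda) = \sum_{j=0}^k \binom{k}{j} (-\bar X)^{k-j} e^{-\lambda \bar X}\, \tilde M_n^{(j)}(\lambda).
\]
The plan has two steps. First, show that $\sup_{|\lambda|\le C_n} |\tilde M_n^{(j)}(\lambda) - M^{(j)}(\lambda)| = O_p(n^{-1/2+\eps})$ for every $j \le k$. Second, show that $\bar X = O_p(n^{-1/2})$ uniformly over $\mathcal P(\Xi)$, since $\mathrm{Var}(X)\le \xi_*^2 \le \Xi$. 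Combining these with $|\lambda \bar X| \le C_n |\bar X| = o_p(1)$, so that $e^{-\lambda \bar X} = 1 + O_p(C_n n^{-1/2})$, the $j=k$ term gives the main error. The terms with $j<k$ are $O_p(n^{-1/2})$ times $\sup_{|\lambda|\le C_n} |M^{(j)}(\lambda)|$, plus smaller terms. We will bound this supremum next and find it is $n^{o(1)}$.

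\textbf{Deterministic bounds.} For $P \in \mathcal P(\Xi)$ and $|\lambda|\le C_n$, write $|X|^j e^{\lambda X} \le c_{j,\eta}\, e^{(|\lambda|+\eta)|X|}$. Together with $\E e^{\mu |X|} \le 2 e^{\mu^2 \Xi/2}$, this gives
\[
\E[X^{2j} e^{2\lambda X}] \le c\, e^{2(C_n+1)^2 \Xi}.
\]
Since $C_n^2 = (\log n)^{2\alpha}$ with $2\alpha<1$, this is $e^{O((\log n)^{2\alpha})} = n^{o(1)}$, and in particular $\le n^{\eps}$ for large $n$ uniformly in $P\in\mathcal P(\Xi)$. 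The same bound controls $\sup_{|\lambda|\le C_n}|M^{(j)}(\lambda)|$. This is where the restriction $\alpha<1/2$ is used.

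\textbf{Uniform control of the empirical process.} This is the main obstacle: we need a supremum over a growing interval, not a pointwise bound, and we cannot appeal to Proposition~\ref{prop:feuerverger-uniform-convergence} because the interval is not fixed.

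1. Discretize $[-C_n, C_n]$ with a grid of mesh $n^{-1}$, giving $O(C_n n)$ points.
2. At each grid point, Chebyshev's inequality gives a deviation of $n^{-1/2} n^{\eps/2}$ times $t$, with failure probability at most $t^{-2}$. A plain union bound over $\sim n$ points is too costly. Instead, we control the supremum via the derivative. By the fundamental theorem of calculus on the interval,
\[
\sup_\lambda |D_n^{(j)}(\lambda)| \le |D_n^{(j)}(0)| + \int_{-C_n}^{C_n} |D_n^{(j+1)}(\mu)|\,d\mu,
\]
where $D_n = \tilde M_n - M$.
3. Taking expectations and applying Cauchy–Schwarz pointwise gives
\[
\E \sup_\lambda |D_n^{(j)}| \le n^{-1/2}\Bigl(\sqrt{\E X^{2j}} + 2C_n \sup_{\mu}\sqrt{\E[X^{2j+2}e^{2\mu X}]}\Bigr) \le n^{-1/2} C_n\, n^{o(1)}.
\]
This is a Sobolev-type embedding argument.

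By Markov's inequality, this yields $O_p(n^{-1/2+\eps})$ with constants depending only on $(\Xi, k, \alpha, \eps)$. That dependence is what makes the bound uniform over $\mathcal P(\Xi)$.
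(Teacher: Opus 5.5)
Your proof is correct, and it takes a genuinely different route from the paper's.

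\textbf{The paper's route.} The paper truncates at $B_n=(\log n)^\beta$ and first proves a bounded-variable result. It applies Hoeffding's inequality at each point of an $n^{-1}$-grid, handling the centering by a mean-value argument in $\bar X$. It then takes a union bound, which the exponential tails make affordable, and interpolates between grid points with the Lipschitz bound $e^{2C_nB_n}=n^{o(1)}$; this is what forces $\alpha+\beta<1$. Finally it controls the truncation remainders with the generalized H\"older inequality and sub-Gaussian tails.

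\textbf{Your route.} You avoid truncation and exponential concentration altogether. The factorization $M_n=e^{-\lambda\bar X}\tilde M_n$ isolates the centering; the paper uses this identity only later, in Proposition~\ref{prop:centered-mgf-uniform-donsker}. The bound $\sup_\lambda|D_n^{(j)}|\le|D_n^{(j)}(0)|+\int_{-C_n}^{C_n}|D_n^{(j+1)}|$ is a one-dimensional Sobolev embedding. It turns pointwise variance bounds into a bound on the expected supremum, using only the second moments $\E[X^{2j+2}e^{2\mu X}]\le c\,e^{2(C_n+1)^2\Xi}=n^{o(1)}$.

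\textbf{What each route buys.} Yours gives:
\begin{itemize}
\item a shorter argument with the explicit bound $\E\sup_{|\lambda|\le C_n}|\tilde M_n^{(j)}-M^{(j)}|\lesssim n^{-1/2}C_n e^{(C_n+1)^2\Xi}$;
\item transparent uniformity over $\mathcal P(\Xi)$;
\item no auxiliary exponent $\beta$ to tune; in the paper, $\beta$ must in effect exceed $1/2$ so that $n\,\P(|X|>B_n)\to 0$, a condition not stated there;
\item supremum control of every term, whereas the paper bounds its empirical truncation remainder only at fixed $\lambda$ before applying Markov. That gap is harmless, since the remainder vanishes identically on the event $\max_j|X_j|\le B_n$.
\end{itemize}
The paper's route yields a standalone exponential-tail result for bounded variables (Proposition~\ref{prop:bounded-convergence-lemma}). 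That advantage disappears in the final $O_p$ statement once the truncation remainder is handled by Markov.

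Your grid discretization and the Chebyshev-at-grid-points remark are never used and can be deleted.
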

Having established a result on the convergence of the empirical moment generating function, we can easily extend the result to cumulant generating function. 
\begin{corollary}
\label{cor:cgf-unif-conv}
    Let $k \in \N_0$. Fix $\alpha \in (0, 1/2)$, and set $C_n := (\log n)^\alpha$. Let $P$ be a sub-Gaussian distribution. Then for all $\eps > 0$, 
    \[
    \sup_{\lambda \in [-C_n, C_n]}|\psi_n^{(k)}
(\lambda) - \psi^{(k)}(\lambda; P)| = O_p(n^{-1/2 + \eps}),   \]
and this convergence is uniform over $P \in \mathcal P(\Xi)$. 
\end{corollary}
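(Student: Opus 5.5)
The plan is to deduce the corollary from Theorem~\ref{prop:sub-gaussian-convergence-lemma} via Fa\`a di Bruno's formula. Write $\psi_n = \log M_n$ and $\psi = \log M$. Then $\psi_n^{(k)}$ is a universal polynomial in the ratios $M_n^{(j)}/M_n$ for $j \leq k$, and $\psi^{(k)}$ is the same polynomial in $M^{(j)}/M$. Indeed,
\[
\psi^{(k)} = \sum_{\pi} c_\pi \prod_{j} \Bigl(\frac{M^{(j)}}{M}\Bigr)^{m_j(\pi)},
\]
where the sum runs over partitions $\pi$ of $k$, $m_j(\pi)$ is the number of blocks of size $j$, and the coefficients $c_\pi$ do not depend on $P$. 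It therefore suffices to show that, uniformly over $|\lambda| \leq C_n$ and $P \in \mathcal P(\Xi)$, each ratio $R_{n,j} := M_n^{(j)}/M_n$ is within $O_p(n^{-1/2+\eps})$ of $R_j := M^{(j)}/M$, and that the $R_j$ themselves are not too large. The difference of the two polynomials is then controlled by telescoping products, at the cost of factors that grow at most polylogarithmically in $n$; such factors are absorbed by shrinking $\eps$.

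\textbf{Step 1: bounds on the population quantities.} By Jensen's inequality and $\E X = 0$, we have $M(\lambda) \geq e^{\lambda \E X} = 1$. Sub-Gaussianity with $\xi_*^2 \leq \Xi$ gives
\[
|M^{(j)}(\lambda)| \leq \E |X|^j e^{\lambda X} \leq (\E X^{2j})^{1/2} M(2\lambda)^{1/2} \leq c_j \Xi^{j/2} e^{\lambda^2 \Xi}.
\]
On $|\lambda| \leq C_n = (\log n)^\alpha$ with $\alpha < 1/2$, the factor $e^{\lambda^2 \Xi} \leq e^{\Xi (\log n)^{2\alpha}}$ is $n^{o(1)}$, that is, $o(n^{\eps'})$ for every $\eps' > 0$. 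Hence $\sup_{|\lambda| \leq C_n} |R_j| = n^{o(1)}$ uniformly over $\mathcal P(\Xi)$.

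\textbf{Step 2: the denominator.} Theorem~\ref{prop:sub-gaussian-convergence-lemma} with $k = 0$ gives $\sup_{|\lambda|\le C_n}|M_n - M| = O_p(n^{-1/2+\eps}) = o_p(1)$. Combined with $M \geq 1$, this shows that with probability tending to one (uniformly in $P$) we have $M_n \geq 1/2$ on $[-C_n, C_n]$. On this event,
\[
|R_{n,j} - R_j| \leq \frac{|M_n^{(j)} - M^{(j)}|}{M_n} + |M^{(j)}|\,\frac{|M_n - M|}{M_n M} \leq 2\,|M_n^{(j)} - M^{(j)}| + 2\,|M^{(j)}|\,|M_n - M|.
\]
By the theorem applied with $k = j$ and $k = 0$, together with Step 1, the right-hand side is $O_p(n^{-1/2+\eps} \cdot n^{o(1)})$. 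This step also settles the case $k = 0$ directly, since on the same event $|\log M_n - \log M| \leq 2|M_n - M|$ because $\log$ is $2$-Lipschitz on $[1/2, \infty)$.

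\textbf{Step 3: the polynomial.} For a monomial $\prod_i a_i$ versus $\prod_i b_i$ with a bounded number of factors, telescoping gives
\[
\Bigl|\prod_i a_i - \prod_i b_i\Bigr| \leq \sum_i |a_i - b_i| \prod_{l \neq i} \max(|a_l|, |b_l|).
\]
The $|R_j|$ are $n^{o(1)}$ by Step 1, and the $|R_{n,j}|$ are $n^{o(1)} + o_p(1)$ by Step 2. Thus the total error is $O_p(n^{-1/2+\eps+o(1)})$. Since $\eps$ is arbitrary, this is $O_p(n^{-1/2+\eps})$ for every $\eps > 0$.

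The main obstacle is uniformity. The constants in Steps 1 and 2 must depend only on $\Xi$ and $k$, and the subexponential growth $e^{\Xi C_n^2} = n^{o(1)}$ must be checked carefully so that it is absorbed into the $n^{\eps}$ slack. This is exactly where the restriction $\alpha < 1/2$ is used.
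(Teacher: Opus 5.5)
Your proof is correct and is essentially the paper's argument. Both reduce to Theorem~\ref{prop:sub-gaussian-convergence-lemma} by expressing $\psi^{(k)}$ through the ratios $M^{(j)}/M$, bounding $|M^{(j)}|$ by $n^{o(1)}$ on $[-C_n, C_n]$ via Cauchy--Schwarz and the sub-Gaussian MGF bound, and controlling the denominators; the only difference is that the paper uses the recursion $M^{(k)} = \sum_{j=1}^{k}\binom{k-1}{j-1}\psi^{(j)}M^{(k-j)}$ with induction on $k$, where you use the closed-form Fa\`a di Bruno expansion and telescoping. One small simplification the paper exploits is that $\frac1n\sum_i (X_i - \bar X) = 0$, so Jensen gives $M_n(\lambda) \geq 1$ deterministically and your high-probability event $\{M_n \geq 1/2\}$ is not needed.
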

Finally, control of the cumulant generating function extends directly to the $L_n$ function that we are maximizing. 
\begin{proposition}
\label{prop:L_n-cgf-bound}
    Let $C_n > 0$. For each $k \in \N_0$, $L_n$ and $L$ are differentiable everywhere on $\R$, and 
    \[
    \sup_{\lambda \in [-C_n, C_n]} |L_n^{(k)}(\lambda) - L^{(k)}(\lambda)| \leq \sup_{\lambda \in [-C_n, C_n]} |\psi^{(k+2)}_n(\lambda) - \psi^{(k+2)}(\lambda)|.
    \]
    Consequently, 
    \begin{enumerate}
        \item[(a)] If $C > 0$ is fixed, $\displaystyle \sup_{\lambda \in [-C, C]} |L_n^{(k)}(\lambda) - L^{(k)}(\lambda)| = O_p(n^{-1/2})$ uniformly over $P \in \mathcal{P}(\Xi)$. 
        \item[(b)] If $C_n = (\log n)^\alpha$ for $\alpha \in (0, 1/2)$, then for any $\eps > 0$, uniformly over $P \in \mathcal P(\Xi)$,
        \[
         \sup_{\lambda \in [-C_n, C_n]} |L_n^{(k)}(\lambda) - L^{(k)}(\lambda; P)| = O_p(n^{-1/2 + \eps}).
        \]
    \end{enumerate}
\end{proposition}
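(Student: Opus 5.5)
The plan is to write both $L$ and $L_n$ as weighted averages of second derivatives of the corresponding cumulant generating functions, via Taylor's formula with integral remainder. The deviation bound and all differentiability claims then follow by differentiating under the integral sign.

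\textbf{Step 1: the integral representation.} Since $P$ is sub-Gaussian, $M(\lambda)<\infty$ for all $\lambda\in\R$. Hence $\psi$ is $C^\infty$ on $\R$, with $\psi(0)=0$ and $\psi'(0)=\E X=0$. Taylor's formula with integral remainder gives $\psi(\lambda)=\lambda^2\int_0^1(1-t)\psi''(t\lambda)\,dt$, so
\[
L(\lambda)=2\int_0^1(1-t)\,\psi''(t\lambda)\,dt \qquad\text{for all }\lambda\in\R .
\]
At $\lambda=0$ this reads $\psi''(0)=\sigma^2$, consistent with Proposition~\ref{prop:xi-L-char}. The same holds for $L_n$: $\psi_n$ is smooth, $\psi_n(0)=\log 1=0$, and $\psi_n'(0)=\frac1n\sum_i(X_i-\bar X)=0$ precisely because of the centering. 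This is where centering is needed.

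\textbf{Step 2: derivatives and the deviation bound.} On any compact set, $\psi''(t\lambda)$ and its $\lambda$-derivatives are jointly continuous in $(t,\lambda)$. Differentiating under the integral is therefore legitimate and yields
\[
L^{(k)}(\lambda)=2\int_0^1(1-t)\,t^k\,\psi^{(k+2)}(t\lambda)\,dt,
\]
and identically for $L_n$ with $\psi_n$. This proves that $L$ and $L_n$ are infinitely differentiable on $\R$. For $|\lambda|\le C_n$ we have $t\lambda\in[-C_n,C_n]$, so
\[
|L_n^{(k)}(\lambda)-L^{(k)}(\lambda)|\le \sup_{|u|\le C_n}|\psi_n^{(k+2)}(u)-\psi^{(k+2)}(u)|\cdot 2\int_0^1(1-t)t^k\,dt .
\]
The last factor equals $\frac{2}{(k+1)(k+2)}\le 1$, which gives the displayed inequality. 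Part (b) then follows immediately from Corollary~\ref{cor:cgf-unif-conv} applied with $k+2$ in place of $k$.

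\textbf{Step 3: part (a), and the main obstacle.} For fixed $C$, I note that $\psi_n(\lambda)=\tilde\psi_n(\lambda)-\lambda\bar X$, so $\psi_n^{(j)}=\tilde\psi_n^{(j)}$ for all $j\ge2$. Since $I=\R$, Proposition~\ref{prop:feuerverger-uniform-convergence}(b) on $K=[-C,C]$ gives tightness of $\sqrt n\sup_K|\tilde\psi_n^{(k+2)}-\psi^{(k+2)}|$ for each fixed $P$. The obstacle is uniformity over $\mathcal P(\Xi)$, which Feuerverger's fixed-$P$ result does not provide.

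To handle it, I would first bound $\E\sup_{|\lambda|\le C}|\tilde M_n^{(j)}(\lambda)-M^{(j)}(\lambda)|$ for $j\le k+3$ directly. I would use the inequality $\sup_{K}|f|\le|f(0)|+\int_K|f'|$, together with Cauchy--Schwarz, to reduce this to variances of $X^je^{\lambda X}$ for $|\lambda|\le C$. Those variances are bounded by a constant depending only on $(C,j,\Xi)$, using $\E e^{2\lambda X}\le e^{2\lambda^2\Xi}$ and the resulting moment bounds. Markov's inequality then gives an $O_p(n^{-1/2})$ bound uniform over $\mathcal P(\Xi)$.

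Next I transfer this to $\tilde\psi_n^{(k+2)}$. This derivative is a polynomial in $\tilde M_n^{(j)}/\tilde M_n$. By Jensen's inequality, $M(\lambda)\ge e^{\lambda\E X}=1$, and the $M^{(j)}$ are uniformly bounded on $[-C,C]$. So on the event $\sup_K|\tilde M_n-M|\le 1/2$, whose probability tends to one uniformly over $\mathcal P(\Xi)$, the map taking these quantities to $\tilde\psi_n^{(k+2)}$ is Lipschitz with a constant depending only on $(C,k,\Xi)$. This yields uniform $O_p(n^{-1/2})$ control of $\sup_K|\psi_n^{(k+2)}-\psi^{(k+2)}|$, and part (a) follows from the inequality in Step 2.
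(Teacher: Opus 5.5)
Steps 1 and 2 coincide with the paper's proof: the representation $L(\lambda)=2\int_0^1(1-x)\psi''(\lambda x)\,dx$ (with $\psi_n'(0)=0$ supplied by centering), differentiation under the integral, the bound $2\int_0^1(1-x)x^k\,dx\le 1$, and (b) read off from Corollary~\ref{cor:cgf-unif-conv}. Part (a) is where your route genuinely differs.

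The paper does not prove (a) alongside the inequality. It re-proves it later as Proposition~\ref{prop:L_n-uniform-rootn}, in three steps. First, it shows that $\mathcal F_k=\{x\mapsto x^ke^{\lambda x}:|\lambda|\le C_0\}$ is Donsker uniformly over $\mathcal P(\Xi)$, via a bracketing-entropy bound and Theorem 2.8.4 of van der Vaart--Wellner (Propositions~\ref{prop:uniform-donkser} and~\ref{prop:mgf-uniform-donsker}). Second, it corrects for the centering by expanding $\frac{d^k}{d\lambda^k}(e^{-\lambda\bar X}\tilde M_n)$ and using $\bar X=O_p(n^{-1/2})$ (Proposition~\ref{prop:centered-mgf-uniform-donsker}). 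Third, it reruns the ratio argument of Corollary~\ref{cor:cgf-unif-conv}.

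Your identity $\psi_n=\tilde\psi_n-\lambda\bar X$ gives $\psi_n^{(j)}=\tilde\psi_n^{(j)}$ for $j\ge 2$, which makes the centering step unnecessary. Your bound $\E\sup_K|f|\le\E|f(0)|+\int_K\E|f'|$, combined with Cauchy--Schwarz, replaces the empirical-process machinery with an explicit $n^{-1/2}$ bound whose constant depends only on $(C,j,\Xi)$. It also avoids passing from uniform weak convergence to uniform tightness of $\sqrt n\sup_K|\cdot|$, a step the paper uses implicitly.

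The paper's route yields more than the statement needs, namely a uniform functional CLT for the whole process. Conversely, your elementary argument extends almost verbatim to $C_n=(\log n)^\alpha$ with $\alpha<1/2$, because its constants grow like $C_ne^{O(C_n^2)}=n^{o(1)}$. It would therefore give (b) as well, bypassing the truncation argument behind Theorem~\ref{prop:sub-gaussian-convergence-lemma}.

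One small point to tighten. On the event $\sup_K|\tilde M_n-M|\le 1/2$ alone, the map $(\tilde M_n,\dots,\tilde M_n^{(k+2)})\mapsto\tilde\psi_n^{(k+2)}$ is not Lipschitz with a constant depending only on $(C,k,\Xi)$. It is polynomial in the numerators $\tilde M_n^{(j)}$, which that event does not yet control. Intersect also with $\{\sup_K|\tilde M_n^{(j)}-M^{(j)}|\le 1 \text{ for all } j\le k+2\}$. This event has probability tending to one uniformly over $\mathcal P(\Xi)$ by the same first-moment bound, and the rest of your argument goes through unchanged.
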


%Actually, the convergence in Proposition~\ref{prop:L_n-cgf-bound}(a) is also uniform over $\mathcal P(\Xi)$. The proof requires some machinery and is therefore delayed to Section~\ref{sec:minimax-opt}.
\subsection{Convergence of $\hat \xi_n^2$}
\label{sec:estimator-convergence}
Equipped with the above propositions, we are ready to prove the main result of this section. Let us define 
\[
\delta(C) \equiv \delta_P(C) := \sup_{|\lambda| \geq C} L(\lambda; P) - \sup_{|\lambda| \leq C} L(\lambda; P),
\]
and observe that $\delta(C)$ is a decreasing function with $\lim_{C \to \infty} \delta(C) \leq 0$.  

As we will show in Section~\ref{sec:minimax-opt}, the difficulty of this estimation problem is fundamentally related to the tail behavior of $\delta(C)$.
\begin{theorem}
\label{prop:second-consistency}
    Let $P$ be a sub-Gaussian distribution and let $X_1, \ldots, X_n$ be i.i.d samples from $P$. Then 
    \[
    |\hat \xi^2_n - \xi_*^2| \leq \sup_{|\lambda| \leq C_n} |L_n(\lambda) - L(\lambda)| + \delta_P(C_n) \vee 0.
    \]
    Consequently, if $C_n = (\log n)^\alpha$ for any $\alpha \in (0, 1/2)$, then 
    \begin{enumerate}
        \item[(a)] $\hat \xi_n^2 \xrightarrow{p} \xi_*^2$; \item[(b)] If there is $C_0 > 0$  such that $\delta(C_0) \leq 0$, then $\hat \xi_n^2 - \xi_*^2 = O_p(n^{-1/2 + \eps})$ for all $\eps > 0$;
        \item[(c)] If $\delta(C_0) < 0$ then $\hat \xi^2_n - \xi^2_* = O_p(n^{-1/2})$.
    \end{enumerate}
\end{theorem}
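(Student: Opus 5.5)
The plan is to first establish the deterministic inequality and then read off (a)–(c) from Proposition~\ref{prop:L_n-cgf-bound} together with properties of $\delta_P$. Write $A_n := \sup_{|\lambda|\le C_n} L(\lambda)$, so that $\xi_*^2 = \max\{A_n, \sup_{|\lambda|\ge C_n} L(\lambda)\} = A_n + (\delta_P(C_n)\vee 0)$. We split
\[
|\hat\xi_n^2 - \xi_*^2| \le |\hat\xi_n^2 - A_n| + |A_n - \xi_*^2|.
\]
For the first term, the elementary fact $|\sup_K f - \sup_K g| \le \sup_K |f-g|$ on $K=[-C_n,C_n]$ gives $|\hat \xi_n^2 - A_n| \le \sup_{|\lambda|\le C_n}|L_n(\lambda)-L(\lambda)|$. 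The second term equals $\xi_*^2 - A_n = \delta_P(C_n)\vee 0$ by the identity above. This proves the displayed bound.

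For (a), Proposition~\ref{prop:L_n-cgf-bound}(b) with $k=0$ shows that the stochastic term is $O_p(n^{-1/2+\eps})$, hence $o_p(1)$. It remains to show $\delta_P(C_n)\vee 0 \to 0$, i.e.\ $\sup_{|\lambda|\le C} L \to \xi_*^2$ as $C\to\infty$. This holds because $\xi_*^2 = \sup_{\mathbb R} L$ by Proposition~\ref{prop:xi-L-char}: for any $\eta>0$ some $\lambda_\eta$ has $L(\lambda_\eta) > \xi_*^2-\eta$, and once $C_n \ge |\lambda_\eta|$ we get $A_n > \xi_*^2 - \eta$. So the bias vanishes and consistency follows.

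For (b), since $\delta_P$ is decreasing, $\delta_P(C_0)\le 0$ gives $\delta_P(C_n)\vee 0 = 0$ as soon as $C_n \ge C_0$, which holds for all large $n$. The bound then reduces to the stochastic term, which is $O_p(n^{-1/2+\eps})$ by Proposition~\ref{prop:L_n-cgf-bound}(b).

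Part (c) needs more care, and I expect it to be the main obstacle: a sharper argument that avoids the diverging window. If $\delta_P(C_0) = -\delta_0 < 0$, then $\sup_{|\lambda|\ge C_0} L \le \xi_*^2 - \delta_0$, so $\xi_*^2 = \sup_{|\lambda|\le C_0} L$. On the event
\[
E_n := \Bigl\{\sup_{C_0\le|\lambda|\le C_n}|L_n - L| < \delta_0/2\Bigr\},
\]
we have $\sup_{C_0\le |\lambda|\le C_n} L_n < \xi_*^2 - \delta_0/2$. Also, on $E_n$, $\sup_{|\lambda|\le C_0} L_n \ge \xi_*^2 - \sup_{|\lambda|\le C_0}|L_n-L| > \xi_*^2 - \delta_0/2$ as soon as this second deviation is below $\delta_0/2$. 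By Proposition~\ref{prop:L_n-cgf-bound}(b), $P(E_n^c)\to 0$, and by Proposition~\ref{prop:L_n-cgf-bound}(a) the deviation on $[-C_0,C_0]$ is $O_p(n^{-1/2})$, so it too is below $\delta_0/2$ with probability tending to one. On the intersection of these events $\hat\xi_n^2 = \sup_{|\lambda|\le C_0} L_n$, and therefore
\[
|\hat\xi_n^2-\xi_*^2| \le \sup_{|\lambda|\le C_0}|L_n-L| = O_p(n^{-1/2}).
\]
Since the complement of the event has vanishing probability, this yields the $O_p(n^{-1/2})$ rate.
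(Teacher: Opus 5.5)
Your proposal is correct and follows essentially the same route as the paper: the same comparison of suprema on $[-C_n,C_n]$ together with the identity $\xi_*^2-\sup_{|\lambda|\le C_n}L=\delta_P(C_n)\vee 0$, and in (c) the same device of showing that, with probability tending to one, the supremum of $L_n$ is attained in $[-C_0,C_0]$, so that Proposition~\ref{prop:L_n-cgf-bound}(a) gives the $n^{-1/2}$ rate. Your handling of the bias in (a), via $\sup_{|\lambda|\le C}L\uparrow\xi_*^2$, is slightly more careful than the paper's remark that $\delta(C_n)\to 0$, since what is needed is $\delta_P(C_n)\vee 0\to 0$.
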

The proof of Theorem~\ref{prop:second-consistency} reveals that, so long as $\delta(C_0) = 0$ for some $C_0$, an exact rate of $n^{-1/2}$ can be achieved by choosing $C_n \equiv C_0$. For example, if $X \sim N(0, 1)$, then any such $C_0$ would work. However, we do not know $C_0$ a priori, so we cannot make this choice.
\begin{example}
    
We now give some cases when the guarantees of Theorem~\ref{prop:second-consistency} apply, or not. 
\begin{enumerate}
    \item[(a)] Suppose $X$ is bounded almost surely. Then $L$ obeys the inequality $L(\lambda) \leq 2\lambda^{-2}\log \E[e^{\|X\|_\infty \lambda}] = 2\|X\|_\infty \lambda^{-1}$,
        which vanishes as $\lambda \to \infty$. Therefore $\lim_{C \to \infty}\delta(C) = -\xi_*^2$, so Theorem~\ref{prop:second-consistency}(c) applies. If in particular $X \sim \text{Unif}[-1 ,1]$, then $L$ satisfies the hypotheses of Proposition~\ref{prop:ptwise-asymptotic-normality} so $\sqrt n(\hat \xi_n^2 - \xi_*^2) \xrightarrow{d}  N(0, 4/45)$.
        
\item[(b)] If $X \sim N(0, 1)$, then $L(\lambda) \equiv 1$ for all $\lambda \in \R$, so $\delta(C) \equiv 0$. Therefore (c) does not apply, but (b) does.

\item[(c)] If $X \sim \frac12N(0, 1) + \frac12N(0, 2)$, then $\delta(C) > 0$ for all $C > 0$ so only Theorem~\ref{prop:second-consistency}(a) holds.
\end{enumerate}
\end{example}
\begin{figure}[h]
    \centering
    \resizebox{0.45\linewidth}{!}{\input{figures/L_plots.pgf}}
    \caption{\textbf{Plots of $L(\lambda; P)$.} In Example 1(a), $\argmax L$ exists so $\delta$ is eventually negative. In (c), $L$ is increasing away from 0 so that $\delta(C) > 0$ for all $C > 0$.}
    \label{fig:placeholder}
\end{figure}

\subsection{Inference}
\label{sec:inference}
Under even stronger assumptions, we can also derive the limiting distribution of $\hat \xi_n^2$. 
\begin{proposition}
\label{prop:ptwise-asymptotic-normality}
For $\lambda \ne 0$, define   
    \[
    V(\lambda) := \frac{4}{\lambda^4}\left(\frac{M(2\lambda)}{M(\lambda)^2} - 2\lambda\psi'(\lambda)+ \lambda^2\sigma^2 - 1\right).
    \]
    Also, let $V(0) := \E X^4 - \sigma^4$. Suppose that $L$ is uniquely maximized at $\lambda^*$, $L''(\lambda^*) < 0$, $V(\lambda^*) > 0$, and there is $C_0$ such that $\delta(C_0) < 0$. 
    Then, for any $C_n \to \infty$, we have 
    \[
    \sqrt n(\hat \xi_n^2 - \xi_*^2) \xrightarrow{d} N(0, V(\lambda^*)).
    \]
\end{proposition}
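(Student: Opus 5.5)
The plan is to reduce $\hat\xi_n^2$ to a maximization over a fixed compact interval, show that the maximizer of $L_n$ is consistent for $\lambda^*$, and then prove a pointwise CLT for $L_n(\lambda^*)$ by the delta method.

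\textbf{Step 1: reduction to a compact set.} Fix $C_0$ with $\delta(C_0)<0$, so that $\sup_{|\lambda|\ge C_0}L(\lambda) \le \xi_*^2-|\delta(C_0)|$. Since $\lambda^*$ is the unique maximizer, $|\lambda^*|<C_0$. I will show that with probability tending to one, $\sup_{C_0\le|\lambda|\le C_n}L_n(\lambda) < L_n(\lambda^*)$.

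If $C_n\le(\log n)^\alpha$, Proposition~\ref{prop:L_n-cgf-bound}(b) gives this immediately. For an arbitrary $C_n\to\infty$ one needs a one-sided bound on $L_n$ at large $\lambda$, and this is the main obstacle. I would first try to show $\limsup_n\sup_{|\lambda|\ge C_0}L_n(\lambda)\le \sup_{|\lambda|\ge C_0}L(\lambda)+o(1)$ almost surely.

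For large $|\lambda|$ a crude bound is available. With $R_n=\max_i|X_i-\bar X|$, we have $L_n(\lambda)\le 2R_n/|\lambda|$, and $R_n=O_p(\sqrt{\log n})$. This handles $|\lambda|\gtrsim\sqrt{\log n}$ when $\xi_*^2$ is large relative to the constants, but not in general. If this cannot be closed, I would fall back on the growth condition $C_n\le(\log n)^\alpha$, which matches the paper's standing choice.

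Granted the reduction, with probability tending to one
\[
\hat\xi_n^2=\sup_{|\lambda|\le C_0}L_n(\lambda).
\]

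\textbf{Step 2: consistency of the maximizer.} By Proposition~\ref{prop:L_n-cgf-bound}(a), $\sup_{|\lambda|\le C_0}|L_n^{(k)}-L^{(k)}|=O_p(n^{-1/2})$ for $k=0,1,2$. Let $\hat\lambda_n\in\argmax_{|\lambda|\le C_0}L_n$. Uniqueness of $\lambda^*$ together with continuity of $L$ gives, by the argmax theorem, $\hat\lambda_n\to\lambda^*$ in probability.

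Taylor expanding $L_n$ around $\lambda^*$, the curvature condition $L''(\lambda^*)<0$ and the uniform convergence of $L_n''$ yield
\[
L_n(\hat\lambda_n)-L_n(\lambda^*) = O_p\bigl(|L_n'(\lambda^*)|^2\bigr)=O_p(n^{-1}),
\]
because $L_n'(\lambda^*)=L_n'(\lambda^*)-L'(\lambda^*)=O_p(n^{-1/2})$. Therefore
\[
\sqrt n\bigl(\hat\xi_n^2-\xi_*^2\bigr)=\sqrt n\bigl(L_n(\lambda^*)-L(\lambda^*)\bigr)+o_p(1).
\]

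\textbf{Step 3: pointwise CLT.} Take $\lambda^*\neq0$. Write
\[
L_n(\lambda^*)=\frac{2}{\lambda^{*2}}\Bigl[\log\tilde M_n(\lambda^*)-\lambda^*\bar X\Bigr].
\]
The pair $\bigl(\tilde M_n(\lambda^*),\bar X\bigr)$ is a sample mean of $\bigl(e^{\lambda^*X},X\bigr)$, which has finite second moments because $M(2\lambda^*)<\infty$ by sub-Gaussianity. The bivariate CLT applies, and the delta method then gives asymptotic variance
\[
\frac{4}{\lambda^{*4}}\operatorname{Var}\!\Bigl(\frac{e^{\lambda^*X}}{M(\lambda^*)}-\lambda^*X\Bigr)
=\frac{4}{\lambda^{*4}}\Bigl(\frac{M(2\lambda^*)}{M(\lambda^*)^2}-1-\frac{2\lambda^*M'(\lambda^*)}{M(\lambda^*)}+\lambda^{*2}\sigma^2\Bigr),
\]
using $\mathbb{E}X=0$. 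Since $M'/M=\psi'$, this equals $V(\lambda^*)$.

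The case $\lambda^*=0$ is the empirical variance, whose asymptotic variance is $\mathbb{E}X^4-\sigma^4$. The condition $V(\lambda^*)>0$ ensures the limit $N(0,V(\lambda^*))$ is non-degenerate, and Slutsky's theorem combined with Step 2 completes the proof.
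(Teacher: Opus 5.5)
Your Steps 2 and 3 follow the paper's argument. You localize the maximizer at $\lambda^*$, show $L_n(\hat\lambda_n)-L_n(\lambda^*)=O_p(n^{-1})$ by a second-order expansion, and apply the delta method to $(\tilde M_n(\lambda^*),\bar X)$ to obtain $V(\lambda^*)$. Your completing-the-square bound $0\le L_n(\hat\lambda_n)-L_n(\lambda^*)\le L_n'(\lambda^*)^2/(2c)$ holds once $L_n''\le -c$ on a neighborhood of $\lambda^*$ containing $\hat\lambda_n$. It is slightly leaner than the paper's route, which first proves $\hat\lambda_n-\lambda^*=O_p(n^{-1/2})$ in Proposition~\ref{prop:maximizer-root-n}. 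You also treat $\lambda^*=0$, which the paper skips.

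The gap is Step 1. The statement is for every $C_n\to\infty$, and your fallback $C_n\le(\log n)^\alpha$ proves only a special case. The paper's written proof has the same restriction: Proposition~\ref{prop:maximizer-root-n} controls $\sup_{|\lambda|\le C_n}|L_n-L|$ through Proposition~\ref{prop:L_n-cgf-bound}(b). Your bound $L_n\le 2R_n/|\lambda|$ handles $|\lambda|\ge K\sqrt{\log n}$ for $K$ large depending on $\xi_*$. It does not reach the intermediate range $(\log n)^\alpha\lesssim|\lambda|\lesssim\sqrt{\log n}$.

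The missing observation is that beyond $C_0$ you only need an \emph{upper} bound on $L_n$. Upper deviations of the nonnegative average $\tilde M_n(\lambda)$ are controlled by Markov's inequality, because $\E\tilde M_n(\lambda)=M(\lambda)$.

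Concretely, let $\kappa=-\delta(C_0)$, so $M(\lambda)\le e^{\lambda^2(\xi_*^2-\kappa)/2}$ for $|\lambda|\ge C_0$. For $A\ge C_0\vee 1$ put $\lambda_j=A+j$, $j\in\N_0$. Monotonicity of $\lambda\mapsto e^{\lambda x}$ gives $\tilde M_n(\lambda)\le\tilde M_n(\lambda_j)+\tilde M_n(\lambda_{j+1})$ on $[\lambda_j,\lambda_{j+1}]$, and likewise on the negative side. Let $\epsilon_A=2\sum_{j\ge 0}e^{-\kappa(A+j)^2/4}$, which does not depend on $n$. By Markov and a union bound, with probability at least $1-\epsilon_A$ we have $\tilde M_n(\pm\lambda_j)\le e^{\kappa\lambda_j^2/4}M(\pm\lambda_j)$ for all $j$. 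On that event, using $\lambda_{j+1}\le(1+A^{-1})|\lambda|$,
\[
\sup_{|\lambda|\ge A}L_n(\lambda)\le\bigl(1+A^{-1}\bigr)^2\bigl(\xi_*^2-\kappa/2\bigr)+\frac{2\log 2}{A^2}+\frac{2|\bar X|}{A}.
\]
For $A$ large the right side is at most $\xi_*^2-\kappa/3+o_p(1)$, while $L_n(\lambda^*)=\xi_*^2+o_p(1)$. The range $C_0\le|\lambda|\le A$ is covered by uniform convergence on the fixed interval $[-A,A]$ (Proposition~\ref{prop:L_n-cgf-bound}(a)). Hence $\limsup_n\P\bigl(\sup_{C_0\le|\lambda|\le C_n}L_n(\lambda)\ge L_n(\lambda^*)\bigr)\le\epsilon_A$ for every large $A$. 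Letting $A\to\infty$, this limit is $0$ for any $C_n$, even $C_n=\infty$. With this in place of your fallback, the rest of your proof goes through unchanged.
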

The hypothesis that $L$ be uniquely maximized is necessary: if $X \sim N(0, 1)$, then $L(\lambda) \equiv 1$ so that all of $\R$ maximizes $L$. In this case, the maximizers of $L_n$ will be entirely determined by stochastic fluctuations in $L_n$, so the limiting distribution is not Gaussian. We include the $N(0, 1)$ case in Figure~\ref{fig:asymptotic-normality} to explicitly illustrate this behavior. 

According to Proposition~\ref{prop:ptwise-asymptotic-normality}, we may construct asymptotically valid confidence intervals by plugging in empirical estimates of the quantities involved. Let 
\[
\hat V_n(\lambda) := \frac{4}{\lambda^4}\left(\frac{M_n(2\lambda)}{M_n(\lambda)^2} - 2\lambda\psi_n'(\lambda) + \lambda^2\hat\sigma^2_n - 1\right),
\]
where $\hat \sigma_n^2$ is any consistent estimator of $\sigma^2$. Then, if $z_{\alpha/2}$ denotes the $(1 - \alpha/2)$-quantile of the standard normal distribution and $\lambda^*_n$ denotes a maximizer of $L_n$ over $[-C_n, C_n]$, it follows that 
\[
\left[\hat \xi_n^2 -  z_{\alpha/2}\sqrt{\frac{\hat V_n(\lambda^*_n)}{n}}, \hat \xi^2_n + z_{\alpha/2}\sqrt{\frac{\hat V_n(\lambda^*_n)}{n}}\, \right]
\]
is an asymptotic $(1 - \alpha)$-level confidence interval for $\xi_*^2$. 

To verify Proposition~\ref{prop:ptwise-asymptotic-normality}, we generate samples $X_1, \dots, X_n \sim P$ for the following choices of $P$: (a) $\mathrm{Unif} [-1, 1]$,
(b) $\frac{1}{3}\delta_{-2} + \frac{2}{3}\delta_1$, 
(c) $\frac12 \mathrm{Unif}[-3, 3] + \frac12N(0, 1)$, 
(d) $N(0, 1)$.

We set $C_n = (\log n)^{1/4}$, $n = 1000$, and run 600 simulations. For each simulation, we compute the estimated asymptotic variance $\hat V_n(\lambda^*_n)$ and take the average over all simultations and compare it to the sample variance.

\begin{figure}[h]
\centering
    \resizebox{0.6\linewidth}{!}{\input{figures/asymptotic_normality.pgf}}
\caption{\textbf{Distribution of $\sqrt n(\hat \xi_n^2 - \xi_*^2)$ for four choices of $P$.} Proposition~\ref{prop:ptwise-asymptotic-normality} gives asymptotic normality for all cases but $N(0, 1)$.}
    \label{fig:asymptotic-normality}
\end{figure}
In practice, the hypotheses of Theorem~\ref{prop:second-consistency} and Proposition~\ref{prop:ptwise-asymptotic-normality} are difficult to check. One heuristic approach is to plot $L_n$ on $[-C_n, C_n]$, and check if the maximizers are in the interior. An affirmative answer would be consistent with the hypotheses in Theorem~\ref{prop:second-consistency}(c) and Proposition~\ref{prop:ptwise-asymptotic-normality}; on the other hand if $L_n$ is flat or monotone then the guarantees likely do not apply and the confidence interval may not be reliable.

\section{Minimax Risk of Estimating $\xi_*^2$}

In the previous section, we saw that the tail behavior of the truncation gap $\delta_P$ determines the rate at which $\hat \xi^2_n$ converges. In Section~\ref{sec:minimax-lower}, we show that $\delta_P$ governs the difficulty of estimating $\xi_*^2$ itself, regardless of the estimator involved. To do this, we consider three regimes: first, when no uniform assumptions are made on $\delta_P$, we show that the minimax risk is $\Omega(1)$. If a uniform polynomial decay is imposed, then the risk becomes $\Omega(1/\log n)$, and if the $\delta_P$'s eventually become zero or negative, the risk becomes $\Omega(n^{-1/2})$. For the third regime, we show that $\hat \xi_n^2$ matches the $n^{-1/2}$-rate, making it minimax optimal.

\subsection{Lower bounds on the minimax risk}
\label{sec:minimax-lower}
This subsection characterizes the difficulty of the estimation problem in terms of an $r$ function that determines the uniform rate of decay for the $\delta_P$'s. Intuitively, slower rates of decay on $\delta_P$ correspond to harder estimation problems because the supremum of $L$ is more difficult to approximate, so any estimator must search for maximizers for an increasingly large range of $\lambda$.

To make this precise, let us write
\[
\mathcal P_c(\Xi) := \{P \in \mathcal P(\Xi): \text{$P$ is compactly supported}\},
\]
and given any $C_0 > 0$ and decreasing function $r: (0, \infty) \to \R$, define the classes
\[
\mathcal P(\Xi, C_0, r) := \{P \in \mathcal P(\Xi): \delta_P(C) \leq r(C) \text{ for all $C \geq C_0$}\}
\]
and 
\[
\mathcal P_c(\Xi, C_0, r) := \{P \in \mathcal P_c(\Xi): \delta_P(C) \leq r(C) \text{ for all $C \geq C_0$}\}.
\]

The following proposition characterizes membership in $\mathcal P_c(\Xi, C_0, r)$.
\begin{proposition}
\label{prop:unif-supp-delta-inclusion}
    Let $P \in \mathcal P(\Xi)$. If $P$ is supported on $[-1, 1]$, then $P \in \mathcal P(\Xi, 0, r_0)$ where $r_{0}(t) = 2/t$. If in addition $\sigma^2(P) \geq \frac32\delta_0$, then $P \in \mathcal P(\Xi, 8\delta_0^{-1}, r_-)$ where $r_-(t) = - \delta_0$.
\end{proposition}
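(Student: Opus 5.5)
The plan is to bound the two suprema in $\delta_P(C)$ separately. For large $|\lambda|$ I will use the crude moment generating function bound available for bounded variables, already noted in Example~1(a). For small $|\lambda|$ I will use the trivial lower bound $L(0)=\sigma^2$.

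\emph{Upper bound on the tail supremum.} Since $X\in[-1,1]$ almost surely, $e^{\lambda X}\le e^{|\lambda|}$, hence $\E e^{\lambda X}\le e^{|\lambda|}$. Therefore, for every $\lambda\neq 0$,
\[
L(\lambda)=\frac{2}{\lambda^2}\log \E e^{\lambda X}\le \frac{2|\lambda|}{\lambda^2}=\frac{2}{|\lambda|}.
\]
It follows that $\sup_{|\lambda|\ge C}L(\lambda)\le 2/C$ for every $C>0$.

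\emph{Lower bound on the truncated supremum.} For every $C\ge 0$ we have $0\in[-C,C]$. By the continuity in Proposition~\ref{prop:xi-L-char}, $L(0)=\sigma^2(P)$, so $\sup_{|\lambda|\le C}L(\lambda)\ge \sigma^2\ge 0$.

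\emph{First claim.} Combining the two bounds gives $\delta_P(C)\le 2/C-\sigma^2\le 2/C=r_0(C)$ for all $C>0$. This is exactly the condition for $P\in\mathcal P(\Xi,0,r_0)$; the value $C=0$ is irrelevant since $r_0$ is defined on $(0,\infty)$.

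\emph{Second claim.} Assume in addition $\sigma^2\ge \tfrac32\delta_0$, with $\delta_0>0$. For $C\ge 8\delta_0^{-1}$ we have $2/C\le \delta_0/4$. Hence
\[
\delta_P(C)\le \frac{\delta_0}{4}-\frac{3\delta_0}{2}=-\frac{5\delta_0}{4}\le-\delta_0=r_-(C).
\]
This gives $P\in\mathcal P(\Xi,8\delta_0^{-1},r_-)$.

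There is no real obstacle in this argument. The only points needing care are that the bound $L(\lambda)\le 2/|\lambda|$ uses just $|X|\le 1$ and not the mean-zero property, and that including $\lambda=0$ in the truncated supremum is justified by the continuity of $L$ from Proposition~\ref{prop:xi-L-char}.
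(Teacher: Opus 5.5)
Your proof is correct and follows the same route as the paper: you bound $\sup_{|\lambda|\ge C}L$ by $2/C$ using $\E e^{\lambda X}\le e^{|\lambda|}$, and you bound $\sup_{|\lambda|\le C}L$ below by $L(0)=\sigma^2$. Your second claim is slightly cleaner than the paper's on two counts: you keep the sharper $2/|\lambda|$ bound where the paper loosens it to $4/|\lambda|$, and you verify $\delta_P(C)\le-\delta_0$ for every $C\ge 8\delta_0^{-1}$ directly, whereas the paper checks only $C=8\delta_0^{-1}$ and implicitly relies on $\delta_P$ being decreasing.
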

Let us now define, for any $\gamma \in [0, 1/2]$, 
\[
 r_\gamma(t) = 2^{\frac{1 - 2\gamma}{1 - \gamma}} t^{\frac{2\gamma - 1}{1 - \gamma}}, \quad t > 0.
\]
There are two values of $\gamma$ that are particularly important. When $\gamma = 1/2$, $r_\gamma(t) \equiv 1$, so that no rate of decay is imposed on $\delta_P$. Meanwhile, for $\gamma = 0$, we have $r_\gamma(t) = 2/t$, the rate of decay that corresponds to random variables supported on $[-1, 1]$, as in Proposition~\ref{prop:unif-supp-delta-inclusion}.

Let $0 \leq \gamma_1 \leq \gamma_2 \leq 1/2$. It then follows that $r_{\gamma_1}(t) \leq r_{\gamma_2}(t)$ for any $t \geq 2$, and therefore we have the inclusion 
\[
\mathcal P(\Xi, 2, r_{\gamma_1}) \subset \mathcal P(\Xi, 2, r_{\gamma_2}).
\]
Thus as $\gamma$ increases, $\mathcal P(\Xi, 2, r_\gamma)$ grows larger and estimation becomes harder. The next proposition quantifies this.

\begin{proposition}
\label{prop:minimax-rates}
There exists a universal constant $c_0> 0$ such that: 
    \begin{enumerate}
        \item[(a)] For any $\gamma \in [0, 1/2]$, we have 
        \[
        \inf_{\hat \xi^2} \sup_{P \in \mathcal P_c(\Xi, 0, r_\gamma)} \E_P|\hat \xi^2(X_1, \dots, X_n) - \xi_*^2(P)| \geq \frac{c_0 \cdot \Xi }{(\log 2n)^{1 - 2\gamma}}
        \]
        and in particular, 
        \[
        \inf_{\hat \xi^2} \sup_{P \in \mathcal P_c(\Xi)} \E_P|\hat \xi^2(X_1, \dots, X_n) - \xi_*^2(P)| \geq c_0 \cdot \Xi.
        \]
        \item[(b)] Let $\delta_0 \in [0, 1/2]$ and set $r_{-}(t) = -\delta_0$; let $C_0 \geq 16$. Then 
        \[
        \inf_{\hat \xi^2} \sup_{P \in \mathcal P_c(\Xi,C_0, r_-)} \E_P|\hat \xi^2(X_1, \dots, X_n) - \xi_*^2(P)| \geq \frac{c_0 \cdot \Xi}{\sqrt n}.
        \]
    \end{enumerate}
\end{proposition}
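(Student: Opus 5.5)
Both parts are two-point (Le Cam) lower bounds. I would construct $P_0,P_1$ in the relevant class with $\mathrm{TV}(P_0^{\otimes n},P_1^{\otimes n})\le 1/2$, or equivalently $n\,\mathrm{KL}(P_0\|P_1)\lesssim 1$ or $n\,H^2(P_0,P_1)\lesssim 1$, and with $|\xi_*^2(P_0)-\xi_*^2(P_1)|\ge 2s$. Le Cam's inequality then gives minimax risk $\ge \tfrac{s}{2}(1-\mathrm{TV})\gtrsim s$. By scaling $X\mapsto cX$, which multiplies $L(\lambda)$ by $c^2$, $\xi_*^2$ by $c^2$, and $\delta_P$ by $c^2$ while rescaling its argument, it suffices to take $\Xi$ of order one and then rescale, checking that the class constraints are preserved. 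This works for $r_\gamma$ because $r_\gamma$ is a power function.

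\textbf{Part (b).} Take a fixed compactly supported base distribution $P_0$ on $[-1,1]$, for instance a symmetric three-point or $\mathrm{Unif}[-1,1]$ law with variance at least $\tfrac32\delta_0$ and $\xi_*^2(P_0)\le \Xi/2$. Let $P_1$ be a mean-zero perturbation of $P_0$, for example moving mass of size $h\asymp n^{-1/2}$ between atoms while keeping the mean zero. Then $\xi_*^2(P_1)-\xi_*^2(P_0)\asymp h$: the variance (and hence $L$ near its maximizer) shifts linearly in $h$, and $\xi_*^2=\sup L$ moves by the same order, since $\sup_\lambda L(\lambda;P)$ is monotone under variance-increasing perturbations, or by an envelope argument. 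We also have $\chi^2(P_1\|P_0)\asymp h^2\asymp 1/n$. Proposition~\ref{prop:unif-supp-delta-inclusion} places both laws in $\mathcal P_c(\Xi,8\delta_0^{-1},r_-)\subset\mathcal P_c(\Xi,C_0,r_-)$ for $C_0\ge 16$ (the threshold $8\delta_0^{-1}$ needs $\delta_0\ge 1/2$; otherwise argue with the explicit $L\le 2/\lambda$ bound directly). This yields the $n^{-1/2}$ bound.

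\textbf{Part (a).} Here $L$ is flat or slowly decaying, so the supremum can be hidden at large $\lambda$, driven by a rare atom far out. Let $P_0$ be a symmetric base law on $[-1,1]$, e.g. Rademacher, for which $L(\lambda)=\tfrac{2}{\lambda^2}\log\cosh\lambda\le 1$. Let $P_1=(1-p)P_0'+p\,\delta_{a}$, with a compensating shift so that the mean is zero, where $a$ is large and $p$ is tiny. At $\lambda\approx$ some $\lambda_1$ we get $\log \E e^{\lambda X}\ge \log(p e^{\lambda a})=\lambda a-\log(1/p)$, so
\[
L(\lambda_1)\gtrsim \frac{2(\lambda_1 a-\log(1/p))}{\lambda_1^2}.
\]
The largest value is $\asymp a^2/\log(1/p)$, attained at $\lambda_1\asymp \log(1/p)/a$. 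Choosing $p=1/(4n)$ keeps $\mathrm{TV}(P_0^{\otimes n},P_1^{\otimes n})\le np\le 1/4$, while $\xi_*^2(P_1)-\xi_*^2(P_0)\gtrsim a^2/\log n$. To stay in $\mathcal P_c(\Xi,0,r_\gamma)$, I would rescale so that the support is $[-A,A]$. The bump in $L$ sits at $\lambda_1\asymp \log(n)/a$ with excess height $\asymp a^2/\log n$, and the constraint $\delta(\lambda)\le r_\gamma(\lambda)\asymp \lambda^{-(1-2\gamma)/(1-\gamma)}$ caps the excess height at $r_\gamma(\lambda_1)$. Optimizing $a$ (and the overall scale, so that $\xi_*^2\le \Xi$) under this cap should give excess $\asymp \Xi(\log n)^{-(1-2\gamma)}$. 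The exponent of $r_\gamma$ is presumably reverse-engineered exactly so that this balance works out. For $\gamma=1/2$ no constraint binds and the gap is of order $\Xi$; the second display follows since $\mathcal P_c(\Xi,0,r_{1/2})\subseteq\mathcal P_c(\Xi)$, because $r_{1/2}\equiv1$ and the construction has $\delta\le 1$ after scaling by $\Xi$.

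\textbf{Main obstacle.} The main difficulty is verifying $\delta_{P_1}(C)\le r_\gamma(C)$ for \emph{all} $C>0$ in part (a). This requires two-sided control of $L(\cdot;P_1)$ across all $\lambda$: near $0$ the mixture should behave like $P_0$; near $\lambda_1$ it is dominated by the atom; for large $\lambda$ we have $L\le 2a/\lambda$ as in Proposition~\ref{prop:unif-supp-delta-inclusion}. I would first try to bound $\log\E e^{\lambda X}\le \log(\cosh\lambda+p e^{\lambda a})$ and compute $\sup_{|\lambda|\ge C}L-\sup_{|\lambda|\le C}L$ piecewise. The mean-zero correction and the $\log 2n$ (rather than $\log n$) normalization will need care.
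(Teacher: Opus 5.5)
Your mechanism is the paper's: a Le Cam two-point argument with a rare far atom of mass $\asymp 1/n$. That atom creates a bump in $L$ of height $\asymp a^2/\log n$ at $\lambda\asymp \log n/a$, and $r_\gamma$ caps the height. Part (b) likewise matches the paper: a symmetric three-point family perturbed by $\asymp n^{-1/2}$. Part (a), however, has two genuine gaps.

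\textbf{The compensating shift breaks the distance bound.} The shift destroys $\mathrm{TV}\le np$. For the Rademacher base you propose, the shifted atoms $\pm1-\epsilon$ are disjoint from $\{\pm1\}$, so $\mathrm{TV}(P_0,P_1)=1$ and Le Cam gives nothing. For a $\mathrm{Unif}[-1,1]$ base, the shift still costs $H^2\asymp pa$ per observation, so $nH^2\asymp a$ is not small. The paper keeps the mean zero by symmetry instead: it compares $P_0=\delta_0$ with $P_1=(1-\frac1n)\delta_0+\frac1{2n}(\delta_a+\delta_{-a})$, for which $H^2(P_0,P_1)=1-\sqrt{1-1/n}$. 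The choice $P_0=\delta_0$ also matters. With a Rademacher base, $\xi_*^2(P_0)=1$ is a floor the bump must clear, which forces $a\gtrsim\sqrt{\log n}$ and then an overall shrinkage that, once optimized, simply makes the Rademacher part negligible. With $\delta_0$, all of $\xi_*^2(P_1)$ is the separation.

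\textbf{The deferred step is the substance of the proof.} What you leave as ``should give'' requires a sharp \emph{upper} bound on $\xi_*^2(P_1)$, which you do not supply. Proposition~\ref{prop:three-point-computation} proves $\frac{a^2}{2\log 2n}\le\xi_*^2(P_1)\le\frac{a^2}{\log n}$.
\begin{itemize}
\item The lower bound is your bump computation, evaluated at $\lambda=2\log(2n)/a$.
\item The upper bound is the MGF inequality $1+\frac1n(\cosh(a\lambda)-1)\le\exp\bigl(\frac{a^2\lambda^2}{2\log n}\bigr)$. It is handled separately for $|a\lambda|\le\sqrt{2\log n}$ (via $\cosh x\le e^{x^2/2}$ and monotonicity of $(e^s-1)/s$) and for $|a\lambda|\ge\sqrt{2\log n}$ (via AM--GM).
\end{itemize}
With this bound, no piecewise two-sided analysis of $L$ is needed:
\begin{itemize}
\item Since $L\ge0$, $\delta_{P_1}(C)\le\xi_*^2(P_1)\le a^2/\log n$ for every $C$.
\item Since $|X|\le a$, $\delta_{P_1}(C)\le 2a/C$.
\item Take $a=(\log n)^\gamma$ and $C^*=2(\log n)^{1-\gamma}$, so that $r_\gamma(C^*)=(\log n)^{2\gamma-1}=a^2/\log n$.
\item For $C\le C^*$, the first bound suffices because $r_\gamma$ is decreasing.
\item For $C\ge C^*$, the second bound suffices because $t\,r_\gamma(t)$ is nondecreasing with $C^*r_\gamma(C^*)=2a$.
\end{itemize}
The same upper bound gives $\xi_*^2(P_1)\le 1$ for $n\ge3$, completing membership in $\mathcal P_c(1,0,r_\gamma)$.

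\textbf{A smaller gap in (b).} ``$\sup_\lambda L$ is monotone under variance-increasing perturbations'' is not a valid general principle, and monotonicity alone would not give the linear rate. Two arguments do work:
\begin{itemize}
\item Use strict sub-Gaussianity of the base, $\xi_*^2(P_q)=\sigma^2(P_q)=q$ for $q\in[1/3,1)$ (the cited three-point result).
\item Evaluate at a maximizer $\lambda_0^*$ of $L(\cdot;P_{q_0})$. Then $\xi_*^2(P_{q_1})-\xi_*^2(P_{q_0})\ge L(\lambda_0^*;P_{q_1})-L(\lambda_0^*;P_{q_0})\gtrsim q_1-q_0$, since $M(\lambda;P_q)=1+q(\cosh\lambda-1)$ and $|\lambda_0^*|\le 2/q_0$.
\end{itemize}

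\textbf{A caveat on scaling.} The map $X\mapsto sX$ sends the constraint $\delta\le r_\gamma$ to $\delta\le s^{1/(1-\gamma)}r_\gamma$. So your claim that the power form of $r_\gamma$ makes the reduction to $\Xi=1$ harmless holds only for $s\le 1$. The paper's rescaling has the same limitation.
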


\subsection{Upper bounds on the minimax risk via $\hat \xi_n^2$}
\label{sec:minimax-opt}
In Theorem~\ref{prop:second-consistency}(a), we proved the pointwise consistency of $\hat \xi_n^2$. If the tail of $\delta_P$ can be uniformly controlled, then we may also get uniform consistency.
\begin{proposition}
\label{prop:P(Xi,K)-unif-consistency}
    Set $C_n = (\log n)^{\alpha}$ for any $\alpha \in (0, 1/2)$. For any $C_0 > 0$ and decreasing function $r: (0, \infty) \to \R$ with $\lim_{t \to \infty} r(t) = 0$, we have $|\hat \xi_n^2 - \xi_*^2(P)|\leq  r(C_n) + O_p(n^{-1/2 + \eps}) = o_p(1)$
    uniformly over $P \in \mathcal P(\Xi,C_0, r)$.
\end{proposition}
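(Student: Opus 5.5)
The plan is to combine the deterministic decomposition from Theorem~\ref{prop:second-consistency} with the uniform stochastic control in Proposition~\ref{prop:L_n-cgf-bound}(b). Theorem~\ref{prop:second-consistency} states, for every sub-Gaussian $P$ and every realization of the sample,
\[
|\hat \xi_n^2 - \xi_*^2(P)| \leq \sup_{|\lambda| \leq C_n}|L_n(\lambda) - L(\lambda;P)| + \delta_P(C_n)\vee 0 .
\]
This inequality is purely deterministic. Roughly, $\sup_{|\lambda|\le C_n}L_n$ differs from $\sup_{|\lambda|\le C_n}L$ by at most the uniform deviation, and $\xi_*^2 - \sup_{|\lambda|\le C_n}L = \delta_P(C_n)\vee 0$. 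So it holds simultaneously for all $P \in \mathcal P(\Xi, C_0, r)$.

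First, I handle the bias term uniformly. Since $C_n = (\log n)^\alpha \to \infty$, we have $C_n \geq C_0$ for all $n \geq n_0$, where $n_0 = \lceil \exp(C_0^{1/\alpha})\rceil$ depends only on $C_0$ and $\alpha$, not on $P$. For such $n$, membership in $\mathcal P(\Xi,C_0,r)$ gives $\delta_P(C_n) \leq r(C_n)$.

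The positive part needs a short remark. Because $r$ is decreasing with limit $0$, it satisfies $r(t)\geq 0$ for all $t$. Hence $\delta_P(C_n)\vee 0 \leq r(C_n)\vee 0 = r(C_n)$, and this bias bound is deterministic and uniform in $P$.

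Second, the fluctuation term. Proposition~\ref{prop:L_n-cgf-bound}(b) with $k=0$ gives $\sup_{|\lambda|\le C_n}|L_n - L| = O_p(n^{-1/2+\eps})$ uniformly over $P\in\mathcal P(\Xi)$, and $\mathcal P(\Xi,C_0,r)\subset\mathcal P(\Xi)$. Concretely, for every $\eta>0$ there exist $K_\eta$ and $N_\eta$ such that, for all $n \ge N_\eta$,
\[
\sup_{P\in\mathcal P(\Xi,C_0,r)}\P_P\bigl(\sup_{|\lambda|\le C_n}|L_n-L|>K_\eta n^{-1/2+\eps}\bigr)\le \eta .
\]
Adding the two bounds for $n\ge \max(n_0,N_\eta)$ gives the claimed inequality $|\hat\xi_n^2-\xi_*^2(P)|\le r(C_n)+O_p(n^{-1/2+\eps})$ uniformly.

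Finally, $r(C_n)\to 0$ deterministically and $n^{-1/2+\eps}\to 0$ for $\eps<1/2$. The sum is therefore $o_p(1)$ uniformly: for any $\tau>0$, pick $n$ large enough that $r(C_n)<\tau/2$ and $K_\eta n^{-1/2+\eps}<\tau/2$.

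I expect no real obstacle, since all the work sits in Theorem~\ref{prop:sub-gaussian-convergence-lemma} and its corollaries. The only points to check are these:
\begin{itemize}
\item the threshold $n_0$ is independent of $P$;
\item the sign of $r$, so that the $\vee 0$ is absorbed;
\item that the uniform $O_p$ statement of Proposition~\ref{prop:L_n-cgf-bound}(b) is used in its uniform-in-$P$ form rather than pointwise.
\end{itemize}
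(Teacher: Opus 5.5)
Your proposal is correct and follows the paper's proof. It uses the same deterministic decomposition through $\tilde\xi_n^2 = \sup_{|\lambda|\le C_n} L(\lambda)$ from Theorem~\ref{prop:second-consistency}, controls the stochastic term uniformly over $\mathcal P(\Xi)$ via Proposition~\ref{prop:L_n-cgf-bound}(b), and bounds the bias by $\delta_P(C_n)\vee 0 \le r(C_n)$. Your two extra points — that $n_0$ with $C_n \ge C_0$ does not depend on $P$, and that a decreasing $r$ with limit $0$ is nonnegative so the $\vee 0$ is absorbed — are details the paper leaves implicit.
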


By Proposition~\ref{prop:P(Xi,K)-unif-consistency}, we have that $\sup_{P \in \mathcal{P}(\Xi, 0, r_{\gamma})} \mathbb{E}_P |\hat{\xi}^2_n - \xi^2_*(P)| \leq O( (\log n)^{\frac{\alpha(2\gamma - 1)}{1 - \gamma}})$ when $C_n$ is chosen as $(\log n)^\alpha$ for $\alpha \in (0, 1/2)$ and $\gamma \in [0, 1/2]$. This does not match the lower bound of $(\log 2n)^{2 \gamma - 1}$ even in the limit $\alpha \rightarrow 1/2$. It is an open question whether to improve the lower bound or the upper bound in order to close the gap.

The remainder of this section works towards a proof that $\hat \xi_n^2$ achieves the rate in Proposition~\ref{prop:minimax-rates}(c), uniformly over $\mathcal P(\Xi, C_0, r_-)$. First, we show that the maximizers of $L_n$ and $L$ lie inside a compact interval $[-C_0, C_0]$ with probability tending to 1, uniformly over $\mathcal P(\Xi, C_0, r_-)$.
\begin{proposition}
\label{prop:maximizer-uniform-bound}
    Set $C_n = (\log n)^\alpha$ for any $\alpha \in (0, 1/2)$. Let $\Lambda^*_n$ denote the maximizers of $L_n$ in $[-C_n, C_n]$ and let $\Lambda^*(P)$ denote the maximizers of $L$. Then   
\[
    \limsup_{n\to\infty} \sup_{P \in \mathcal P(\Xi, C_0, r_-)} \P\left(\Lambda^*_n \cup \Lambda^*(P) \not\subset [-C_0, C_0]\right) = 0.
    \]
\end{proposition}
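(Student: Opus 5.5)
The argument has two parts. The population maximizers are handled deterministically by the definition of $\mathcal P(\Xi, C_0, r_-)$. The empirical maximizers are then handled by combining the uniform convergence of Proposition~\ref{prop:L_n-cgf-bound}(b) with the gap $\delta_0$.

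\textbf{Population maximizers.} Fix $P \in \mathcal P(\Xi, C_0, r_-)$. Then $\delta_P(C_0) \le -\delta_0 < 0$, which says
\[
\sup_{|\lambda|\ge C_0} L(\lambda) \le \sup_{|\lambda|\le C_0} L(\lambda) - \delta_0 .
\]
Since $L$ is continuous on the compact set $[-C_0,C_0]$ (Proposition~\ref{prop:xi-L-char}), the right-hand supremum is attained and equals $\xi_*^2(P)$. Any point with $|\lambda|\ge C_0$ then has $L(\lambda)\le \xi_*^2-\delta_0<\xi_*^2$, so $\Lambda^*(P)\subset(-C_0,C_0)$, deterministically and for every $P$ in the class. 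This uses $\delta_0>0$; if $\delta_0=0$ is allowed, the claim is understood with $\delta_0>0$.

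\textbf{Empirical maximizers.} Take $n$ large enough that $C_n > C_0$. This threshold does not depend on $P$, since $C_0$ is fixed. Set
\[
\Delta_n := \sup_{|\lambda|\le C_n}|L_n(\lambda)-L(\lambda)| .
\]
For $C_0 \le |\lambda| \le C_n$ we have
\[
L_n(\lambda) \le L(\lambda)+\Delta_n \le \xi_*^2-\delta_0+\Delta_n ,
\]
while
\[
\sup_{|\lambda|\le C_0}L_n \ge \xi_*^2-\Delta_n ,
\]
evaluating at a population maximizer in $[-C_0,C_0]$. Hence on the event $\{2\Delta_n<\delta_0\}$ every maximizer of $L_n$ over $[-C_n,C_n]$ lies in $(-C_0,C_0)$. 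This uses continuity of $L_n$ (Proposition~\ref{prop:L_n-cgf-bound}), so the supremum over the compact set is attained and $\Lambda_n^*$ is nonempty.

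Consequently
\[
\sup_{P\in\mathcal P(\Xi,C_0,r_-)} \P\left(\Lambda_n^*\cup\Lambda^*(P)\not\subset[-C_0,C_0]\right) \le \sup_{P\in\mathcal P(\Xi)}\P(\Delta_n\ge \delta_0/2).
\]
By Proposition~\ref{prop:L_n-cgf-bound}(b) with $k=0$, $\Delta_n=O_p(n^{-1/2+\eps})$ uniformly over $\mathcal P(\Xi)\supset\mathcal P(\Xi,C_0,r_-)$. In particular $\Delta_n=o_p(1)$ uniformly, so the right-hand side tends to $0$.

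\textbf{Main obstacle.} The only delicate step is reading "uniformly over $P\in\mathcal P(\Xi)$" in Proposition~\ref{prop:L_n-cgf-bound}(b) as uniform tightness. Concretely, for each $\eta>0$ there should be an $M$ such that
\[
\sup_P \P\bigl(\Delta_n > M n^{-1/2+\eps}\bigr)<\eta
\]
for all large $n$. Once this holds, $Mn^{-1/2+\eps}<\delta_0/2$ eventually, and the limsup is at most $\eta$ for every $\eta>0$. If that uniformity were only pointwise, I would go back to the proof of Theorem~\ref{prop:sub-gaussian-convergence-lemma}. There the MGF and its derivatives on $[-2C_n,2C_n]$ are controlled by constants depending only on $\Xi$, via $M(\lambda)\le e^{\lambda^2\Xi/2}$, and this gives the needed uniform bound.
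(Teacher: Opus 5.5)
Your proof is correct and follows essentially the same route as the paper: the population maximizers are confined to $[-C_0,C_0]$ deterministically by $\delta_P(C_0)\le-\delta_0$, and the empirical ones by comparing $\xi_*^2-\Delta_n$ with $\xi_*^2-\delta_0+\Delta_n$ on an event whose probability is controlled uniformly over $\mathcal P(\Xi)$ via Proposition~\ref{prop:L_n-cgf-bound}(b). The only difference is that the paper additionally invokes part (a) on $[-C_0,C_0]$ to get the lower bound $\sup_{|\lambda|\le C_0}L_n>\xi_*^2-\delta_0/2$, which your single event $\{2\Delta_n<\delta_0\}$ already delivers once $C_n\ge C_0$, so your version is slightly more economical.
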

Proposition~\ref{prop:maximizer-uniform-bound} says that, with high probability, we can identify $L_n$ with its restriction to $[-C_0, C_0]$, independent of $P \in \mathcal P(\Xi, C_0, r_-)$, and similarly for $L$.

We shall use this to gain control over empirical moment generating function. To that end, define
\[
\mathcal F_k = \{x \mapsto x^ke^{\lambda x}: \lambda \in [-C_0, C_0]\} \subset \R^\R.
\]
Informally, the next proposition gives an empirical process central limit theorem for $\mathcal F_k$, and guarantees that the convergence is uniform over $\mathcal P(\Xi)$. Precise definitions are given in Section~\ref{sec:uniform-donsker}. 
\begin{proposition}
\label{prop:mgf-uniform-donsker}
    $\mathcal F_k$ is Donsker and pre-Gaussian, both uniformly over $\mathcal P(\Xi)$. 
\end{proposition}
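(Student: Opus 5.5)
We need to show that $\mathcal F_k=\{x\mapsto x^k e^{\lambda x}:\lambda\in[-C_0,C_0]\}$ is Donsker and pre-Gaussian uniformly over $\mathcal P(\Xi)$. We will use the standard sufficient condition for uniform Donsker classes (van der Vaart--Wellner, Theorem 2.8.3). It requires three things: a finite uniform entropy integral for $\mathcal F_k$ with an envelope $F$; uniform square-integrability of the envelope, $\lim_{M\to\infty}\sup_{P}\E_P F^2\1\{F>M\}=0$; and pre-Gaussianity, which will follow from the same entropy bound together with the uniform second-moment bound.

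\textbf{Envelope and moments.} Take $F(x)=|x|^k(e^{C_0x}+e^{-C_0x})$, which dominates every element of $\mathcal F_k$. The sub-Gaussian bound $\E_P e^{\lambda X}\le e^{\lambda^2\Xi/2}$ holds for all $P\in\mathcal P(\Xi)$. Combined with $|x|^{2k}\le c_k e^{|x|}$, it gives
\[
\sup_{P\in\mathcal P(\Xi)}\E_P F^{2+\eta}(X)\le \sup_{P\in\mathcal P(\Xi)} c\,\E_P e^{(2+\eta)(C_0+1)|X|}<\infty .
\]
This uniform $(2+\eta)$-moment bound yields the uniform Lindeberg condition on the envelope, via $\E F^2\1\{F>M\}\le M^{-\eta}\E F^{2+\eta}$.

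\textbf{Entropy.} The map $\lambda\mapsto x^ke^{\lambda x}$ is Lipschitz in $\lambda$ with
\[
|f_\lambda(x)-f_{\lambda'}(x)|\le |\lambda-\lambda'|\,|x|^{k+1}(e^{C_0x}+e^{-C_0x})=:|\lambda-\lambda'|\,G(x).
\]
By the bracketing bound for parametric Lipschitz classes (vdV--W Theorem 2.7.11), for every $Q$ we have $N(\epsilon\|G\|_{Q,2},\mathcal F_k,L_2(Q))\lesssim C_0/\epsilon$. To state this relative to a single envelope, replace $F$ by $\bar F=F+G$, which still has uniformly bounded $(2+\eta)$-moments. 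Then $\sup_Q N(\epsilon\|\bar F\|_{Q,2},\mathcal F_k,L_2(Q))\lesssim 1/\epsilon$, so the uniform entropy integral $\int_0^1\sqrt{\log(1/\epsilon)}\,d\epsilon$ is finite. Alternatively, $\mathcal F_k$ is VC-type because it is indexed by one real parameter with monotone structure, but the Lipschitz route is cleaner.

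\textbf{Pre-Gaussian uniformly.} We also need $\sup_P\E\|G_P\|_{\mathcal F_k}<\infty$ and uniform asymptotic equicontinuity of the $P$-Brownian bridges $G_P$. By Dudley's entropy bound in the $\rho_P$ metric, $\rho_P(f_\lambda,f_{\lambda'})\le|\lambda-\lambda'|\,\|G\|_{P,2}$. Since $\sup_P\|G\|_{P,2}<\infty$, the covering numbers of $([-C_0,C_0],\rho_P)$ are bounded by $c/\epsilon$ uniformly in $P$. Dudley's integral then gives both the uniform bound on the sup and the uniform modulus $\E\sup_{\rho_P(f,g)<\delta}|G_P f-G_P g|\lesssim\int_0^\delta\sqrt{\log(1/\epsilon)}\,d\epsilon\to0$ uniformly in $P$.

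The main obstacle is matching the precise definitions of the uniform Donsker property that will be given in Section~\ref{sec:uniform-donsker}. In particular, Theorem 2.8.3 requires measurability conditions and a possibly $P$-dependent semimetric. Measurability should be handled by pointwise separability, using rational $\lambda$ and continuity in $\lambda$.
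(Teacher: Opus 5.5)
Your argument is correct, but it checks a different sufficient condition from the one the paper uses.

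\textbf{What the paper does.} The paper applies the bracketing version of the uniform Donsker theorem (van der Vaart--Wellner Theorem 2.8.4, restated as its auxiliary proposition). It verifies the uniform Lindeberg condition for the envelope $|x|^k e^{C|x|}$ via generalized H\"older plus Markov; your $(2+\eta)$-moment bound is the same idea. It then builds explicit brackets $x^k e^{\pi x} \pm \delta |x|^{k+1}e^{C|x|}$ around a $\delta$-net of $[-C,C]$. This is the bracketing form of exactly your Lipschitz-in-$\lambda$ bound, with $D=\sup_{P}\bigl\| |x|^{k+1}e^{C|x|}\bigr\|_{P,2}$ controlling the widths.

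\textbf{What your route changes.} You use the uniform-entropy version (Theorem 2.8.3) instead. Its entropy condition is a supremum over all finitely discrete $Q$, so your entropy bound is distribution-free, and $\mathcal P(\Xi)$ enters only through the envelope moments. The cost is the measurability hypotheses on $\mathcal F_{\delta,P}$ and $\mathcal F^2_\infty$, which bracketing avoids. Your pointwise-separability argument over rational $\lambda$ handles these; the one extra point is that $\rho_P(f_\lambda-f_{\lambda'})$ is continuous in $(\lambda,\lambda')$ by dominated convergence, so the constraint $\rho_P<\delta$ survives the rational approximation.

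\textbf{Your enlarged envelope.} A real advantage of your write-up is the envelope $\bar F=F+G$. Since $\bar F\ge G$, it gives $N(\epsilon\|\bar F\|_{Q,2},\mathcal F_k,L_2(Q))\le N(\epsilon\|G\|_{Q,2},\mathcal F_k,L_2(Q))\lesssim 1/\epsilon$, which is exactly the scale-invariant form the theorem asks for.

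The paper's count $\lceil 4CD/\epsilon\rceil$ is for brackets of absolute width $\epsilon$. The condition it cites, however, is phrased in terms of $N_{[\,]}(\epsilon\|F\|_{P,2},\mathcal F_k, L_2(P))$. Passing from one to the other needs a uniform lower bound on $\|F\|_{P,2}$. That bound is automatic for $k=0$, where $F\ge 1$, but it is not available over $\mathcal P(\Xi)$ when $k\ge 1$. Adding $G$ to the envelope closes that step as well: the paper's brackets then have width at most $2\delta\|\bar F\|_{P,2}$, uniformly in $P$.

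\textbf{Dudley step.} Your Dudley paragraph is correct but not needed, since Theorem 2.8.3 already concludes pre-Gaussianity uniformly in $P$.
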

Using Proposition~\ref{prop:mgf-uniform-donsker}, we can prove the following, which is a re-statement of Proposition~\ref{prop:L_n-cgf-bound}(a). 
\begin{proposition}
\label{prop:L_n-uniform-rootn}
% {\color{orange}
% Fix $C_0 > 0$ and $k \in \N_0$. The empirical process $\lambda \mapsto \sqrt{n} \{L_n^{(k)}(\lambda) - L^{(k)}(\lambda; P)\}$ converges to a mean-zero Gaussian process on $[-C_0, C_0]$, uniformly for all $P \in \mathcal{P}(\Xi)$. As a direct consequence, as $n \to \infty$, uniformly over $P \in \mathcal P(\Xi)$.
%     \[
%      \sup_{|\lambda| \leq C_0} |L_n^{(k)}(\lambda) - L^{(k)}(\lambda; P)| = O_p(n^{-1/2}),
%     \]
% }
Fix $k \in \N_0$. As $n \to \infty$, uniformly over $P \in \mathcal P(\Xi)$,
    \[
     \sup_{|\lambda| \leq C_0} |L_n^{(k)}(\lambda) - L^{(k)}(\lambda; P)| = O_p(n^{-1/2}).
    \]
\end{proposition}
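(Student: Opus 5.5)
The plan is to pass from the uniform Donsker property of Proposition~\ref{prop:mgf-uniform-donsker} to the moment generating function, then to the cumulant generating function, and finally to $L_n$. The last step uses the deterministic inequality of Proposition~\ref{prop:L_n-cgf-bound}, applied with $C_n \equiv C_0$:
\[
\sup_{|\lambda|\le C_0}|L_n^{(k)}-L^{(k)}| \le \sup_{|\lambda|\le C_0}|\psi_n^{(k+2)}-\psi^{(k+2)}|.
\]
So it suffices to prove that $\sup_{|\lambda|\le C_0}|\psi_n^{(j)}(\lambda)-\psi^{(j)}(\lambda)| = O_p(n^{-1/2})$ uniformly over $\mathcal P(\Xi)$, for each fixed $j$.

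\textbf{Step 1 (uncentered empirical MGF).} Since $\tilde M_n^{(j)}(\lambda) = \frac1n\sum_i X_i^j e^{\lambda X_i}$, the quantity $\sqrt n\,(\tilde M_n^{(j)} - M^{(j)})$ is the empirical process indexed by $\mathcal F_j$. Uniform Donsker together with uniform pre-Gaussianity gives that $\sup_{P}\E^*\|\mathbb G_n\|_{\mathcal F_j}$ is bounded, or at least that $\|\mathbb G_n\|_{\mathcal F_j}$ is uniformly tight. For example, one can use the uniform bounded-Lipschitz convergence to a Gaussian process $\mathbb G_P$ whose suprema $\|\mathbb G_P\|_{\mathcal F_j}$ are uniformly tight. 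Hence $\sup_{|\lambda|\le C_0}|\tilde M_n^{(j)} - M^{(j)}| = O_p(n^{-1/2})$ uniformly over $\mathcal P(\Xi)$.

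\textbf{Step 2 (centering).} Write $M_n(\lambda) = e^{-\lambda\bar X}\tilde M_n(\lambda)$ and expand $M_n^{(j)}$ by the Leibniz rule into terms $(-\bar X)^{i}e^{-\lambda \bar X}\tilde M_n^{(j-i)}(\lambda)$. Sub-Gaussianity gives $\Var(X)\le \Xi$, so $\bar X = O_p(n^{-1/2})$ uniformly. Moreover $e^{-\lambda\bar X}$ lies within $O_p(n^{-1/2})$ of $1$ on $[-C_0,C_0]$. Finally $\sup_{|\lambda|\le C_0}|M^{(i)}(\lambda)|$ is bounded uniformly over $\mathcal P(\Xi)$, because $\E|X|^i e^{\lambda X}\le (\E X^{2i})^{1/2}M(2\lambda)^{1/2}\le c_i\,\Xi^{i/2} e^{4C_0^2\Xi}$. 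Combining these facts gives $\sup_{|\lambda|\le C_0}|M_n^{(j)}-M^{(j)}|=O_p(n^{-1/2})$ uniformly.

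\textbf{Step 3 (logarithm).} $\psi^{(j)}$ is a polynomial in $M^{(1)}/M,\dots,M^{(j)}/M$ (Faà di Bruno), and the same polynomial in $M_n^{(i)}/M_n$ gives $\psi_n^{(j)}$. These maps are locally Lipschitz provided $M$ stays bounded away from $0$. By Jensen, $M(\lambda)\ge e^{\lambda\E X}=1$, a uniform lower bound. By Step 2, $M_n\ge 1/2$ on $[-C_0,C_0]$ with probability tending to one uniformly. On that event, the differences of the ratios are bounded by a constant depending only on $(C_0,\Xi,j)$ times $\max_{i\le j}\sup|M_n^{(i)}-M^{(i)}|$, which yields the claim.

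\textbf{Main obstacle.} The main obstacle is Step 1: turning the abstract uniform-in-$P$ Donsker statement into uniform tightness of $\sqrt n\|\tilde M_n^{(j)}-M^{(j)}\|_\infty$. I would first try the direct route. Uniform weak convergence means $\sup_P |\E^* h(\mathbb G_n)-\E h(\mathbb G_P)|\to0$ for bounded Lipschitz $h$. Apply this with $h$ a Lipschitz approximation of $\1\{\|\cdot\|>M\}$, and control $\sup_P\P(\|\mathbb G_P\|>M)$ via uniform pre-Gaussianity, for instance Markov's inequality with $\sup_P\E\|\mathbb G_P\|_{\mathcal F_j}<\infty$. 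If that fails, a fallback is a direct maximal inequality. This would bound $\E\|\mathbb G_n\|_{\mathcal F_j}$ by chaining, using the Lipschitz-in-$\lambda$ envelope $|x|^{j+1}e^{C_0|x|}$, whose $L^2(P)$ norm is uniformly bounded over $\mathcal P(\Xi)$.
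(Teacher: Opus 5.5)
Your proposal is correct and follows essentially the same chain as the paper: uniform Donsker for $\mathcal F_j$ (Proposition~\ref{prop:mgf-uniform-donsker}), then the Leibniz expansion of $M_n = e^{-\lambda\bar X}\tilde M_n$ with $\bar X = O_p(n^{-1/2})$ (Proposition~\ref{prop:centered-mgf-uniform-donsker}), then the ratio argument of Corollary~\ref{cor:cgf-unif-conv} for $\psi_n^{(j)}$, and finally the deterministic bound of Proposition~\ref{prop:L_n-cgf-bound} with $C_n \equiv C_0$. Your Step 1 spells out the passage from uniform bounded-Lipschitz convergence plus $\sup_P \E\|\mathbb G_P\|_{\mathcal F_j}<\infty$ to uniform tightness, which the paper leaves implicit, and in Step 3 you can drop the high-probability event, since $M_n(\lambda)\ge 1$ holds deterministically by Jensen (the centered sample has mean zero).
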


We are now ready to make the convergence in Theorem~\ref{prop:second-consistency}(c) uniform over $\mathcal P(\Xi, C_0, r_-)$. 
\begin{theorem}
\label{prop:hat xi-minimax-optimality}
    Let $C_n \to \infty$. As $n \to \infty$, uniformly over $\mathcal P(\Xi, C_0, r_-)$, $\hat \xi_n^2 - \xi_*^2(P) = O_p(n^{-1/2})$.
\end{theorem}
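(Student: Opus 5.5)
The plan is to reduce everything to the fixed compact interval $[-C_0, C_0]$ using Proposition~\ref{prop:maximizer-uniform-bound}, and then invoke the uniform root-$n$ bound of Proposition~\ref{prop:L_n-uniform-rootn} with $k=0$. Since $C_n \to \infty$, we have $C_n \geq C_0$ for all large $n$. Note that Proposition~\ref{prop:maximizer-uniform-bound} is stated for $C_n = (\log n)^\alpha$. For a general $C_n \to \infty$, I would either re-run its argument or argue directly as follows.

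On the event
\[
E_n := \Bigl\{ \sup_{|\lambda|\le C_0} |L_n(\lambda) - L(\lambda;P)| \le \delta_0/3 \Bigr\},
\]
consider the maximizers of $L_n$ over $[-C_n,C_n]$. We would like to show they lie in $[-C_0,C_0]$. I would split the error into two pieces:
\[
\hat\xi_n^2 - \xi_*^2 = \Bigl(\sup_{|\lambda|\le C_n} L_n - \sup_{|\lambda|\le C_0} L_n\Bigr) + \Bigl(\sup_{|\lambda|\le C_0} L_n - \sup_{|\lambda|\le C_0} L\Bigr) + \Bigl(\sup_{|\lambda|\le C_0} L - \xi_*^2\Bigr).
\]
\textbf{Third term.} Since $\delta_P(C_0) \le -\delta_0 < 0$, we have $\sup_{|\lambda|\ge C_0} L < \sup_{|\lambda|\le C_0} L$, so $\xi_*^2 = \sup_{|\lambda|\le C_0} L$. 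The third term is therefore exactly $0$ for every $P$ in the class.

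\textbf{Second term.} This is bounded in absolute value by $\sup_{|\lambda|\le C_0}|L_n-L|$, which is $O_p(n^{-1/2})$ uniformly over $\mathcal P(\Xi)$ by Proposition~\ref{prop:L_n-uniform-rootn}.

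\textbf{First term.} This term is nonnegative. It vanishes on the event $A_n := \{\Lambda_n^* \subset [-C_0,C_0]\}$. So it suffices to show $\sup_P \P_P(A_n^c) \to 0$. Then for any $M$,
\[
\P\bigl(\sqrt n|\hat\xi_n^2-\xi_*^2|>M\bigr) \le \P(A_n^c) + \P\Bigl(\sqrt n\sup_{|\lambda|\le C_0}|L_n-L|>M\Bigr),
\]
and both pieces are uniformly small for large $n$ and $M$. For $C_n=(\log n)^\alpha$, the bound $\sup_P\P_P(A_n^c)\to 0$ is exactly Proposition~\ref{prop:maximizer-uniform-bound}.

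The main obstacle is controlling $L_n$ on $C_0 \le |\lambda| \le C_n$ for an arbitrary divergent $C_n$. Here I cannot appeal to uniform convergence of $L_n$ to $L$, and in particular not to Proposition~\ref{prop:L_n-cgf-bound}(b), which needs the logarithmic growth of $C_n$. My first attempt would exploit structure rather than convergence. Write $m_n := \max_i |X_i - \bar X|$. Then $L_n(\lambda) \le 2 m_n/|\lambda|$, so for $|\lambda| \ge \Lambda_n := 4m_n/\xi_*^2$ we get $L_n(\lambda) \le \xi_*^2/2$. This is far below $\sup_{|\lambda|\le C_0} L_n \ge \xi_*^2 - O_p(n^{-1/2})$, because $\xi_*^2 \ge \sigma^2$ is bounded below on the class (via $\delta_0$). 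That bound, however, only handles $|\lambda| \gtrsim \sqrt{\log n}$, since $m_n = O_p(\sqrt{\log n})$ uniformly over $\mathcal P(\Xi)$ by sub-Gaussian maxima.

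For the intermediate range $C_0 \le |\lambda| \le c\sqrt{\log n}$, I would use a one-sided bound. An upper deviation of $\tilde M_n(\lambda)$ above $M(\lambda)$ by a factor $e^{\lambda^2\delta_0/4}$ is unlikely, uniformly on a grid, by Markov/Chebyshev applied with $\E e^{2\lambda X}\le e^{2\lambda^2\Xi}$. The resulting variance ratio is $e^{2\lambda^2\Xi}/(n e^{\lambda^2(2L(\lambda)+\delta_0/2)})$, which I expect to stay small when $\lambda^2 \lesssim \log n/(2\Xi)$. Combining this with $L(\lambda) \le \xi_*^2-\delta_0$ should give $L_n(\lambda) \le \xi_*^2 - \delta_0/2$ there. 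If the constants do not cover the whole range, the fallback is simply to apply Proposition~\ref{prop:maximizer-uniform-bound}, i.e.\ take $C_n=(\log n)^\alpha$. The split argument above then completes the proof.
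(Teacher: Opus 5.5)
Your main line is the paper's proof. On the event that all maximizers of $L_n$ over $[-C_n,C_n]$ lie in $[-C_0,C_0]$, which has probability tending to one uniformly by Proposition~\ref{prop:maximizer-uniform-bound}, one gets $\sup_{|\lambda|\le C_0}L=\xi_*^2$ and $|\hat\xi_n^2-\xi_*^2|\le\sup_{|\lambda|\le C_0}|L_n-L|=O_p(n^{-1/2})$ by Proposition~\ref{prop:L_n-uniform-rootn}. Your three-term split is the same argument.

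The caveat you raise is a genuine gap, and the paper's proof has it too. Proposition~\ref{prop:maximizer-uniform-bound} is proved only for $C_n=(\log n)^\alpha$: its proof needs Proposition~\ref{prop:L_n-cgf-bound}(b) on all of $[-C_n,C_n]$. So your fallback, like the paper's proof, covers only $C_n=(\log n)^\alpha$ (or slower-growing $C_n$), not an arbitrary $C_n\to\infty$ as stated.

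Your sketch for general $C_n$ does not close as written, for two reasons.
\begin{itemize}
\item \emph{The two ranges do not meet.} Using $\E e^{2\lambda X}\le e^{2\lambda^2\Xi}$, Chebyshev controls upward deviations only for $|\lambda|\lesssim\sqrt{\log n/(2\Xi)}$. The bound $L_n(\lambda)\le 2m_n/|\lambda|$ drops below $\xi_*^2-\delta_0/2$ only for $|\lambda|\ge 2m_n/(\xi_*^2-\delta_0/2)$. Within the class, $m_n$ can be of order $\sqrt{2(\xi_*^2-\delta_0)\log n}$ (a bounded variable plus an independent Gaussian component). For small $\delta_0$ that threshold is about $\sqrt{8\log n/\xi_*^2}>\sqrt{\log n/(2\Xi)}$. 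This leaves a window of order $\sqrt{\log n}$ uncovered.
\item \emph{No interpolation step.} You give no device for passing from grid points to all $\lambda$.
\end{itemize}

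Both problems disappear if you drop the $m_n$ bound and use plain Markov for large $|\lambda|$:
\begin{itemize}
\item \emph{Pointwise bound.} Markov gives $\P\bigl(\tilde M_n(\lambda)>e^{\lambda^2\delta_0/8}M(\lambda)\bigr)\le e^{-\lambda^2\delta_0/8}$. Combine this with $L(\lambda)\le\xi_*^2-\delta_0$ for $|\lambda|\ge C_0$, and with $|\lambda\bar X|\le\lambda^2\delta_0/16$ (one event, of probability tending to one uniformly). Together they give $L_n(\lambda)\le\xi_*^2-5\delta_0/8$ at any fixed large $\lambda$.
\item \emph{Grid and interpolation.} Apply this on the geometric grid $\lambda_j=D_n(1+\eta)^j$ with $D_n=(\log n)^\alpha$. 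Since $\psi_n$ is convex with $\psi_n(0)=\psi_n'(0)=0$, it is nondecreasing on $[0,\infty)$. Hence $L_n(\lambda)\le(1+\eta)^2L_n(\lambda_{j+1})$ on $[\lambda_j,\lambda_{j+1}]$, and a small $\eta=\eta(\Xi,\delta_0)$ keeps this below $\xi_*^2-\delta_0/2$.
\item \emph{Failure probability.} The total is at most $\sum_j e^{-\lambda_j^2\delta_0/8}\lesssim e^{-D_n^2\delta_0/8}\to0$, uniformly in $P$. Negative $\lambda$ is symmetric.
\end{itemize}
Add the argument of Proposition~\ref{prop:maximizer-uniform-bound} on $C_0<|\lambda|\le D_n$. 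This gives $\Lambda_n^*\subset[-C_0,C_0]$ with probability tending to one for every $C_n\to\infty$, even $C_n=\infty$. The rest of your proof then goes through unchanged.
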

Combined with Proposition~\ref{prop:minimax-rates}(b), this shows that $\hat \xi^2_n$ achieves the optimal rate. Because $C_n$ can be any sequence that diverges, we have also shown that $\hat \xi^2_n$ is adaptive in the sense that it does not require knowledge of $\Xi$, $C_0$, or $\delta_0$.
\section{Rate of Divergence Under Violation of sub-Gaussianity}
We now study the mis-specified case when the underlying distribution is not sub-Gaussian. In this setting, it is desirable for $\hat \xi^2_n$ to exhibit clear signs of divergence, which 
would serve as qualitative evidence against the sub-Gaussian assumption. As we next 
show, $\hat\xi^2_n$ diverges almost surely whenever sub-Gaussianity fails.

\begin{proposition}
\label{prop:divergence}
Let $X$ be a non-sub-Gaussian random variable, i.e. if $\mathbb{E} e^{\lambda X} = \infty$ for some $\lambda \in \mathbb{R}$ or if for any $\xi^2 > 0$, there exists $\lambda \in \mathbb{R}$ such that $\mathbb{E} e^{\lambda X} \geq e^{ \lambda^2 \xi^2 / 2}$. Let $X_1, \ldots, X_n$ be i.i.d. copies of $X$. If $C_n \rightarrow \infty$, then $\hat{\xi}^2_n \xrightarrow{\mathrm{a.s.}} \infty$. 
\end{proposition}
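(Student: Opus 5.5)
The plan is to show that for every fixed $K>0$ there is a fixed $\lambda_0$ (not depending on $n$) with $L_n(\lambda_0) \to L^\dagger(\lambda_0) > K$ almost surely, where $L^\dagger$ is the population $L$ of the \emph{centered} variable (or $+\infty$). Since $C_n \to \infty$, eventually $|\lambda_0| \le C_n$, so $\hat\xi_n^2 \ge L_n(\lambda_0)$. Hence $\liminf_n \hat\xi_n^2 \ge K$ almost surely. Intersecting the almost-sure events over $K \in \N$ then gives $\hat\xi_n^2 \to \infty$ almost surely. The argument is pointwise in $\lambda$ and needs only the strong law of large numbers, not the uniform results of Section 2.

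First, the mean. If $\E|X| = \infty$, then by symmetry of roles one tail is non-integrable, and I handle it together with the infinite-MGF case below. Otherwise, write $\mu = \E X$, so $\bar X \to \mu$ almost surely. Note that $L_n$ is invariant under shifting the data, so we may work with $Y = X - \mu$. Non-sub-Gaussianity of $X$ in the stated sense translates into non-sub-Gaussianity of $Y$: either $\E e^{\lambda Y} = \infty$ for some $\lambda$, or $\sup_\lambda \frac{2}{\lambda^2}\log \E e^{\lambda Y} = \infty$.

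Fix $K$ and split into two cases.

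\emph{Case 1: finite MGF.} Suppose $\E e^{\lambda Y} < \infty$ for all $\lambda$, so the supremum is infinite. Pick $\lambda_0 \neq 0$ with $\frac{2}{\lambda_0^2}\log \E e^{\lambda_0 Y} > K+1$; such $\lambda_0$ exists because $L$ is continuous at $0$ with finite value $\mathrm{Var}(Y)$. By the SLLN, $\frac1n\sum_i e^{\lambda_0 X_i} \to \E e^{\lambda_0 X}$ almost surely. Also $e^{-\lambda_0 \bar X} \to e^{-\lambda_0\mu}$ almost surely. Therefore $M_n(\lambda_0) \to \E e^{\lambda_0 Y}$ almost surely, and $L_n(\lambda_0) > K$ eventually.

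\emph{Case 2: infinite MGF.} Suppose $\E e^{\lambda_1 Y} = \infty$ for some $\lambda_1$, say $\lambda_1 > 0$. Then $\E e^{\lambda Y} = \infty$ for all $\lambda \ge \lambda_1$, because $e^{\lambda y} \ge e^{\lambda_1 y} - 1$. Take any $\lambda_0 \ge \lambda_1$. The SLLN for nonnegative variables with infinite mean gives $\frac1n \sum_i e^{\lambda_0 X_i} \to \infty$ almost surely; this follows by truncating at level $m$, applying the ordinary SLLN, and letting $m \to \infty$. This requires $\bar X$ to stay bounded, which holds when $\E|X| < \infty$.

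If $\E|X| = \infty$, I instead choose the sign of $\lambda_0$ so that it matches the tail that is heavy in the sense of the SLLN, and bound $\frac1n\sum e^{\lambda_0 X_i}$ from below by $\frac1n e^{\lambda_0 \max_i X_i}$. I then compare this with $\bar X$ using the fact that $\max_i X_i / n$ dominates. This subcase is the main obstacle, and I expect it to need care. One clean alternative is to use the monotone bound $\log \frac1n \sum_i e^{\lambda (X_i-\bar X)} \ge \lambda \cdot \frac1n\sum_i (X_i - \bar X) = 0$ only as a safeguard, and to prove divergence via Jensen-type comparisons on truncated samples. In all cases, $L_n(\lambda_0) \to \infty$ almost surely, so $L_n(\lambda_0) > K$ eventually.

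Combining the cases with the eventual inclusion $\lambda_0 \in [-C_n, C_n]$ completes the argument. The main technical point, beyond the routine SLLN steps, is the handling of $\bar X$ when the first moment is infinite.
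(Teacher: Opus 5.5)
Your Case 1 (MGF finite everywhere) and your Case 2 with $\E|X|<\infty$ are correct, and they match the paper's first and third cases. In each you fix a $\lambda_0$ that eventually lies in $[-C_n,C_n]$, apply the SLLN (in its infinite-mean form, via truncation) to $\frac1n\sum_i e^{\lambda_0 X_i}$, and use a.s.\ boundedness of $\bar X$.

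\textbf{The gap.} The genuine gap is the subcase $\E|X|=\infty$. You leave it open, and the route you sketch does not work in general. With $\lambda_0$ fixed, the max bound gives
$L_n(\lambda_0)\ge \frac{2}{\lambda_0}\bigl(\max_i(X_i-\bar X)-\frac{\log n}{\lambda_0}\bigr)$.
This helps only if $\max_i(X_i-\bar X)$ exceeds $\log n/\lambda_0$ by a diverging amount for \emph{all} large $n$, and $\E|X|=\infty$ does not force that:
\begin{itemize}
\item $\bar X$ need not stay bounded; for Cauchy data, $\limsup_n|\bar X|=\infty$ a.s.
\item One can build a nonnegative $X$ with $\E X=\infty$, supported on sparse atoms at super-exponentially growing locations, for which $\max_{i\le n}X_i=o(\log n)$ along a subsequence with positive probability.
\end{itemize}
The appeal to ``Jensen-type comparisons on truncated samples'' is not an argument.

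\textbf{The missing idea.} Use $\lambda=0$, which always lies in $[-C_n,C_n]$. Then $\hat\xi_n^2\ge L_n(0)=\hat\sigma_n^2$, and the sample variance diverges a.s.\ whenever $\E X^2=\infty$, in particular when $\E|X|=\infty$. This is how the paper handles the case, through its lemma on $\hat\sigma_n^2$. A short proof: let $T_R$ be the clipping map onto $[-R,R]$, which is $1$-Lipschitz. Then
\[
\hat\sigma_n^2=\frac{1}{2n^2}\sum_{i,j}(X_i-X_j)^2\ \ge\ \frac{1}{2n^2}\sum_{i,j}\bigl(T_R(X_i)-T_R(X_j)\bigr)^2\ \xrightarrow{\mathrm{a.s.}}\ \mathrm{Var}\bigl(T_R(X)\bigr).
\]
The convergence holds because the middle quantity is the sample variance of the bounded variables $T_R(X_i)$, so the ordinary SLLN applies to $T_R(X_i)$ and $T_R(X_i)^2$. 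Moreover $|T_R(x)-T_R(y)|$ increases to $|x-y|$ as $R\to\infty$. So with $X'$ an independent copy of $X$, monotone convergence gives
$\mathrm{Var}(T_R(X))=\frac12\E\bigl(T_R(X)-T_R(X')\bigr)^2\uparrow\frac12\E(X-X')^2=\infty$.
Intersecting the a.s.\ events over integer $R$ finishes the argument. This also gives a clean case split: $\E X^2=\infty$ is handled by $\lambda=0$, and $\E X^2<\infty$ is covered by your Cases 1 and 2.

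\textbf{Centering.} Your claim that the hypothesis on $X$ transfers to $Y=X-\mu$ is false when $\mu\neq 0$. For example, $X\sim N(1,1)$ satisfies the stated condition via small $\lambda>0$, yet $\hat\xi_n^2\to 1$ in probability by shift invariance. The statement must be read with $\E X=0$, or as a hypothesis on the centered variable. The paper's proof tacitly assumes the same when it writes $M_n(\lambda)\to M(\lambda)$. Under that reading your Cases 1 and 2 stand.
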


Although Proposition~\ref{prop:divergence} guarantees that $\hat{\xi}_n^2$ will go to infinity, we still expect $\hat{\xi}^2_n$ to diverge faster when $X$ has a heavier tail and thus a stronger departure from sub-Gaussianity. To quantify the rate of divergence, we state a Proposition which gives a deterministic lower bound on $\hat{\xi}_n^2$ in terms of the range of the data. 

\begin{proposition}
\label{prop:divergence2}
Let $X_1, \ldots, X_n$ be any collection of real numbers and let $\Delta_n := \max_{i = 1, \ldots, n} | X_i - \bar{X}|$. If $\Delta_n \geq \frac{2\log n}{C_n}$, then $\hat{\xi}^2_n \geq \frac{\Delta_n^2}{2 \log n}$. 
\end{proposition}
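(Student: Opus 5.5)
Since the claim is deterministic, the plan is to exhibit a single $\lambda \in [-C_n, C_n]$ at which $L_n$ is already large. Write $Y_i := X_i - \bar X$ and let $i^*$ be an index with $|Y_{i^*}| = \Delta_n$. By the symmetry $\lambda \mapsto -\lambda$, which replaces $Y_i$ by $-Y_i$ and leaves $\Delta_n$ unchanged, we may assume $Y_{i^*} = \Delta_n > 0$. The exponentials are nonnegative, so keeping only the $i^*$ term in the empirical moment generating function gives, for every $\lambda > 0$,
\[
M_n(\lambda) = \frac1n \sum_{i=1}^n e^{\lambda Y_i} \geq \frac1n e^{\lambda \Delta_n},
\qquad\text{hence}\qquad
L_n(\lambda) \geq \frac{2}{\lambda^2}\bigl(\lambda \Delta_n - \log n\bigr).
\]

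Next we optimize the right-hand side over $\lambda$. Set $g(\lambda) := 2\Delta_n/\lambda - 2\log n/\lambda^2$. Its derivative is $g'(\lambda) = -2\Delta_n/\lambda^2 + 4\log n/\lambda^3$, which vanishes at $\lambda_0 = 2\log n/\Delta_n$. Substituting,
\[
g(\lambda_0) = \frac{\Delta_n^2}{\log n} - \frac{\Delta_n^2}{2\log n} = \frac{\Delta_n^2}{2\log n}.
\]
The hypothesis $\Delta_n \geq 2\log n / C_n$ is exactly the condition $\lambda_0 \leq C_n$, so $\lambda_0$ is admissible. Therefore
\[
\hat\xi_n^2 \geq L_n(\lambda_0) \geq \frac{\Delta_n^2}{2\log n}.
\]

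There is no genuine obstacle here. The only points needing care are the sign reduction, which uses $L_n(-\lambda)$ computed on $-Y_i$, and the degenerate case $n = 1$, where $\log n = 0$. When $n = 1$ we have $\Delta_n = 0$ and the bound is vacuous or trivial, so it can be set aside. For $n \geq 2$, the condition $\Delta_n > 0$ follows from the hypothesis, so $\lambda_0$ is well defined and positive.
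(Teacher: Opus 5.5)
Your proposal is correct and follows essentially the same route as the paper: keep only the extreme term in $M_n(\lambda)$ to get $L_n(\lambda) \geq \frac{2}{\lambda}\bigl(\Delta_n - \frac{\log n}{\lambda}\bigr)$, reduce to the positive side by symmetry, and evaluate at $\lambda = 2\log n/\Delta_n$, which the hypothesis places inside $[-C_n, C_n]$. Your derivation of this choice as the maximizer of the lower bound, and your remark that the hypothesis forces $\Delta_n > 0$ for $n \geq 2$, are small additions to the paper's argument rather than a different approach.
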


If we set $C_n = (\log n)^\alpha$ for an $\alpha \in (0, 1/2)$, then Proposition~\ref{prop:divergence2} applies when $\Delta_n \geq (\log n)^{1- \alpha}$. Since $\Delta_n = O_p(\sqrt{ \log n})$ for sub-Gaussian data (\cite{Wainwright_2019}, Exercise 2.12), we expect $\Delta_n$ to be of larger order when $X_1, \ldots, X_n$ are not sub-Gaussian. We consider some examples:
\begin{enumerate}[leftmargin=3mm]
\item Suppose $X_1, \ldots, X_n$ are distributed according to the mean-zero Laplace distribution. Then it holds that $\Delta_n$ is of order $\Theta_p(\log n)$ so that the condition of Proposition~\ref{prop:divergence2} holds for all large enough $n$ and we have that $\hat{\xi}^2_n$ diverges at rate $\Omega_p(\log n)$. 
\item Suppose $X_1, \ldots, X_n$ have the heavy tail t-distribution with degree $\nu > 0$. Recall that the expectation exists only when $\nu > 1$ and the variance is finite only when $\nu > 2$; the t-distribution coincides with the Cauchy distribution when $\nu = 1$. In this case, we have that $|X_{(n)} - X_{(1)}|$ is of order $\Theta_p( n^{1/\nu} )$ so that $\hat{\xi}^2_n$ diverges at rate $\Omega_p( \frac{n^{2/\nu} }{\log n})$. 
\end{enumerate}

%Part (b) quantifies the rate of divergence directly in terms of the observed data, and the hypothesis is easy to verify. If $C_n = (\log n)^\alpha$ for $\alpha \in (0, 1/2)$, it requires that $X_{(n)} - \bar X = \Omega_p((\log n)^{1 - \alpha})$, which encompasses any non sub-Gaussian distribution as $\alpha \to 1/2$ (recall that the sample maximum of a sub-Gaussian distribution grows at rate $(\log n)^{1/2}$).

%For example, if $X$ is exponentially distributed, then $X_{(n)} = \Theta_p(\log n)$, so $\hat \xi^2_n = \Omega_p(\log n)$. For heavier-tailed distributions where $X_{(n)} = \Omega_p(n^c)$, we have $\hat \xi^2_n = \Omega_p(n^{2c}(\log n)^{-1})$; for instance if $X$ is standard Cauchy, then $X_{(n)} = \Omega_p(n)$ and $\hat \xi^2_n = \Omega_p(n^{2}(\log n)^{-1})$.
\section{Application to Gene Ontology Enrichment Studies}
\label{sec:go-enrichment}
In this section, we apply the sub-Gaussian estimator to the setting of large scale permutation tests in Gene Ontology (GO) enrichment analysis, a multiple testing problem with large numbers of hypotheses relative to the number of permutations. In this case, the empirical p-value is not granular enough; we propose using the sub-Gaussian tail bound instead.

\noindent \textbf{Background}:
A standard goal in disease gene identification is to determine whether candidate genes obtained from high-throughput sequencing experiments are enriched for certain GO terms, which represent biological processes.

One method to do this \citep{levi2021domino, liu2025beyond} is as follows: gene scores representing significance are placed onto a gene-gene interaction (GGI) network. Then, regions exhibiting both high connectivity and candidate gene enrichment called \emph{modules} are identified using \emph{active module identification (AMI) algorithms}. Each (module, GO term) pair is then tested via the hypergeometric test, and the resulting negative log-10 p-value is called the \emph{HG-enrichment score}. The \emph{enrichment} $Z^{\text{obs}}_g$ is the maximal enrichment across all modules, and if $Z^{\text{obs}}_g$ is greater than some threshold, $g$ is called \emph{HG-significant}.

To test the statistical significance of the $Z^{\text{obs}}_g$'s, a standard approach are permutation tests: generate $n$ permutations of the original data, rerun the full pipeline on each, and obtain scores $Z^{(i)}_g$ for each $g \in [G]$ and $i \in [n]$. HG-significant GO terms whose empirical p-value passes Benjamini-Hochberg (BH) correction at level $\alpha$ are called \emph{empirically validated.}

The empirical p-value obtained by this test is no smaller than $n^{-1}$. However, $n$ is typically much smaller than $G$ because obtaining permutation requires rerunning the full module detection pipeline. As a result, after multiple testing correction such as BH, it can be impossible to reject any GO terms. This motivates new procedures to generate valid p-values for each GO term $g$ that are fine-grained enough to pass false discovery rate corrections. 

\noindent \textbf{The sub-Gaussian transform}: 
Fix a GO term $g \in [G]$. If $Z^{(i)}_g$ is a sub-Gaussian distribution with parameter $\xi_*^2(g)$, then the sub-Gaussian tail bound would justify the use of the p-value 
\[
\mathbf{p}_g^{\text{sG}} := \exp\biggl(-\frac{(Z^{\text{obs}}_g - \bar Z_g)^2}{2\hat \xi^2_{n}(g)}\biggr)
\]
where $\bar Z_g = \frac1n\sum_{i=1}^n Z_g^{(i)}$ and $\hat \xi^2_{n}(g)$ is the value of the estimator given $Z^{(1)}_g,\dots, Z^{(n)}_g$. 
Such an assumption can be made in this context because GO enrichment scores are bounded (the largest enrichment score attainable is $\log_{10}{N \choose N/2}$ where $N$ is the number of background genes), and therefore sub-Gaussian. However, the sub-Gaussian tail inequality using this bound on the distribution is too loose, and it is necessary to use a more precise estimator like $\hat \xi^2_n(g)$.

\noindent \textbf{The Peaks-over-thresholds method}: 
Another approach \citep{@knijnenburg2009fewer} is to define the \textit{excess function}

\[
F_g^{(u)}(y) =  \frac{ F_g(u+y) - F_g(u)}{1 - F_g(u)} = \mathbb P(Z_g - u \le y  \, | \, Z_g > u)
\]
where $F_g(u)$ is the distribution function of $Z_g$ under the null and $u \in \R$ is a threshold parameter. According to the Pickands-Balkema-de Haan theorem, as $u \to \sup \{x: F_g(x) < 1\}$, $F_g^{(u)}$ converges in distribution to a Generalized Pareto Distribution (GPD) with parameters $m_g, s_g, k_g$. This motivates fitting a GPD to the 
exceedances $\{Z^{(b)}_g - u_g : Z^{(b)}_g > u_g\}$, 
yielding estimates $\hat m_g$, $\hat s_g$, and $\hat k_g$, and 
the p-value

\[
\mathbf{p}_g^{\mathrm{POT}} := (1 - \hat F_g(u_g))
\left(1 +  
\frac{\hat k_g(Z^{\text{obs}}_g - u_g - \hat m_g)}{\hat s_g}
\right)^{-1/\hat k_g},
\]

where $\hat F_g$ is the empirical distribution function. In practice, $u_g$ must be chosen with care. If it is too large, there may be too few exceedances to obtain a reliable fit. A common choice for $u_g$ is the 90-quantile.

\noindent \textbf{Method Comparison}:
We next compare the GO terms rejected by both methods on widely used complex gene datasets. Following \cite{liu2025beyond}, we used the ``TNFa''  dataset \citep{schmidt2015acute} with the ``DIP" network  \citep{xenarios2000dip} and the ``Fly Transcriptome'' \citep{sun2024whole} dataset with the ``STRING" network \citep{szklarczyk2017string}. We  used the PAPER \citep{crane2024root}, DOMINO \citep{levi2021domino}, and FDRnet \citep{yang2021efficient} AMI algorithms. We excluded the ``Aneuploidy1'' and ``Aneuploidy2'' datasets because they exhibited very low numbers of HG-enriched GO terms, and similarly for the HotNet2 \citep{leiserson2015pan} algorithm.

For each algorithm and dataset, we compute $\mathbf{p}^{\mathrm{sG}}_g$ and $\mathbf{p}^{\mathrm{POT}}_g$ per GO term, apply BH correction at level $\alpha = 0.05$, and in Table~\ref{tab:ev-go-term} we report the number of empirically-validated GO terms followed by the number of BH-significant GO terms in parentheses.

\begin{table}[h]
    \centering
    \begin{tabular}{ccc}
    \hline
        & sub-Gaussian & POT \\\hline
       PAPER & 181 (279) & 115 (138) \\\hline
       DOMINO & 137 (143) & 167 (169) \\\hline
       FDRnet & 1 (1) & 1 (3) \\\hline
    \end{tabular} $\quad$
    \begin{tabular}{ccc}
    \hline
        & sub-Gaussian & POT \\\hline
       PAPER & 0 (0) & 4 (10) \\\hline
       DOMINO & 5 (5) & 25 (58) \\\hline
       FDRnet & 13 (39) & 20 (82) \\\hline
    \end{tabular}
\caption{\textbf{Empirically validated GO terms} by AMI algorithm 
(rows) and p-value method (columns); BH-significant counts in 
parentheses. Left: Fly Transcriptome. Right: TNFa.}
    \label{tab:ev-go-term}
\end{table}

On the Fly Transcriptome dataset, the sub-Gaussian transform yields more empirically validated GO terms under PAPER and comparable results for DOMINO and FDRnet. On the TNFa dataset, POT achieves more rejections across all three algorithms. The relative performance therefore depends heavily on the null distribution, and the two methods are best understood as complementary.

The POT method is subject to different modes of failure. First, the rate of convergence can be slow and very distribution dependent. For example, if the underlying distribution is normal, the convergence of the empirical excess function $F^{(u)}_g$ to a GPD is of order $1/u^2$ as $u \to \infty$ \citep{Degen_Embrechts_Lambrigger_2007}, but the threshold $u_g$ can grow at most at the rate of the sample maximum, which is $O(\sqrt{\log n})$ for sub-Gaussian data. The rate is therefore $O(1/\log n)$, which is slow enough that in our case of permutation testing where $n$ is relatively small, it is not clear whether the asymptotic regime has been reached. In this case, the p-values produced may not be valid. Second, for PAPER on the Fly Transcriptome dataset, 255 out of 279 (91.4\%) of the POT fits have $\hat k_g < 0$, which corresponds to a distribution supported on $[0, m_g - s_g/k_g]$. Among these fits, 8 satisfied $Z_g^{\text{obs}} > m_g - s_g/k_g$ resulting in a p-value of 0, which is a clear underestimate. Third, the standard choice of $u_g$ as the 90th quantile does not diverge as $n \to \infty$, but taking it too grow too fast leads very few exceedances, particularly for light-tailed distributions in the sub-Gaussian case. Moreover, the problem of selecting this threshold $u_g$ does not have a general solution \citep{Degen_Embrechts_Lambrigger_2007}.

On the other hand, $\hat \xi_n^2$ may also exhibit slow convergence under certain distributions (for example $\frac12 N(0, 1) + \frac12 N(0, 2)$). If the underlying distribution is heavy tailed, the divergence of $\hat \xi_n^2$ would make the p-values useless in comparison to the POT approach. In practice, we view these methods as complementary; when the data is sufficiently light-tailed the sub-Gaussian transform may be more reliable, in other cases POT may be better.

\section{Discussion}

\label{sec:discussion}
We have studied the estimation of the sub-Gaussian parameter $\xi_*^2$ and proposed an estimator $\hat \xi_n^2$ based on the truncated maximization of the empirical analogue $L_n$. One of our main findings is that the truncation gap $\delta_P$ controls the fundamental difficulty in estimating $\xi_*^2$: without control over $\delta_P$ no estimator can be uniformly consistent; stronger decay assumptions on $\delta_P$ lead to faster minimax rates, culminating at a rate of $n^{-1/2}$ at which $\hat \xi_n^2$ is minimax optimal and adaptive. Under departures from sub-Gaussianity, the estimator diverges almost surely at a rate depending on the range of the observed data, providing a natural diagnostic of mis-specification. Our GO enrichment application demonstrate that for large scale permutation testing, sub-Gaussian tail bounds can act as a complementary approach to the POT method.

There are a few limitations to our work. For one, the hypotheses of Theorem~\ref{prop:second-consistency} and Proposition~\ref{prop:ptwise-asymptotic-normality} are difficult to deduce from observed data, though in practice the heuristic suggested in Section~\ref{sec:inference} may give a loose estimate of the regime. The rate in Theorem~\ref{prop:second-consistency}(b) has an additional $\eps$ term which is not known to be sharp, and a more detailed analysis is required to close this gap. While taking $C_n = (\log n)^\alpha$ is theoretically sound, a data-driven choice would be more practical. Finally, the minimax lower bound in Proposition~\ref{prop:minimax-rates}(b) is not met, and it is an open problem to determine the minimax rate when other conditions on $\delta$ are imposed. Addressing these gaps are natural directions for future work.

\bibliographystyle{dcu}
\bibliography{all}

\newpage

\appendix

\section{Appendix}
\label{sec:appendix}
\subsection{Proofs for Section 2}
\begin{proof}[Proof of Proposition~\ref{prop:xi-L-char}]
    Firstly, since $X$ is sub-Gaussian, $M(\lambda) < \infty$ for all $\lambda \in \mathbb{R}$ and hence $|L(\lambda)| < \infty$ as well. Now we observe that, by rearranging the inequality, we have that $\xi > 0$ satisfies
    \[
    \E[e^{\lambda X}] \leq \exp\left(\frac{\lambda^2\xi^2}{2}\right)
    \]
    if and only if $\xi^2 \geq L(\lambda)$ for $\lambda \ne 0$. Since $\xi^2 \geq \sigma^2$ and $\lambda$ was arbitrary, we have $\xi^2 \geq \sup_{\lambda \in \R} L(\lambda)$, so  $\xi^2_* \geq \sup_{\lambda \in \R} L(\lambda)$ too. Conversely, for any $\eps > 0$ there must be $\lambda \in \R$ such that 
    \[
    \E[e^{\lambda X}] >\exp\left(\frac{\lambda^2(\xi^* - \eps)^2}{2}\right),
    \]
    i.e. $(\xi^* - \eps)^2 < 2\lambda^{-2}\psi(\lambda) = L(\lambda)$, so that $\xi^*$ is the least upper bound. It remains to show that $L$ is continuous. This can be done by applying L'H\^opital's rule to $ \lim_{\lambda \to 0} 2\lambda^{-2}\psi(\lambda)$ and recalling that $\psi''(0) = \sigma^2$. 
\end{proof}
\subsubsection{Proof of Theorem~\ref{prop:sub-gaussian-convergence-lemma}}
Our first main goal of this section is prove Theorem~\ref{prop:sub-gaussian-convergence-lemma}. 

First, we need the following concentration inequality. Note that we cannot immediately apply any concentration inequalities to $M_n^{(k)}$, because the terms in the summation are not independent. 
\begin{proposition}
\label{propn:mgf-concentration-ineq}
    Let $k \in \N_0$. Assume that $M(\lambda)$ exists at $\lambda$, and that $|X| \leq B$. Then, for any $|\lambda| \leq C$, 
    \[
    \P\left(|M_n^{(k)}(\lambda) - M^{(k)}(\lambda)| > t\right) \leq 2\exp\left(\frac{-2n^{1/2}}{4B^2}\right) + 2\exp\left(\frac{-nt^2}{8B^ke^{4BC+1}}\right).
    \]
\end{proposition}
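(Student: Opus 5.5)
Our plan is to separate the dependence created by centering at $\bar X$ from the i.i.d.\ fluctuation. Write $M_n^{(k)}(\lambda) = \frac1n\sum_{i=1}^n (X_i-\bar X)^k e^{\lambda(X_i - \bar X)}$, and compare it with the i.i.d.\ average $\tilde M_n^{(k)}(\lambda) = \frac1n\sum_i X_i^k e^{\lambda X_i}$, whose mean is $M^{(k)}(\lambda)$ because $\E X = 0$. We then split
\[
|M_n^{(k)}(\lambda) - M^{(k)}(\lambda)| \le |M_n^{(k)}(\lambda) - \tilde M_n^{(k)}(\lambda)| + |\tilde M_n^{(k)}(\lambda) - M^{(k)}(\lambda)|,
\]
and intersect with the event $\{|\bar X| \le n^{-1/4}\}$.

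\textbf{First term (centering error).} Since $|X_i|\le B$, Hoeffding's inequality gives $\P(|\bar X| > n^{-1/4}) \le 2\exp(-2n\cdot n^{-1/2}/(4B^2)) = 2\exp(-2n^{1/2}/(4B^2))$. This is the first term of the stated bound. On the complementary event we have $|X_i - \bar X| \le 2B$, so the map $u \mapsto u^k e^{\lambda u}$ is Lipschitz on $[-2B,2B]$ with constant of order $(2B)^{k}(1+ |\lambda|) e^{2B|\lambda|}$. Hence
\[
|M_n^{(k)}(\lambda) - \tilde M_n^{(k)}(\lambda)| \lesssim B^k (1+C) e^{2BC} n^{-1/4}
\]
deterministically on this event.

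\textbf{Second term (i.i.d.\ fluctuation).} The summands $X_i^k e^{\lambda X_i}$ are i.i.d.\ and bounded in absolute value by $B^k e^{BC}$. Hoeffding, or Bernstein with the variance bound $\E X^{2k} e^{2\lambda X} \le B^{2k} e^{2BC}$, then gives a tail of the form $2\exp(-nt^2/(c\,B^{2k}e^{2BC}))$. We would absorb constants so that this matches the stated $2\exp(-nt^2/(8B^k e^{4BC+1}))$; the generous exponent $e^{4BC+1}$ leaves room for that. A union bound over the two events finishes the proof.

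\textbf{Main obstacle.} The centering error is of order $n^{-1/4}$, which is not small relative to a general $t$, and the stated bound has no separate term to absorb it. The only workable route is to split $t$ in half. We take $t/2$ for the i.i.d.\ term (this is where the factor $8$ arises) and require $t/2$ to dominate the deterministic centering error. If that is not automatically true, we would instead sharpen the first term. Expanding $e^{-\lambda\bar X}$, the centering error equals $\tilde M_n^{(k)}$-type averages times $O(|\lambda|\,|\bar X|)$. Its fluctuation can therefore be folded into the i.i.d.\ term, since $\bar X$ is itself an i.i.d.\ average, with the remaining deviation controlled by the Hoeffding event. Checking that the constants come out as displayed, in particular reconciling $B^k$ versus $B^{2k}$, is the delicate bookkeeping we expect to spend the most care on.
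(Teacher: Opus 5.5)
Your decomposition is the one the paper uses. It passes through $\tilde M_n^{(k)}$, controls $\P(|\bar X|>n^{-1/4})$ by Hoeffding (which gives exactly the first term), and bounds the centering error on the complementary event by a mean-value estimate $Kn^{-1/4}$ with $K=(k(2B)^{k-1}+|\lambda|(2B)^{k})e^{2B|\lambda|}$. It then applies Hoeffding at level $t/2$ to the i.i.d.\ average and finishes with a union bound.

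The two issues you flag are exactly where the paper's argument is loose, and it does not resolve them either. It simply asserts that $Kn^{-1/4}<t/2$ ``for large enough $n$'', so what it actually proves needs $n\ge(2K/t)^4$. Its Hoeffding step also ends with $8B^{2k}e^{4BC+1}$, not $8B^{k}e^{4BC+1}$. Your claim that the spare room in $e^{4BC+1}$ absorbs this is wrong: you would need $B^{2k}e^{2BC}\lesssim B^{k}e^{4BC+1}$, which fails when $BC$ is small and $B$ is large. Treat $B^k$ as a typo for $B^{2k}$.

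The centering obstacle cannot be fixed while keeping the $t$-free first term. The displayed inequality is false without a lower bound on $n$ in terms of $t$. Take Rademacher $X$ with $B=1$ (so the $B^k$ issue plays no role), $k=5$ and $\lambda=C=0$. Then $M^{(5)}(0)=0$ and $M_n^{(5)}(0)=\frac1n\sum_i(X_i-\bar X)^5=-4\bar X(1-\bar X^4)$. At $n=10^6$ and $t=0.016$:
\begin{itemize}
\item the left side is essentially $\P(|\bar X|>0.004)\approx\P(|Z|>4)\approx 6\times10^{-5}$ for $Z$ standard normal;
\item the right side is $2e^{-500}+2e^{-256/(8e)}\approx1.6\times10^{-5}$.
\end{itemize}
So the proposition holds only in the ``$n$ large depending on $t,B,C,k$'' form that the paper's proof delivers.

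Your fallback is the right repair. Since $|\bar X|\le B$ always, the mean-value bound $|M_n^{(k)}(\lambda)-\tilde M_n^{(k)}(\lambda)|\le K|\bar X|$ holds deterministically. Applying Hoeffding to $\bar X$ at level $t/(2K)$ then gives, for all $n$ and $t$,
\[
\P\bigl(|M_n^{(k)}(\lambda)-M^{(k)}(\lambda)|>t\bigr)\le 2e^{-nt^2/(8K^2B^2)}+2e^{-nt^2/(8B^{2k}e^{2BC})}.
\]
This is not the displayed form, but it is what the covering argument in the proof of Proposition~\ref{prop:bounded-convergence-lemma} actually needs. There $t=n^{-1/2+\eps}$ lies far below $Kn^{-1/4}$ for small $\eps$, so the paper's version of the inequality does not apply at that point.
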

\begin{proof}
Assume that $B > 1$ without loss of generality. 
     Put $\tilde M_n(\lambda) := \frac1n\sum_{i=1}^n e^{\lambda X_i}$, and then observe that by the triangle inequality,
    \begin{align*}
        |M_n^{(k)}(\lambda) - M^{(k)}(\lambda)| &= |M_n^{(k)} - \tilde M_n^{(k)}(\lambda)| + |\tilde M_n^{(k)}(\lambda) - M^{(k)}(\lambda)|.
    \end{align*}
    Now by Hoeffding's inequality, we have, for any $\delta_n \to 0$, 
    \begin{equation}
        \P(|\bar X| > \delta_n) \leq 2\exp\left(-\frac{2n\delta_n^2}{(2B)^2}\right). \label{eqn:hoeffding-mean}
    \end{equation}
    Suppose that $|\bar X| < \delta_n$. For the first term, define $g(\mu) := \frac1n\sum_{i=1}^n (X_i - \mu)^k e^{\lambda (X_i - \mu)}$. Then $g(0) = \tilde M_n^{(k)}(\lambda)$ and $g(\bar X) = M_n^{(k)}(\lambda)$, so by the mean-value theorem there is $\tilde \mu \in [0 \wedge \bar X, 0 \vee \bar X]$ such that 
    \[
    M_n^{(k)}(\lambda) - \tilde M_n^{(k)}(\lambda) = g'(\tilde \mu)\bar X,
    \]
    but for any $\mu \in [-C, C]$, 
    \[
    |g'(\mu)| \leq \frac1n\sum_{i=1}^n \left|k(X_i - \mu)^{k-1} + \lambda(X_i - \mu)^{k}\right|e^{\lambda(X_i - \mu)} \leq \left(k(2B)^{k-1} + C(2B)^{k}\right)e^{2B|\lambda|},
    \]
    so 
    \[
    |M_n^{(k)}(\lambda) - \tilde M_n^{(k)}(\lambda)| \leq \left(k(2B)^{k-1} + C(2B)^{k+1}\right)e^{2B|\lambda| }\delta_n.
    \]
    Therefore sending $\delta_n \to 0$ makes this term smaller than $t/2$. 
    For the second term, note that when $\lambda \in [-C, C]$, 
    \[
    \tilde M^{(k)}_n(\lambda) = \frac1n\sum_{i=1}^n X_i^{k}e^{\lambda X_i} \leq B^ke^{B|\lambda|} \leq B^ke^{2BC},
    \]
    so by Hoeffding's inequality again 
    \begin{align}
        \P\left(|\tilde M_n^{(k)}(\lambda) - M^{(k)}(\lambda)| > \frac{t}{2e^{|\lambda|\delta_n}}\right) &\leq 2\exp\left(-\frac{2n}{(2B^ke^{2BC})^2}\frac{t^2}{4e^{|\lambda| \delta_n}}\right)\notag\\ &\leq  2\exp\left(-\frac{nt^2}{8B^{2k}e^{4BC+2|\lambda|\delta_n}}\right)
        \notag \\ 
        &\leq 
2\exp\left(-\frac{nt^2}{8B^{2k}e^{4BC+1}}\right)\label{eqn:hoeffding-mgf}
    \end{align}
    where the last inequality is valid for large enough $n$, since $\delta_n \to 0$. Setting $\delta_n := n^{-1/4}$ in (\ref{eqn:hoeffding-mean}) and (\ref{eqn:hoeffding-mgf}) yields the desired conclusion.
\end{proof}
Equipped with Proposition~\ref{propn:mgf-concentration-ineq}, we are ready to prove a weaker analogue of Theorem~\ref{prop:sub-gaussian-convergence-lemma}, first for bounded random variables. 
\begin{proposition}
\label{prop:bounded-convergence-lemma}
    Let $k \in \N_0$. Fix $\alpha, \beta \in (0, 1)$ such that $\alpha + \beta < 1$, and set 
    \[
    C_n := (\log n)^\alpha, \quad B_n := (\log n)^\beta.
    \]
    Assume that $M(\lambda)$ exists for all $\lambda \in \R$, and that $\max_{i \in [n]}|X_i| \leq B_n$ almost surely. Then for all $\eps > 0$,
    \[
    \sup_{\lambda \in [-C_n, C_n]} |M_n^{(k)}(\lambda) - M^{(k)}(\lambda)|= O_p(n^{-1/2 + \eps}).
    \]
\end{proposition}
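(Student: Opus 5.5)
The plan is a standard discretization-plus-union-bound argument built on Proposition~\ref{propn:mgf-concentration-ineq}, applied with $B = B_n$ and $C = C_n$.

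\textbf{Step 1: a fine grid.} Place a grid $\Lambda_n \subset [-C_n, C_n]$ with mesh $\eta_n = n^{-1}$, so $|\Lambda_n| \lesssim C_n n$, which is polynomial in $n$.

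\textbf{Step 2: pointwise bound and union bound.} At each grid point apply Proposition~\ref{propn:mgf-concentration-ineq} with $t = n^{-1/2+\eps}$. The first term is $2\exp(-n^{1/2}/(2B_n^2))$. Since $B_n^2 = (\log n)^{2\beta}$ is polylogarithmic, this term is superpolynomially small. The second term is
\[
2\exp\bigl(-n^{2\eps}/(8B_n^{2k} e^{4B_nC_n+1})\bigr).
\]
Here $B_nC_n = (\log n)^{\alpha+\beta}$ with $\alpha+\beta<1$, so $e^{4B_nC_n} = n^{o(1)}$, and $B_n^{2k}$ is polylogarithmic. The exponent is therefore of order $n^{2\eps - o(1)}$, which beats any polynomial. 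A union bound over the $O(n C_n)$ grid points still leaves a probability that tends to $0$.

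One point needs care: the proof of Proposition~\ref{propn:mgf-concentration-ineq} used ``large enough $n$'' and a constant $C$ in the mean-value step. I would redo that step, keeping the factor $(k(2B_n)^{k-1} + C_n(2B_n)^{k+1})e^{2B_nC_n}\delta_n$ explicit. With $\delta_n = n^{-1/4}$ it is $n^{-1/4+o(1)}$, which is not good enough. So I would instead take $\delta_n = n^{-1/2+\eps/2}$ in the Hoeffding bound for $\bar X$. That probability is $\exp(-n^{\eps}/(2B_n^2)) \to 0$ superpolynomially, and the term becomes $n^{-1/2+\eps/2+o(1)} = o(n^{-1/2+\eps})$.

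\textbf{Step 3: filling in between grid points.} Both $M_n^{(k)}$ and $M^{(k)}$ are Lipschitz on $[-C_n, C_n]$, with derivative $M^{(k+1)}$ bounded by $(2B_n)^{k+1}e^{2B_nC_n} = n^{o(1)}$. For $M$ itself I would use the almost-sure bound $|X| \le B_n$, which holds since $\max_i |X_i| \le B_n$ almost surely and the $X_i$ are i.i.d. The discretization error is then at most $n^{o(1)}\eta_n = n^{-1+o(1)} = o(n^{-1/2+\eps})$.

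\textbf{Combining.} Together these give $\sup_{|\lambda|\le C_n}|M_n^{(k)}-M^{(k)}| \le 2n^{-1/2+\eps}$ with probability tending to one, for every $\eps>0$. That is the claimed $O_p(n^{-1/2+\eps})$.

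The main obstacle is the bookkeeping that turns every $e^{cB_nC_n}$ factor into $n^{o(1)}$. This works exactly because $\alpha+\beta<1$, and the inherited constant $n$-dependence in Proposition~\ref{propn:mgf-concentration-ineq} has to be tracked explicitly rather than absorbed.
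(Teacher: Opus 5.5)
Your proposal is correct and follows the paper's proof essentially step for step: a mesh-$n^{-1}$ grid of size $O(nC_n)$, a union bound built on Proposition~\ref{propn:mgf-concentration-ineq} at $t = n^{-1/2+\eps}$, and Lipschitz interpolation with constant $(2B_n)^{k+1}e^{2B_nC_n} = n^{o(1)}$, all resting on $e^{B_nC_n} = n^{o(1)}$ from $\alpha+\beta<1$. Your re-derivation of the centering step is a real repair and not pedantry: with the paper's choice $\delta_n = n^{-1/4}$ the bound on $|M_n^{(k)}-\tilde M_n^{(k)}|$ is only $n^{-1/4+o(1)}$, so plugging $t = n^{-1/2+\eps}$ into that proposition is not justified for $\eps < 1/4$, whereas your choice $\delta_n = n^{-1/2+\eps/2}$ (a single $\lambda$-free event of probability at least $1 - 2\exp(-n^{\eps}/(2B_n^2))$) makes the argument go through.
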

\begin{proof}
    First we note that, because $\alpha + \beta < 1$, we have 
    \begin{equation}
        \exp(C_nB_n) = \exp((\log n)^{\alpha + \beta}) = n^{(\log n)^{\alpha + \beta - 1}} = n^{o(1)}.
        \label{eqn:alpha-beta}
    \end{equation}
    For each $n$, let $\Pi_n \subset [-C_n, C_n]$ be a $n^{-1}$-covering set of $[-C_n, C_n]$ of size $N:=C_nn + 1$. Then, for any $\lambda \in [-C_n, C_n]$, there is $\pi_\lambda \in \Pi_n$ such that $|\lambda - \pi_\lambda| < n^{-1}$, and the following holds by the triangle inequality:
    \begin{equation}
        |M_n^{(k)}(\lambda) - M^{(k)}(\lambda)| \leq |M_n^{(k)}(\lambda) - M_n^{(k)}(\pi_\lambda)| + |M_n^{(k)}(\pi_\lambda) - M^{(k)}(\pi_\lambda)| + |M^{(k)}(\pi_\lambda) - M^{(k)}(\lambda)|.\label{eqn:cover-proj-ineq} 
    \end{equation}
    Let us first address the middle term of (\ref{eqn:cover-proj-ineq}). By Proposition~\ref{propn:mgf-concentration-ineq} and then (\ref{eqn:alpha-beta}), 
    \begin{multline*}
\P\left(\max_{\pi \in \Pi_n} |M_n^{(k)}(\pi) - M^{(k)}(\pi)| > n^{-1/2 + \eps}\right) \leq N\P\left(|M_n^{(k)}(\pi) - M^{(k)}(\pi)| > n^{-1/2 + \eps}\right) \\ 
        \leq 2N\left\{\exp\left(\frac{-2n^{1/2}}{4B^2_n}\right) + \exp\left(\frac{-n^{2\eps}}{8B_n^ke^{4C_nB_n + 1}}\right)\right\}\\
        \leq 2\{n(\log n)^\alpha + 1\}\left\{\exp\left(\frac{-2n^{1/2}}{4(\log n)^{2\beta}}\right)+ \exp\left(\frac{-n^{2\eps}}{8(\log n)^{k\beta}n^{o(1)}}\right)\right\} 
    \end{multline*}
    which vanishes in the limit $n \to \infty$. 
   It then follows that
    \begin{equation}
        \max_{\pi \in \Pi_n} |M_n^{(k)}(\pi) - M^{(k)}(\pi)| = O_p(n^{-1/2 + \eps}).
        \label{eqn:emgf-cover-bound}
    \end{equation}
    Next, with probability one, $\max_i |X_i| \leq B_n$. On that event, for all $\lambda \in [-C_n, C_n]$,
    \[
    M_n^{(k+1)}(\lambda) = \frac1n\sum_{i=1}^n(X_i - \bar X)^{k+1}e^{\lambda |X_i - \bar X|} \leq (2B_n)^{k+1}e^{2C_nB_n},
    \]
    which implies by the mean-value theorem that for any $\lambda, \lambda' \in [-C_n, C_n]$, there is $\tilde \lambda$ such that 
    \[
    |M_n^{(k)}(\lambda) - M_n^{(k)}(\lambda')| \leq |M_{n}^{(k+1)}(\tilde \lambda)||\lambda - \lambda'| \leq (2B_n)^{k+1}e^{2C_nB_n}|\lambda - \lambda'|
    \]
    and in particular 
    \begin{equation}
        |M_n^{(k)}(\lambda) - M^{(k)}_n(\pi_\lambda)| \leq (2B_n)^{k+1}e^{2C_nB_n}n^{-1}.\label{eqn:emgf-lipschitz}
    \end{equation}
    Identical reasoning then yields
    \begin{equation}
        |M^{(k)}(\lambda) - M^{(k)}(\pi_\lambda)| \leq (2B_n)^{k+1}e^{C_nB_n}n^{-1}.\label{eqn:mgf-lipschitz}
    \end{equation}
    Now by (\ref{eqn:alpha-beta}), we have 
    \[
    2B_n^{k+1}e^{C_nB_n}n^{-1} = 2(\log n)^{\beta(k+1)} n^{-1+o(1)} = o(n^{-1/2}),
    \]
    so it follows that, with probability one,
    \begin{equation}
        |M_n^{(k)}(\lambda) - M^{(k)}_n(\pi_\lambda)| = o(n^{-1/2}), \quad |M^{(k)}(\lambda) - M^{(k)}(\pi_\lambda)| = o(n^{-1/2}).
        \label{eqn:emgf-final-bound}
    \end{equation}
    We conclude by putting (\ref{eqn:emgf-cover-bound}) and (\ref{eqn:emgf-final-bound}) into (\ref{eqn:cover-proj-ineq}) to get 
    \[
    |M_n(\lambda) - M(\lambda)| \leq o(n^{-1/2}) + O_p(n^{-1/2 + \eps}) + o(n^{-1/2}) = O_p(n^{-1/2 + \eps}).
    \]
\end{proof}
Our next goal is to extend the result of Proposition~\ref{prop:bounded-convergence-lemma} to sub-Gaussian random variables. To do this, we first recall some facts about them.
\begin{proposition}
\label{prop:sub-Gaussian-props}
    Let $X$ be a sub-Gaussian random variable with parameter $\xi_*$. Then: 
    \begin{enumerate}
        \item[(a)] For all $t > 0$, $\P(|X - \E X| > t) \leq \exp\left(-t^2(2\xi_*)^{-2}\right).$
        \item[(b)] For all $k \in\N$, $\E(|X|^{k}) \leq 2\xi_*^k\Gamma(k/2 + 1)$.
        \item[(c)] For all $\lambda \in \R$, $M(\lambda) \leq \exp(2^{-1}\lambda^2\xi_*^2)$.
    \end{enumerate}
\end{proposition}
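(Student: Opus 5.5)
We prove the three parts in the order (c), (a), (b). Part (c) is the definition of $\xi_*^2$ together with Proposition~\ref{prop:xi-L-char}. Part (a) follows from (c) by the Chernoff method. Part (b) follows from (a) through the layer-cake formula.

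For (c), Proposition~\ref{prop:xi-L-char} gives $\xi_*^2 = \sup_\lambda L(\lambda)$. Hence $L(\lambda) \le \xi_*^2$ for every $\lambda \neq 0$. Rearranging $\frac{2}{\lambda^2}\log M(\lambda) \le \xi_*^2$ gives $M(\lambda) \le e^{\lambda^2\xi_*^2/2}$. At $\lambda = 0$ the inequality reads $1 \le 1$. So the infimum defining $\xi_*^2$ is attained and (c) holds.

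For (a), first note that sub-Gaussianity forces $\E X = 0$: otherwise $L(\lambda) \sim 2\E X/\lambda$ as $\lambda \to 0$, so $L$ is unbounded, contradicting (c). Thus $X - \E X = X$. For $t > 0$ and $\lambda > 0$, Markov's inequality and (c) give $\P(X > t) \le e^{-\lambda t}M(\lambda) \le \exp(-\lambda t + \lambda^2\xi_*^2/2)$. Optimizing at $\lambda = t/\xi_*^2$ yields $\P(X > t) \le e^{-t^2/(2\xi_*^2)}$. Applying the same bound to $-X$, which has the same parameter, and taking a union bound gives
\[
\P(|X| > t) \le 2e^{-t^2/(2\xi_*^2)}.
\]
It remains to reach the stated form $\exp(-t^2/(4\xi_*^2))$, that is, to absorb the factor $2$ by halving the exponent. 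We have $2e^{-t^2/(2\xi_*^2)} \le e^{-t^2/(4\xi_*^2)}$ exactly when $t^2 \ge 4\xi_*^2\log 2$. So (a) holds as stated in that range.

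This absorption step is the main obstacle. For small $t$ the stated bound can fail. Take a Rademacher variable, for which $\xi_*^2 = 1$. Then $\P(|X| > 1/2) = 1$, while $e^{-1/16} < 1$. Since the surrounding proofs only invoke (a) for $t \gtrsim B_n \to \infty$, the plan is to prove (a) for $t \ge 2\xi_*\sqrt{\log 2}$. For smaller $t$ we record the $2e^{-t^2/(2\xi_*^2)}$ version, which is the form actually needed downstream, and we flag that the inequality cannot hold for all $t > 0$ as literally written.

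For (b), write $\E|X|^k = \int_0^\infty k t^{k-1}\P(|X| > t)\,dt$ and insert the two-sided tail bound $2e^{-t^2/(2\xi_*^2)}$. Substituting $u = t^2/(2\xi_*^2)$ turns the integral into a Gamma integral:
\[
\E|X|^k \le 2\,(\sqrt{2}\,\xi_*)^k\,\Gamma(k/2+1).
\]
This is the stated bound with $\xi_*$ replaced by $\sqrt{2}\,\xi_*$. The extra factor $2^{k/2}$ does not come out of this argument. We check it against $X \sim N(0,1)$ at $k = 2$: the stated bound would give $\E X^2 \le 2\,\Gamma(2) = 2$, which holds. However, as with (a), we can only certify the bound with $\sqrt2\,\xi_*$, and we note that this is enough for the use in Theorem~\ref{prop:sub-gaussian-convergence-lemma}. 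Any polynomial-in-$k$ constant suffices there, because these moment bounds are only used to show that truncation at $B_n$ costs a negligible amount.
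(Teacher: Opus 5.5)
The paper gives no proof of this proposition (it is introduced with ``we first recall some facts''), so there is no authors' argument to compare against. Your route is the standard one: the definition for (c), Chernoff plus a union bound for (a), and the layer-cake formula for (b). Everything you actually establish is correct: $M(\lambda)\le e^{\lambda^2\xi_*^2/2}$, $\P(|X|>t)\le 2e^{-t^2/(2\xi_*^2)}$, and $\E|X|^k\le 2(\sqrt2\,\xi_*)^k\Gamma(k/2+1)$. Your Rademacher example validly shows that (a) as written fails for small $t$: at $t=1/2$ one gets $1>e^{-1/16}$. Your threshold $t\ge 2\xi_*\sqrt{\log 2}$ is exactly where the factor $2$ can be absorbed.

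Where you fall short is (b). You present it as a bound your argument merely cannot reach, and your sanity check at $k=2$ suggests it might still hold. It is in fact false. For $X\sim N(0,1)$ we have $\xi_*^2=1$, and at $k=6$, $\E X^6=15>12=2\Gamma(4)$. More generally, for $k=2m$ the ratio $\E X^{2m}/(2\,m!)=\binom{2m}{m}/2^{m+1}\sim 2^{m-1}/\sqrt{\pi m}$ diverges. So the $\sqrt2$ in your version is genuinely needed; it can only be improved by factors polynomial in $k$.

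Hence neither (a) nor (b) can be proved as stated, and the correct conclusion is that your corrected versions replace them. They do suffice downstream. The tail bound is only invoked at $t=B_n\to\infty$, and the paper's proofs actually apply a bound of the form $\exp(-B_n^2/(2\Xi^2))$, which is your Chernoff bound up to a harmless factor $2$. The moment bound is only used with $k$ fixed.

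Your justification that ``any polynomial-in-$k$ constant suffices'' is the wrong reason, since $2^{k/2}$ is not polynomial in $k$. What matters is that $k$ is fixed in every application, so any finite constant depending on $k$ and $\Xi$ works.
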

We shall also need the following generalization of H\"older's inequality: 
\begin{proposition}[\cite{folland1999real}, Exercise 6.31]
\label{prop:generalized-holder}
    Let $1 \leq p, q, r \leq \infty$ satisfy $p^{-1} + q^{-1} + r^{-1} = 1$, and let $(X, \mathcal M, \mu)$ be a measure space. If $f \in L^p(\mu), g \in L^q(\mu), h \in L^r(\mu)$, then $fgh \in L^1(\mu)$. Moreover,
    \[
    \int |fgh| \, d\mu \leq \left(\int |f|^p \, d\mu\right)^{\frac1p} \left(\int |g|^q \, d\mu\right)^{\frac1q} \left(\int |h|^r \, d\mu\right)^{\frac1r}.
    \]
\end{proposition}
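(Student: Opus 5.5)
The plan is to reduce the three-function inequality to two applications of the ordinary two-function H\"older inequality. Integrability of $fgh$ will then follow at once, since the right-hand side is finite whenever $f \in L^p(\mu)$, $g \in L^q(\mu)$, $h \in L^r(\mu)$. The only real care needed is with infinite exponents.

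\emph{Case $r = \infty$.} In this case $|h| \leq \|h\|_\infty$ $\mu$-a.e. Hence $\int |fgh| \, d\mu \leq \|h\|_\infty \int |fg| \, d\mu$. Since $p^{-1} + q^{-1} = 1$, the ordinary H\"older inequality gives $\int |fg| \, d\mu \leq \|f\|_p \|g\|_q$. The cases $p = \infty$ or $q = \infty$ are symmetric.

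\emph{Case $p, q, r < \infty$.} Define $s \in [1, \infty)$ by $s^{-1} = p^{-1} + q^{-1} = 1 - r^{-1}$, so that $s$ and $r$ are conjugate exponents. The first application of H\"older, with exponents $(s, r)$, gives
\[
\int |fg|\,|h| \, d\mu \leq \left(\int |fg|^s \, d\mu\right)^{\frac1s} \left(\int |h|^r \, d\mu\right)^{\frac1r}.
\]
For the first factor, note that $p/s \geq 1$ and $q/s \geq 1$, and $s/p + s/q = 1$, so $p/s$ and $q/s$ are conjugate exponents. The second application of H\"older, to $|f|^s$ and $|g|^s$ with these exponents, gives
\[
\int |f|^s|g|^s \, d\mu \leq \left(\int |f|^p \, d\mu\right)^{\frac sp}\left(\int |g|^q \, d\mu\right)^{\frac sq}.
\]
Taking $s$-th roots and combining with the first display yields the claimed bound.

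The main (minor) obstacle is bookkeeping the exponents so that every H\"older application uses a genuine conjugate pair in $[1, \infty]$. In particular, the degenerate situation $r = 1$ forces $p = q = \infty$. That situation is covered by the first case after renaming the roles of $f, g, h$, using $|fg| \leq \|f\|_\infty \|g\|_\infty$ a.e.
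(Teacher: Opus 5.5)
Your proof is correct. The case split is exhaustive: either some exponent is infinite, or all three are finite. In the finite case, $(s,r)$ and $(p/s,q/s)$ are genuine conjugate pairs, so both applications of the ordinary H\"older inequality are legitimate.

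The paper gives no proof of its own to compare against. It quotes the result from Folland (Exercise 6.31) and uses it only with $p=q=r=3$. There your argument reduces to H\"older with exponents $(3/2,3)$ followed by Cauchy--Schwarz on $|f|^{3/2}$ and $|g|^{3/2}$. Your two-step reduction is the standard solution of that exercise.

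One small point: your closing remark about $r=1$ is harmless but redundant. That case forces $p=q=\infty$, so it is already covered by your first case.
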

The idea is to show that sub-Gaussian random variables can be well-approximated by truncating their range. We may then apply Proposition \ref{prop:bounded-convergence-lemma} to conclude. 
\begin{proof}[Proof of Theorem~\ref{prop:sub-gaussian-convergence-lemma}]
    Fix $\beta \in (\alpha, 1- \alpha)$ and let $B_n := (\log n)^\beta$. Then, set $X_i^{\flat} := X_i\1\{|X_i| \leq B_n\}$. Accordingly, let $\bar X^{\flat} = \frac1n\sum_{i=1}^n X_i^{\flat}$, and 
    \[
    M_n^{\flat}(\lambda) := \frac1n\sum_{i=1}^n e^{\lambda (X^{\flat}_i - \bar X^{\flat})},  \quad M^\flat(\lambda) := \E[e^{\lambda X^{\flat}_1}].
    \]
    Thus, by the triangle inequality, we have 
    \begin{align}
        |M^{(k)}(\lambda) - M_n^{(k)}(\lambda)| \leq &|M^{(k)}(\lambda) - (M^\flat)^{(k)}(\lambda)| \ + \notag\\& |(M^\flat)^{(k)}(\lambda) - (M^\flat_n)^{(k)}(\lambda)| + |(M^\flat_n)^{(k)}(\lambda) - M_n^{(k)}(\lambda)|. 
        \label{eqn:M-flat-ineq}
    \end{align}
    For the first term, we note that if $|X_1| \leq B_n$, then $X^\flat_1 = X_1$. Therefore 
    \begin{equation}
        |M^{(k)}(\lambda) - (M^\flat)^{(k)}(\lambda)| \leq \E\left|(X^\flat_1)^{k}e^{\lambda X^\flat_1} - X_1^{k}e^{\lambda X_1}\right| 
    \leq\E|X_1|^{k}e^{C_nX_1}\1\{|X_1| \geq B_n\}.
        \label{eqn:mgf-cutoff-eq-1}
    \end{equation}
    Then, we use the generalized H\"older inequality (Proposition~\ref{prop:generalized-holder}) with exponents $(3, 3, 3)$, and then apply Proposition~\ref{prop:sub-Gaussian-props} to get 
    \begin{align}
        \E\left|X_1^{k}e^{C_nX_1}\1\{|X_1| \geq B_n\}\right|&\leq\{\E|X_1^{3k}|M(3C_n)\P(X_1 \geq B_n)\}^{1/3} \notag \\
        &\leq \left\{2\Xi^{3k}\Gamma\left(\frac{3k}{2}+1\right)\exp\left(\frac92C_n^2\Xi^2-\frac{B_n^2}{2\Xi^2}\right)\right\}^{1/3}. \label{eqn:mgf-cutoff-eq-2}
    \end{align}
    Now note that since $\alpha < \beta$, 
    \begin{equation}
\exp\left(\frac92C_n^2\Xi^2-\frac{B_n^2}{2\Xi^2}\right) \leq  \exp\left(\frac92\Xi^2(\log n)^{2\alpha}-\frac{(\log n)^{2\beta}}{2\Xi^2}\right)
        \label{eqn:exp-C_n-B_n}
    \end{equation}
    which decays faster than any polynomial; in particular it is $o(n^{-2})$. 
    Thus, putting (\ref{eqn:mgf-cutoff-eq-1}), (\ref{eqn:mgf-cutoff-eq-2}), and (\ref{eqn:exp-C_n-B_n}) together shows that 
    \begin{equation}
        |M^{(k)}(\lambda) - (M^\flat)^{(k)}(\lambda)| = o(n^{-1/2}).
        \label{eqn:mgf-cutoff-eq-3}
    \end{equation}
    The bound for the sample analogue of (\ref{eqn:mgf-cutoff-eq-3}) follows the same strategy, but is slightly more involved. To begin, we note that if, $|X_j| \leq B_n$ for all $j \in [n]$, then $X^\flat_j = X_j$ so  $\bar X^\flat = \bar X$. Therefore 
    \begin{multline}
        \E |(M_n^\flat)^{(k)}(\lambda) - M_n^{(k)}(\lambda)|\leq \frac1n\sum_{i=1}^n \E|(X^\flat_i - \bar X^\flat)^ke^{\lambda (X^\flat_i - \bar X^\flat)} - (X_i - \bar X)^ke^{\lambda (X_i - \bar X)}| \\ = \frac1n\sum_{i=1}^n\E\left|\left\{(X^\flat_i - \bar X^\flat)^ke^{\lambda (X^\flat_i - \bar X^\flat)} - (X_i - \bar X)^ke^{\lambda (X_i - \bar X)}\right\} \1\{\exists j:|X_j| > B_n\}\right|.
        \label{eqn:emgf-cutoff-1}
    \end{multline}
    Then the triangle inequality gives us two terms to bound:
    \begin{multline}
        \frac1n\sum_{i=1}^n\E\left|\left\{(X^\flat_i - \bar X^\flat)^ke^{\lambda (X^\flat_i - \bar X^\flat)} - (X_i - \bar X)^ke^{\lambda (X_i - \bar X)}\right\} \1\{\exists j:|X_j| > B_n\}\right| \\
        \leq \frac1n\sum_{i=1}^n \E\left|(X^\flat_i - \bar X^\flat)^ke^{\lambda (X_i^\flat - \bar X^\flat)}\1\{\exists j: |X_j| > B_n\}\right|\\  + \frac1n\sum_{i=1}^n \E\left|(X_i - \bar X)^ke^{\lambda (X_i - \bar X)}\1\{\exists j: |X_j| > B_n\}\right|
        \label{eqn:emgf-cutoff-2}
    \end{multline}
    The first one can be dealt with by evaluating the expectation, using a union bound, Chernoff's inequality, and then (\ref{eqn:exp-C_n-B_n}):
    \begin{align}
        \frac1n\sum_{i=1}^n \E\left|(X^\flat_i - \bar X^\flat)^ke^{\lambda (X_i^\flat - \bar X^\flat)}\1\{\exists j: |X_j| > B_n\}\right|  
        &\leq \frac1n\sum_{i=1}^n \E\left|(2B_n)^ke^{2C_nB_n}\1\{\exists j: |X_j| > B_n\}\right| \notag \\
        &= \frac1n\sum_{i=1}^n (2B_n)^ke^{2C_nB_n}\P(\exists j: |X_j| > B_n) \notag\\
        &\leq \frac1n\sum_{i=1}^n (2B_n)^ke^{2C_nB_n}n\P(|X_1| > B_n)  \notag \\&\leq(2B_n)^ke^{2C_nB_n}n\exp\left(-\frac{B_n^2}{2\Xi^2}\right) = o(n^{-1/2}).
        \label{eqn:emgf-cutoff-3}
    \end{align}
    Next, we move onto the second term of (\ref{eqn:emgf-cutoff-2}). By the generalized H\"older inequality and then a union bound,
    \begin{align}
        \frac1n\sum_{i=1}^n \ &\E\left|(X_i - \bar X)^ke^{\lambda (X_i - \bar X)}\1\{\exists j: |X_j| > B_n\}\right| \notag \\ &\leq \frac1n\sum_{i=1}^n \{\E(|X_i - \bar X|^{3k})\E[e^{3\lambda(X_1 - \bar X)}]\P(\exists j: |X_j| > B_n)\}^{1/3} \notag\\
        &\leq \frac1n\sum_{i=1}^n \{\E(|X_i - \bar X|^{3k})\E[e^{3\lambda(X_1 - \bar X)}]n\P(|X_1| > B_n)\}^{1/3}
        \label{eqn:emgf-cutoff-4}
    \end{align}
    Now note that $\bar X$ is sub-Gaussian with parameter $n^{-1}\xi_*$, so $X_1 - \bar X$ is sub-Gaussian with parameter $(\xi_*^2 + n^{-1}\xi_*^2)^{1/2} \leq \sqrt 2 \xi_* \leq \sqrt 2 \Xi$. Therefore by Proposition~\ref{prop:sub-Gaussian-props}(c), $\E[e^{3\lambda(X_1 - \bar X)}] \leq \exp(9\lambda^2\Xi^2)$. Then, applying Chernoff's inequality to (\ref{eqn:emgf-cutoff-4}), using the fact that $|\lambda| \leq C_n$, and then finally using (\ref{eqn:exp-C_n-B_n}), we have 
    \begin{multline}
         \frac1n\sum_{i=1}^n \{\E(|X_i - \bar X|^{3k})\E[e^{3\lambda(X_1 - \bar X)}]n\P(|X_1| \geq B_n)\}^{1/3} \\ \leq   \frac1n\sum_{i=1}^n\{\E(|X_i - \bar X|^{3k})\}^{1/3}\exp(3\lambda^2\Xi^2)\exp\left(-\frac{B_n^2}{6\Xi^2}\right) \\ 
 \leq \frac1n\sum_{i=1}^n 2^{k/2}\Xi^k\{\Gamma(3k/2+1)\}^{1/3}\exp\left(3C_n^2\Xi^2 - \frac{B_n^2}{6\Xi^2}\right) 
 = o(n^{-1/2}).
 \label{eqn:emgf-cutoff-5}
    \end{multline}
    Combining (\ref{eqn:emgf-cutoff-1}), (\ref{eqn:emgf-cutoff-2}), (\ref{eqn:emgf-cutoff-3}), (\ref{eqn:emgf-cutoff-4}), and (\ref{eqn:emgf-cutoff-5}) shows that $\E|(M_n^\flat)^{(k)}(\lambda) - M_n^{(k)}(\lambda)| = o(n^{-1/2})$. Then Markov's inequality yields 
    \begin{equation}
        \P(|(M_n^\flat)^{(k)}(\lambda) - M_n^{(k)}(\lambda)| > n^{-1/2})  \leq \frac{2|(M_n^\flat)^{(k)}(\lambda) - M_n^{(k)}(\lambda)|}{n^{-1/2}} = o(1).
        \label{eqn:emgf-cutoff-6}
    \end{equation}
    The parts are in place to conclude. Since the $X^\flat_i$'s are bounded by $(\log n)^\beta$, they satisfy the hypotheses of Proposition~\ref{prop:bounded-convergence-lemma}. Therefore we may apply it to the middle term of (\ref{eqn:M-flat-ineq}). As we have shown before, the first and third terms can be handled by (\ref{eqn:mgf-cutoff-eq-3}) and (\ref{eqn:emgf-cutoff-6}), respectively. Thus, 
    \[
    |M_n^{(k)}(\lambda) - M^{(k)}(\lambda)| \leq O_p(n^{-1/2})+O_p(n^{-1/2 + \eps}) + o(n^{-1/2})=O_p(n^{-1/2 + \eps}),    \]
    which completes the proof.
\end{proof}
\subsubsection{Consequences of Theorem~\ref{prop:sub-gaussian-convergence-lemma}}
We now address the corollaries of Theorem~\ref{prop:sub-gaussian-convergence-lemma}.
\begin{proof}[Proof of Corollary~\ref{cor:cgf-unif-conv}]
    Let $\eps > 0$. According to Jensen's inequality, $M(\lambda) = \E[e^{\lambda X}] \geq e^{\lambda \E X} = 1$, and $M_n(\lambda) = \frac1n \sum_{i=1}^n e^{\lambda (X_i - \bar X)} \geq e^{\lambda \frac1n\sum_{i=1}^n (X_i - \bar X)} = 1$. Then, by the Cauchy-Schwarz inequality and then the sub-Gaussianity of $X$,
    \begin{align}
        M^{(k)}(\lambda) &= \E[X^ke^{\lambda X}] \leq \{\E X^{2k} M(2\lambda)\}^{1/2} \notag\\ &\leq \{\E X^{2k} \exp(2\lambda^2\Xi^2)\}^{1/2} \leq  \{\E X^{2k} \exp(2C_n^2\Xi^2)\}^{1/2}\notag \\
        &= \{\E X^{2k} \exp(2(\log n)^{2\alpha}\Xi^2)\}^{1/2} = o(n^{\eps/2})
        \label{eqn:M-k-ineq}
    \end{align}
    where the last step uses $2\alpha < 1$ so that $(\log n)^{2\alpha - 1} \to 0$. Now by a direct computation we have 
    \[
    \psi^{(k)}(\lambda) = \frac{M^{(k)}}{M} - \sum_{j = 1}^{k-1} {k - 1 \choose j- 1} \psi^{(j)}(\lambda)\frac{M^{(k - j)}(\lambda)}{M(\lambda)},
    \]
    and similarly
    \[
    \psi^{(k)}_n(\lambda) = \frac{M^{(k)}_n(\lambda)}{M_n(\lambda)} - \sum_{j = 1}^{k-1} {k - 1 \choose j- 1} \psi^{(j)}_n(\lambda)\frac{M^{(k - j)}_n(\lambda)}{M_n(\lambda)}.
    \]
    Also, let us observe that, because $\eps > 0$ is arbitrary in Theorem~\ref{prop:sub-gaussian-convergence-lemma}, it holds that 
    \begin{equation}
        \sup_{|\lambda| \leq C_n} |M_n^{(k)}(\lambda) - M^{(k)}(\lambda)| = O_p(n^{-1/2 + \eps/2}).
        \label{eqn:sub-gaussian-convergence-app}
    \end{equation}
    The remainder of the proof goes by induction on $k$. \\
    \textit{Base case}: By the mean-value theorem applied to $x \mapsto \log x$, 
    \[
    |\psi_n(\lambda) - \psi(\lambda)| \leq \tau^{-1}|M_n(\lambda) - M(\lambda)|
    \]
    where $\tau \in [M_n(\lambda) \wedge M(\lambda), M_n(\lambda) \vee M(\lambda)]$. But $M_n(\lambda) \wedge M(\lambda) \geq 1$, so $\tau \geq 1$ and actually \[
    |\psi_n(\lambda) - \psi(\lambda)| \leq |M_n(\lambda) - M(\lambda)|.
    \]
    \textit{Inductive step:} Suppose the claim holds for any $j < k$. Then we have 
    \begin{multline*}
        |\psi^{(k)}(\lambda) - \psi^{(k)}_n(\lambda)| \\ \leq \left|\frac{M^{(k)}_n(\lambda)}{M_n(\lambda)} - \frac{M^{(k)}(\lambda)}{M(\lambda)}\right| + \sum_{j = 1}^{k-1} {k - 1 \choose j- 1} \left|\psi^{(j)}_n(\lambda)\frac{M^{(k - j)}_n(\lambda)}{M_n(\lambda)} - \psi^{(j)}(\lambda)\frac{M^{(k - j)}(\lambda)}{M(\lambda)}\right|.
    \end{multline*}
    For the rest term, since $M(\lambda) \geq 1$ and $M_n(\lambda) \geq 1$, we have  
    \begin{align}
\left|\frac{M^{(k)}_n(\lambda)}{M_n(\lambda)} - \frac{M^{(k)}(\lambda)}{M(\lambda)}\right| &\leq \left|\frac{M_n^{(k)}(\lambda) - M^{(k)}(\lambda)}{M_n(\lambda)}\right| + \left|\frac{M^{(k)}(\lambda)\{M_n(\lambda) - M(\lambda)\}}{M(\lambda)M_n(\lambda)}\right| \notag \\
&\leq \left|M_n^{(k)}(\lambda) - M^{(k)}(\lambda)\right| + \left|M^{(k)}(\lambda)\{M_n(\lambda) - M(\lambda)\}\right|.
\label{eqn:cor1-mgf-triangle}
    \end{align}
    The first term is $O_p(n^{-1/2 + \eps})$ by (\ref{eqn:sub-gaussian-convergence-app}), and the second is $O_p(n^{-1/2 + \eps})$ by (\ref{eqn:M-k-ineq}) and (\ref{eqn:sub-gaussian-convergence-app}) again. Meanwhile, for each term in the summation, we have 
\begin{multline*}
    \left|\psi^{(j)}_n(\lambda)\frac{M^{(k - j)}_n(\lambda)}{M_n(\lambda)} - \psi^{(j)}(\lambda)\frac{M^{(k - j)}(\lambda)}{M(\lambda)}\right| \\ \leq \left|\psi_n^{(j)} - \psi^{(j)}\right|\left|\frac{M^{(k - j)}_n(\lambda)}{M_n(\lambda)}\right| + |\psi^{(j)}|\left|\frac{M^{(k - j)}_n(\lambda)}{M_n(\lambda)} - \frac{M^{(k - j)}(\lambda)}{M(\lambda)}\right|.
\end{multline*}
The first term is $O_p(n^{-1/2 + \eps})$ by the inductive hypothesis and (\ref{eqn:M-k-ineq}). The second follows by similar reasoning to (\ref{eqn:cor1-mgf-triangle}). 
\end{proof}
\begin{proof}[Proof of Proposition~\ref{prop:L_n-cgf-bound}]
    
We claim first that the following formula holds for all $\lambda \in \R$:
    \begin{equation}
        L(\lambda) = 2\int_0^1 (1 - x)\psi''(\lambda x) \, dx, \quad L_n(\lambda) = 2\int_0^1 (1 - x)\psi''_n(\lambda x) \, dx.  \label{eqn:L-integral-rep}
    \end{equation}
    To see this, note that $\psi(0) = \psi'(0) = \E X= 0$ so by Taylor's theorem and then a change of variable,
    \[
    \psi(\lambda) = \int_0^\lambda (\lambda - x)\psi''(x) \, dx = \lambda^2\int_0^1 (1 - x) \psi''(\lambda x) \, dx 
    \]
    so the claim follows by recalling $L(\lambda) = 2\lambda^{-2}\psi(\lambda)$. Meanwhile, since $\psi''(0) = \sigma^2$, we have 
    \[
    L(0) = 2\int_0^1 (1 - x) \psi''(0) \, dx.
    \]
    Identical reasoning holds for $L_n$, noting that $\psi'_n(0) = 0$ too. Now since $\psi^{(k)}$ is twice continuously differentiable, we may differentiate under the integral sign to obtain
    \[
    L^{(k)}(\lambda) = 2\int_0^1 (1 - x)x^k \psi^{(k+2)}(\lambda x) \, dx , \quad L_n^{(k)}(\lambda) = 2 \int_0^1(1 - x)x^k \psi^{(k+2)}_n(\lambda x)\, dx.
    \]
    Then it follows that, for any $\lambda \in [-C_n, C_n]$, 
    \begin{align*}
         |L_n^{(k)}(\lambda) - L^{(k)}(\lambda)|&\leq \int_0^1 (1 - x)x^k|\psi^{(k+2)}(\lambda x) - \psi_n^{(k+2)}(\lambda x)| \, dx \\ &\leq 2\sup_{t \in [-C_n, C_n]} |\psi^{(k+2)}(t) - \psi^{(k+2)}_n(t)| \int_0^1 (1 - x)x^k \, dx \\
        &\leq \sup_{t \in [-C_n, C_n]} |\psi^{(k+2)}(t) - \psi^{(k+2)}_n(t)|
    \end{align*}
    where the last step follows because $\int_0^1 (1-x)x^k \, dx = (k+1)^{-1}(k+2)^{-1} \leq 1/2$.
\end{proof}
\begin{proof}[Proof of Theorem~\ref{prop:second-consistency}]
Define $\tilde \xi_n^2 := \sup_{|\lambda|\leq C_n} L(\lambda)$. We then have 
\begin{align*}
    |\hat \xi^2_n - \xi^2_*| &\leq |\hat \xi^2_n - \tilde \xi^2_n| + \xi_*^2 - \tilde \xi_n^2 \\
    &\leq |\hat \xi^2_n - \tilde \xi^2_n| + \sup_{\lambda \in \R} L(\lambda) - \sup_{\lambda \in [-C_n, C_n]} L(\lambda) \\
    &= |\hat \xi^2_n - \tilde \xi^2_n| + \left\{ \sup_{|\lambda| > C_n} L(\lambda) - \sup_{|\lambda| \leq C_n} L(\lambda)\right\} \vee 0 \\
    &= |\hat \xi^2_n - \tilde \xi_n^2| + \delta(C_n) \vee 0.
\end{align*}
    Since $L$ and $L_n$ are continuous, there is $\tilde \lambda_n \in [-C_n, C_n]$ such that $\tilde \xi_n = L(\tilde \lambda_n)$, and $\lambda^*_n \in [-C_n, C_n]$ such that $\hat \xi_n^2 = L_n(\lambda^*_n)$. Then 
    \[
    \hat \xi_n^2 - \tilde \xi_n^2 \leq L_n(\lambda^*_n) - L(\lambda^*_n) \leq \sup_{\lambda \in [-C_n, C_n]} |L_n(\lambda) - L(\lambda)|
    \]
    and 
    \[
    \hat \xi_n^2 - \tilde \xi_n^2 \geq L_n(\tilde \lambda_n) - L(\tilde \lambda_n) \geq -\sup_{\lambda \in [-C_n, C_n]}|L_n(\lambda) - L(\lambda)| 
    \]
    so that $|\hat \xi_n^2 - \tilde \xi^2_n| \leq \sup_{|\lambda| \leq C_n} |L_n(\lambda) - L(\lambda)|$. Therefore 
    \begin{equation}
        |\hat \xi^2_n - \xi_*^2| \leq \sup_{|\lambda| \leq C_n} |L_n(\lambda) - L(\lambda)| + \delta(C_n) \vee 0.
        \label{eqn:L-delta-bound}
    \end{equation}
    (a) By Proposition~\ref{prop:L_n-cgf-bound}(b), we have $\sup_{|\lambda| \leq C_n} |L_n(\lambda) - L(\lambda)| = o_p(1)$ for any $\eps > 0$. Since $C_n \to \infty$, $\delta(C_n) \to 0$. \\
    (b) Whenever $n$ is large enough that $C_n > C_0$, we have $\delta(C_n) \leq 0$, so the claim follows from (\ref{eqn:L-delta-bound}). \\
    (c) Make $r > 0$ so small that $\sup_{|\lambda| > C_0} L(\lambda) < \sup_{|\lambda| \leq C_0} L(\lambda) - r$. It follows that $\xi_*^2 = L(\lambda^*)$ for some $\lambda^* \in [-C_0, C_0]$. Then, since $L_n(\lambda^*) \xrightarrow{\mathrm{a.s.}} L(\lambda^*)$, if $n$ is large enough, we have on one hand
    \[
    \sup_{|\lambda| \leq C_0} L_n(\lambda) \geq L_n(\lambda^*) \geq L(\lambda^*) - \frac r2
    \]
    and on the other
    \[
    \sup_{C_0 < |\lambda| \leq C_n} L_n(\lambda) \leq \sup_{|\lambda| \leq C_n} |L_n(\lambda) - L(\lambda)| + \sup_{C_0 < |\lambda|} L(\lambda) \leq o_p(1) + \xi_*^2 - r  \leq \xi_*^2 - \frac{r}{2}
    \]
    so that
    \[
    \sup_{|\lambda| \leq C} L_n(\lambda) > \sup_{C < |\lambda| \leq C_n} L_n(\lambda).
    \]
    It follows that none of $(C, C_n]$ can be a maximizer. Thus with high probability, the maximizers of $L_n$ contained in $[-C_0, C_0]$, so it actually holds that 
    \[
    |\hat \xi^2_n - \tilde \xi^2_n| \leq \sup_{|\lambda| \leq C_0} |L_n(\lambda) - L(\lambda)|.
    \]    Proposition~\ref{prop:L_n-cgf-bound}(a) then applies to yield $\sqrt n(\hat \xi_n^2 - \tilde \xi^2_n) = O_p(1)$. Since the hypothesis of (c) clearly implies (b), it holds again that $\delta(C_n) \vee 0 = 0$ for large enough $n$, and the claim follows from (\ref{eqn:L-delta-bound}).
\end{proof}
\subsubsection{Proof of Proposition~\ref{prop:ptwise-asymptotic-normality}}
In this section, we show the asymptotic normality of $\hat \xi_n^2$. We shall first need a lemma: 
\begin{proposition}
\label{prop:maximizer-root-n}
    Suppose that $\limsup_{|\lambda| \to \infty} L(\lambda) < \xi_*^2$ and that $\inf \{L''(\lambda^*): \lambda^* \in \Lambda^*\} < 0$. Define $\pi: \R \to \Lambda^*$ by the rule 
    \[
    \pi(\lambda) := \argmin_{\lambda^* \in \Lambda^*} |\lambda - \lambda^*|.
    \]
    Then, for any sequence of maximizers $\lambda^*_n \in \Lambda^*_n$, we have $\lambda^*_n - \pi(\lambda^*_n) = O_p(n^{-1/2})$. 
\end{proposition}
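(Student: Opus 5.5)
The plan is to localize, then prove consistency of $\lambda^*_n$ toward $\Lambda^*$, and finally upgrade to the $n^{-1/2}$ rate via a first-order condition. I read the curvature hypothesis as $L''(\lambda^*) \le -2c < 0$ for every $\lambda^* \in \Lambda^*$, i.e.\ $\sup\{L''(\lambda^*): \lambda^* \in \Lambda^*\} < 0$, so that every maximizer is nondegenerate. As literally written ($\inf < 0$) the hypothesis would only ask that one maximizer have negative curvature, which cannot control maximizers of $L_n$ drifting toward a flat maximizer; so the argument needs the reading above.

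\textbf{Step 1 (localization and finiteness of $\Lambda^*$).} Since $\limsup_{|\lambda|\to\infty} L(\lambda) < \xi_*^2$, there are $C_0$ and $r>0$ with $\sup_{|\lambda|>C_0} L(\lambda) < \xi_*^2 - r$. Hence $\Lambda^* \subset [-C_0,C_0]$ is compact and nonempty, and $\pi$ is defined (ties broken arbitrarily). Each $\lambda^*\in\Lambda^*$ is an interior critical point, so $L'(\lambda^*)=0$, and $L''(\lambda^*)<0$ makes it a strict local maximum. Therefore $\Lambda^*$ consists of isolated points of a compact set and is finite.

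Exactly as in the proof of Theorem~\ref{prop:second-consistency}(c), using Proposition~\ref{prop:L_n-cgf-bound}(b) with $k=0$, with probability tending to one every $\lambda^*_n\in\Lambda^*_n$ lies in $[-C_0,C_0]$. I also take $C_0$ slightly larger than $\max_{\Lambda^*}|\lambda|$ and use $C_n > C_0$ eventually.

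\textbf{Step 2 (consistency).} On the event of Step 1, Proposition~\ref{prop:L_n-cgf-bound}(a) gives
\[
L(\lambda^*_n) \ge L_n(\lambda^*_n) - O_p(n^{-1/2}) \ge L_n(\pi(0)) - O_p(n^{-1/2}) \ge \xi_*^2 - O_p(n^{-1/2}).
\]
For any $\eta>0$, the function $L$ is continuous on the compact set $\{|\lambda|\le C_0 : \operatorname{dist}(\lambda,\Lambda^*)\ge\eta\}$, so its maximum there is strictly below $\xi_*^2$. It follows that $d_n := |\lambda^*_n - \pi(\lambda^*_n)| \xrightarrow{p} 0$. In particular, with probability tending to one, $\lambda^*_n$ lies in a small neighborhood $U$ of some $\lambda^*\in\Lambda^*$ with $U$ interior to $(-C_n,C_n)$. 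Then $\lambda^*_n$ is an interior maximizer of the differentiable function $L_n$, so $L_n'(\lambda^*_n)=0$.

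\textbf{Step 3 (rate via the first-order condition).} The quadratic-growth bound $L(\lambda^*_n)\le \xi_*^2 - c d_n^2$ would only give $d_n=O_p(n^{-1/4})$, so instead I use derivatives. By continuity of $L''$ and finiteness of $\Lambda^*$, choose the neighborhoods $U$ small enough that $L''\le -c$ on each of them. Since $L'(\pi(\lambda^*_n))=0$ and $L_n'(\lambda^*_n)=0$, the mean value theorem gives some $\tilde\lambda\in U$ with
\[
c\, d_n \le |L''(\tilde\lambda)|\, d_n = |L'(\lambda^*_n) - L'(\pi(\lambda^*_n))| = |L'(\lambda^*_n) - L_n'(\lambda^*_n)| \le \sup_{|\lambda|\le C_0}|L_n'(\lambda)-L'(\lambda)| .
\]
The right-hand side is $O_p(n^{-1/2})$ by Proposition~\ref{prop:L_n-cgf-bound}(a) with $k=1$. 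This gives $d_n=O_p(n^{-1/2})$.

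The main obstacle is Step 2's reduction to an interior critical point lying in a region of uniformly negative curvature. This needs the correct reading of the curvature hypothesis and the finiteness of $\Lambda^*$; without finiteness, a uniform neighborhood on which $L''\le -c$ would have to come from uniform continuity of $L''$ on $[-C_0,C_0]$ together with a uniform curvature bound. Once that is in place, the rest is mechanical.
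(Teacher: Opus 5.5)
Your argument is correct and follows the same route as the paper: localize using the $\limsup$ condition, get consistency from a compactness argument on $[-C_0,C_0]$ minus neighborhoods of $\Lambda^*$, then combine $L_n'(\lambda^*_n)=0$ with the mean value theorem. The one difference is that you apply the mean value theorem to the deterministic $L'$ and bound $|L'(\lambda^*_n)-L_n'(\lambda^*_n)|$ by the uniform $k=1$ error, whereas the paper expands $L_n'$ around $\pi(\lambda^*_n)$ and must also show $L_n''(\tilde\lambda_n)$ stays away from zero (via the $k=2$ uniform convergence and uniform continuity of $L''$). Your version is therefore slightly leaner, and it also justifies the interiority of $\lambda^*_n$, which the paper uses silently.

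Your reading of the curvature hypothesis as $\sup\{L''(\lambda^*):\lambda^*\in\Lambda^*\}<0$ is exactly what the paper's own step ``$L_n''(\tilde\lambda_n)$ is bounded away from zero'' needs. It coincides with the stated $\inf$ version in the paper's only application, where $\Lambda^*=\{\lambda^*\}$.
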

\begin{proof}
    Fix $\eps > 0$. Since $\limsup_{|\lambda| \to \infty} L(\lambda) < \xi_*^2$, we may choose $C, \delta_0 > 0$ such that 
    \[
    \sup_{|\lambda| > C} L(\lambda) < \xi_*^2 - 2\delta_0.
    \]
    Then $\Lambda^* \subset [-C, C]$ is bounded and hence compact, so that  $\pi$ makes sense. Furthermore, 
    the set \[
    A_\eps := [-C, C] - \bigcup_{\lambda^* \in \Lambda^*} \{\lambda: |\lambda-\lambda^*| \leq \eps\}
    \]
    is compact and contains no maximizer of $L$, so $L$ attains some maximum $\xi_A^2 < \xi_*^2$. Put $\delta := (\xi_*^2 - \xi_A^2)/2$, and define $E_n := \{|\lambda^*_n - \lambda^*| > \eps\}$. On $E_n$, if $\lambda^*_n \in [-C, C]$ then $\lambda^*_n \in A_\eps$ so $L(\lambda^*_n) \leq \xi_A^2$. If $\lambda^*_n \not\in [-C, C]$ then $L(\lambda^*_n) \leq \sup_{|\lambda| > C} L(\lambda) \leq \xi_A^2$. Thus we have $L(\lambda^*_n) \leq \xi_A^2$ whenever $E_n$ holds. It then follows that 
    \[
    \hat \xi_n^2 = L_n(\lambda^*_n) \leq L(\lambda^*) + \sup_{|\lambda| \leq C_n} |L_n(\lambda) - L(\lambda)|
    \]
    and 
    \[
    \hat \xi_n^2 = L_n(\lambda^*_n) \geq L_n(\lambda^*) \geq L(\lambda^*) - \sup_{|\lambda| \leq C_n} |L_n(\lambda) - L(\lambda)| 
    \]
    so that 
    \[
    \xi_*^2 - \xi_A^2 \leq 2\sup_{|\lambda| \leq C_n} |L_n(\lambda) - L(\lambda)|.
    \]
We thus have $E_n \subset \{\sup_{|\lambda| \leq C_n} |L_n(\lambda) - L(\lambda)| \geq \delta\}$. The probability of the second event vanishes according to Proposition~\ref{prop:L_n-cgf-bound}(b), so we conclude $\lambda^*_n - \pi(\lambda^*_n) = o_p(1)$.

    We now deduce the rate of convergence.
    By the mean-value theorem, there is $\tilde \lambda_n$ between $\lambda^*_n$ and $\pi(\lambda^*_n)$ such that 
    \[
    0 = L_n'(\lambda^*_n) = L'_n(\pi(\lambda^*_n)) + (\lambda^*_n - \pi(\lambda^*_n))L''_n(\tilde \lambda_n),
    \]
    and by the triangle inequality, we have 
    \[
    |L''_n(\tilde \lambda_n) - L''(\pi(\lambda^*_n))| \leq |L''_n(\tilde \lambda_n) - L''(\tilde \lambda_n)| + |L''(\tilde \lambda_n) - L''(\pi(\lambda^*_n))|. 
    \]
    The first term is $o_p(1)$ by Proposition~\ref{prop:L_n-cgf-bound}(a). By the boundedness of $\Lambda^*$ and the fact that $\tilde \lambda_n - \pi(\lambda^*_n) = o_p(1)$, we may restrict $L''$ to a compact interval containing all of the $\tilde \lambda_n$'s and $\Lambda^*$. On this interval, $L''$ must be uniformly continuous, so it follows that the second term is $o_p(1)$ as well. Thus we have $L''_n(\tilde \lambda_n) - L''(\pi(\lambda^*_n)) = o_p(1)$. Since $\inf \{L''(\lambda^*): \lambda^* \in \Lambda^*\} < 0$,  provided $n$ be large enough, $L''_n(\tilde \lambda^*_n)$ will be bounded away from zero. Therefore, we have 
    \[
    |\pi(\lambda^*_n)-\lambda^*_n|  = \left|\frac{L'_n(\pi(\lambda^*_n))}{L''_n(\tilde \lambda_n)}\right| = \left|\frac{L'_n(\pi(\lambda^*_n)) - L'(\pi(\lambda^*_n))}{L''_n(\tilde \lambda_n)}\right|  \leq \sup_{\lambda^* \in \Lambda^*} \frac{|L'_n(\lambda^*) - L'(\lambda^*)|}{|L''_n(\tilde \lambda_n)|}.
    \]
    Since $\Lambda^*$ is bounded, Proposition~\ref{prop:L_n-cgf-bound}(a) says that the convergence of $L'_n$ is uniform and $O_p(n^{-1/2})$. This proves the claim.  
\end{proof}
We are now ready to prove a central limit theorem for $\hat \xi_n^2$. 
\begin{proof}[Proof of Proposition~\ref{prop:ptwise-asymptotic-normality}]
    Let $\lambda^*$ be the unique maximizer of $L$. Since there is $C_0$ such that $\delta(C_0) < 0$, we have that
    \[
    \sup_{|\lambda|  \geq C_0 \vee \lambda^*} L(\lambda) < \sup_{|\lambda| \leq C_0 \vee \lambda^*} L(\lambda) = \xi_*^2
    \]
    and therefore the hypotheses of Proposition~\ref{prop:maximizer-root-n} hold. 
     Then, by Taylor's theorem there is $\tilde \lambda_n$ between $\lambda_n^*$ and $\lambda^*$ such that 
    \begin{align*}
         L_n(\lambda^*_n) - L_n(\lambda^*) &= (\lambda^*_n - \lambda^*) L'_n(\lambda^*) + \frac{(\lambda_n^* - \lambda^*)^2}{2}L''_n(\tilde \lambda_n) \\
         &= O_p(n^{-1/2})O_p(n^{-1/2}) + O_p(n^{-1})O_p(1) = O_p(n^{-1})
    \end{align*}
    by Proposition~\ref{prop:L_n-cgf-bound}(a). 
    Therefore, it holds that 
    \begin{align*}
        \sqrt n(\hat \xi_n^2 - \xi_*^2) &= \sqrt n(L_n(\lambda^*) - L(\lambda^*))  + \sqrt n(L_n(\lambda^*_n) - L_n(\lambda^*)) \\ &=  \sqrt n(L_n(\lambda^*) - L(\lambda^*)) + O_p(n^{-1/2}) \\
        &= \sqrt n(L_n(\lambda^*) - L(\lambda^*)) + o_p(1).
    \end{align*}
    Now let $\lambda$ be fixed. By the central limit theorem, we have 
    \[
    \sqrt n\begin{pmatrix}
        \tilde M_n(\lambda) - M(\lambda) \\ \lambda\bar X
    \end{pmatrix} \xrightarrow{d} N\left(0, \Sigma\right)
    \]
    where 
    \[
    \Sigma = \begin{pmatrix}
        M(2\lambda) - M(\lambda)^2 & \lambda M'(\lambda) \\ \lambda M'(\lambda) & \lambda^2\sigma^2
    \end{pmatrix}.
    \]
    Now since 
    \[
    L_n(\lambda) = \frac{2}{\lambda^2}\left(\log \tilde M_n(\lambda) - \lambda\bar X\right)
    \]
    we have by the Delta method that 
    \[
    \sqrt n (L_n(\lambda) - L(\lambda)) \xrightarrow{d} N(0, V(\lambda))
    \]
    where 
    \[
    V(\lambda) = \frac{4}{\lambda^4}\begin{pmatrix}
        1/M(\lambda) \\ -1
    \end{pmatrix}^\top \Sigma \begin{pmatrix}
        1/M(\lambda) \\ 1
    \end{pmatrix} = \frac{4}{\lambda^4} \left(\frac{M(2\lambda)}{M(\lambda)^2} - 2\lambda \psi'(\lambda)+  \lambda^2\sigma^2 - 1\right).
    \]
\end{proof}
\subsection{Proofs for Section 3}
In order to simplify the proof of Proposition~\ref{prop:minimax-rates}, we first handle Proposition~\ref{prop:unif-supp-delta-inclusion}.
\begin{proof}[Proof of Proposition~\ref{prop:unif-supp-delta-inclusion}]
    Let $P$ be supported on $[-1, 1]$. Then, for any $\lambda \in \mathbb{R}$, 
    \[
    L(\lambda; P) \leq \frac{2}{\lambda^2} \log e^{ |\lambda|} \leq \frac{2}{|\lambda|}.
    \]

    Combining this with the fact that $L(0 ; P) = \sigma^2(P) \geq 0$, we have that, for any $C > 0$,
    \begin{align*}
    \delta_P(C) = \sup_{|\lambda| \geq C} L(\lambda; P) - \sup_{| \lambda | \leq C} L(\lambda; P) 
    \leq \sup_{|\lambda| \geq C} L(\lambda ; P) \leq \sup_{|\lambda| \geq C} \frac{2}{|\lambda|} \leq \frac{2}{C}.
    \end{align*}
    
   The first claim of the Proposition thus follows.

   Now suppose $\sigma^2(P) \geq \frac32\delta_0$. Let $C_1 := 8 \delta_0^{-1}$ and observe that
   \[
   \sup_{|\lambda| \geq C_1} L(\lambda; P) \leq \sup_{|\lambda| \geq C_1} \frac{4}{|\lambda|}  = \frac{4}{C_1} \leq \frac{\delta_0}{2}.
   \]

    Therefore, we have 
    \begin{align*}
    \delta_P(C_1) &= \sup_{|\lambda| \geq C_1} L(\lambda ; P) - \sup_{|\lambda| \leq C_1} L(\lambda ; P) \leq \frac{\delta_0}{2} - L(0 ; P) \\
    &= \frac{\delta_0}{2} - \sigma^2(P) \leq - \delta_0,
    \end{align*}
as desired. The Proposition thus follows. 
\end{proof}

Now we address the minimax lower bounds. In preparation for this, we shall need to grasp the sub-Gaussian parameter of symmetric three-point distributions. In some cases, the sub-Gaussian parameter can be computed explicitly: 
\begin{proposition}[\cite{atouani2025optimalsubgaussianvarianceproxy}, Theorem 3.1(a)]
\label{prop:3-point-sub-gauss}
    For $a, p > 0$, let $P := (1 - 2p)\delta_0 + p\delta_{-a} + p\delta_a$. If $p \in [1/6, 1/2)$, then $\xi_*^2(P) = \sigma^2(P) = 2pa^2$.
\end{proposition}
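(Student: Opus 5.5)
Since $\xi_*^2(P) = \sup_\lambda L(\lambda;P)$ by Proposition~\ref{prop:xi-L-char} and $L(0;P) = \sigma^2(P) = 2pa^2$, we already have $\xi_*^2(P) \geq 2pa^2$. The whole task is the reverse inequality $L(\lambda;P) \leq 2pa^2$ for every $\lambda \neq 0$. Since $M(\lambda) = 1 - 2p + 2p\cosh(\lambda a)$, this is equivalent to
\[
1 - 2p + 2p\cosh(x) \leq e^{p x^2} \quad \text{for all } x \in \R,
\]
where $x = \lambda a$. This reduction removes $a$ entirely, so we may take $a = 1$.

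I would prove this inequality by comparing Taylor coefficients, since both sides are entire even functions of $x$ with nonnegative coefficients in $x^{2k}$. The left side is $1 + \sum_{k \geq 1} \frac{2p}{(2k)!} x^{2k}$ and the right side is $\sum_{k \geq 0} \frac{p^k}{k!} x^{2k}$. The constant terms agree, so it suffices to show
\[
\frac{2p}{(2k)!} \leq \frac{p^k}{k!} \quad \text{for every } k \geq 1.
\]
Equivalently, with $c_k := \frac{2\, k!}{(2k)!}$, we need $c_k \leq p^{k-1}$.

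I would then check this coefficient inequality by cases and induction.
\begin{itemize}
\item For $k = 1$: $c_1 = 1 = p^0$, so equality holds. This is exactly the matching of the quadratic terms, i.e.\ $L(0) = \sigma^2$.
\item For $k = 2$: $c_2 = 4/24 = 1/6 \leq p$. This is precisely where the hypothesis $p \geq 1/6$ enters, and it is the binding constraint.
\item For $k \geq 2$ in general: the ratio $c_{k+1}/c_k = \frac{k+1}{(2k+1)(2k+2)} = \frac{1}{2(2k+1)}$ is at most $1/10 \leq 1/6 \leq p$. So by induction, $c_k \leq p^{k-1}$ implies $c_{k+1} \leq p^{k}$.
\end{itemize}
Summing the coefficient inequalities against $x^{2k} \geq 0$ gives the desired inequality for all real $x$. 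Hence $L(\lambda;P) \leq 2pa^2$ for every $\lambda$, and therefore $\xi_*^2(P) = 2pa^2 = \sigma^2(P)$.

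The only delicate step is identifying that the $k = 2$ (quartic) coefficient is the tight one, which is where the threshold $1/6$ comes from. Beyond that, the ratio bound makes all higher orders automatic. The condition $p < 1/2$ is used only to ensure that the weight $1 - 2p$ on the atom at $0$ is positive, so that $P$ is a genuine three-point distribution.
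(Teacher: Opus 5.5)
Your proof is correct and complete. The reduction to $1-2p+2p\cosh x \le e^{px^2}$ is right, the coefficient inequality $c_k = 2\,k!/(2k)! \le p^{k-1}$ holds, the ratio $c_{k+1}/c_k = 1/(2(2k+1))$ is right, and termwise domination against $x^{2k}\ge 0$ finishes the argument.

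The paper gives no proof of this statement. It imports it from Theorem 3.1(a) of \cite{atouani2025optimalsubgaussianvarianceproxy}. That work analyzes the function $\frac12\lambda^2\sigma^2 - \log \E e^{\lambda X}$ over the whole family of three-point laws, and also yields exact, if unwieldy, formulas for $\xi_*^2$ when $p<1/6$.

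Your Taylor-coefficient comparison gives a short, self-contained proof of exactly the case the paper needs. That case arises in Proposition~\ref{prop:minimax-rates}(b), where $p = q/2 \ge 1/4$. Your argument also makes the threshold transparent. The quartic comparison $p/12 \le p^2/2$ is precisely the kurtosis condition $\E X^4 \le 3\sigma^4$.

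The same expansion shows the threshold is sharp. Writing $L(\lambda) = 2a^2\{p + p(\tfrac{1}{12}-\tfrac{p}{2})\lambda^2a^2 + O(\lambda^4)\}$, one sees that $L(\lambda) > \sigma^2$ for small $\lambda \neq 0$ whenever $p<1/6$, so $\xi_*^2 > \sigma^2$ there.

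What your approach does not give is the value of $\xi_*^2$ below the threshold. That is what the cited work supplies, and the paper sidesteps it with the cruder bounds of Proposition~\ref{prop:three-point-computation}. Your remark that $p<1/2$ is inessential is also right. At $p=1/2$ the inequality reduces to $\cosh x \le e^{x^2/2}$, which your argument covers as well.
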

While \cite{atouani2025optimalsubgaussianvarianceproxy} gives exact expressions for the sub-Gaussian parameter of three-point distributions when $p \leq 1/6$, they are still rather difficult to work with. Instead, it will be easier to settle for an estimate of $\xi_*^2(P)$ via Proposition~\ref{prop:three-point-computation}. To simplify the proof, we first prove that the sub-Gaussian parameter scales like variance: 
\begin{proposition}
\label{prop:subgauss-scaling}
    Let $X \sim P$ be sub-Gaussian. If $aX \sim P_a$, then $\xi_*^2(P) = a^2\xi_*^2(P_a)$.
\end{proposition}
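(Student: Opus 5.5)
The plan is to reduce everything to an identity between the weighted cumulant generating functions of the two laws, and then take suprema using Proposition~\ref{prop:xi-L-char}. I will read the statement so that its conclusion $\xi_*^2(P) = a^2\xi_*^2(P_a)$ is the one proved. Concretely, $P$ and $P_a$ are two laws linked by a scaling by $a$, with $P$ the law of $a$ times a variable with law $P_a$. \textbf{This is a reinterpretation:} the literal hypothesis ``$X \sim P$ and $aX \sim P_a$'' gives the reverse orientation, $\xi_*^2(P_a) = a^2\xi_*^2(P)$, by the same argument. Throughout assume $a \neq 0$; for $a = 0$ both sides vanish.

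So let $Y \sim P_a$, set $Z := aY$ and take $P$ to be the law of $Z$. The first step is to note that sub-Gaussianity passes between the two laws. If $\E e^{\mu Y} \le e^{\mu^2\xi^2/2}$ for all $\mu$, then $\E e^{\lambda Z} = \E e^{(a\lambda)Y} \le e^{\lambda^2 a^2\xi^2/2}$. Conversely, $Y = Z/a$ transfers the bound back. Hence both laws are mean zero and sub-Gaussian, and Proposition~\ref{prop:xi-L-char} applies to each.

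The key computation is the functional identity
\[
L(\lambda; P) = a^2 L(a\lambda; P_a), \qquad \lambda \in \R.
\]
For $\lambda \neq 0$ it follows directly from the definition:
\[
\frac{2}{\lambda^2}\log \E e^{\lambda a Y} = a^2 \cdot \frac{2}{(a\lambda)^2}\log \E e^{(a\lambda)Y}.
\]
At $\lambda = 0$ it reads $\mathrm{Var}(Z) = a^2\mathrm{Var}(Y)$, which is also immediate. Alternatively, it follows from the $\lambda \neq 0$ case by the continuity of $L$ asserted in Proposition~\ref{prop:xi-L-char}.

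Finally, since $\lambda \mapsto a\lambda$ is a bijection of $\R$ when $a \neq 0$, taking suprema gives
\[
\xi_*^2(P) = \sup_{\lambda\in\R} L(\lambda;P) = a^2 \sup_{\mu \in \R} L(\mu; P_a) = a^2\xi_*^2(P_a),
\]
where the outer equalities are Proposition~\ref{prop:xi-L-char}. There is no real obstacle. The only point needing care is the bookkeeping of which law carries the factor $a$, and this bookkeeping is exactly what separates the printed orientation from the reverse one.
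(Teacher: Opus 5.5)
Your proof is correct and is essentially the paper's argument: the paper gets one inequality directly from the defining MGF bound and the other from the same reparametrized identity $L(\lambda/a; P_a) = a^2 L(\lambda; P)$ along a maximizing sequence, whereas you get both at once by taking suprema over the bijection $\lambda \mapsto a\lambda$. Your flagged reinterpretation is warranted, since under the literal hypothesis $aX \sim P_a$ the true conclusion is $\xi_*^2(P_a) = a^2\xi_*^2(P)$ (take $X \sim N(0,1)$ to see the printed version fail); this literal orientation is exactly what the paper's own lower-bound step derives, and it is what Proposition~\ref{prop:three-point-computation} and the rescaling in Proposition~\ref{prop:minimax-rates} rely on.
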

\begin{proof}
    That $\xi_*^2(P) \leq a^2\xi_*^2(P_a)$ is clear from the definition. For the lower bound, let $(\lambda_n)$ be a sequence such that  $L(\lambda_n; P) \to \sup_\lambda L(\lambda; P)$. It then follows that $L(\lambda_n/a; P_a) = a^2L(\lambda_n; P)$, so $ L(\lambda_n/a; P_a) \to a^2\xi_*^2(P)$, and thus $\xi_*^2(P_a) \geq a^2 \xi_*^2(P)$.
\end{proof}
\begin{proposition}
    
\label{prop:three-point-computation}
    For a real sequence $(a_n)$, let $P := (1 - \frac1n)\delta_0 + \frac{1}{2n}\delta_{a_n} + \frac{1}{2n} \delta_{-a_n}$. The following hold:
    \begin{enumerate}
        \item[(a)] $\frac{a_n^2}{2\log 2n} \leq \xi_*^2(P) \leq \frac{a_n^2}{\log n}$. 
        %\item[(b)] Let $\lambda^*$ be a $[0, \infty)$-maximizer of $L(\lambda; P)$; we have $\lambda^* \leq 2a_n^{-1}\log 2n$. 
        \item[(b)] Put $a_n := (\log n)^\gamma$ for $\gamma \in [0, 1/2]$. Then $\delta_P(C) \leq  r_\gamma(C)$ for all $C > 0$.
    \end{enumerate}
\end{proposition}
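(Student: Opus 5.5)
By Proposition~\ref{prop:subgauss-scaling}, I first reduce to $a_n = 1$. Let $Q := (1-\frac1n)\delta_0 + \frac1{2n}(\delta_1+\delta_{-1})$. Then $\xi_*^2(P) = a_n^2\,\xi_*^2(Q)$, and $L(\lambda;P) = a_n^2 L(a_n\lambda;Q)$ with
\[
L(\lambda;Q) = \frac{2}{\lambda^2}\log\Bigl(1 + \frac{\cosh\lambda - 1}{n}\Bigr).
\]
So for part (a) it suffices to show $\frac{1}{2\log 2n} \le \sup_\lambda L(\lambda;Q) \le \frac{1}{\log n}$, using Proposition~\ref{prop:xi-L-char}.

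\textbf{Lower bound in (a).} Take the explicit test point $\lambda = 2\log 2n$. Then $\cosh\lambda - 1 \ge \tfrac12 e^{\lambda} - 1 = 2n^2 - 1$, so
\[
1 + \frac{\cosh\lambda - 1}{n} \ge 2n + 1 - \frac1n \ge 2n .
\]
Hence $L(\lambda;Q) \ge \frac{2\log 2n}{4\log^2 2n} = \frac{1}{2\log 2n}$.

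\textbf{Upper bound in (a).} I need $\log\bigl(1 + (\cosh\lambda - 1)/n\bigr) \le \lambda^2/(2\log n)$ for all $\lambda$. I will split on $s = |\lambda|$.
\begin{itemize}
\item For $s$ up to a constant multiple of $\sqrt{\log n}$, use $\log(1+x) \le x$ together with $\cosh s - 1 \le \frac{s^2}{2}\cosh s$. This reduces the claim to $\cosh s \le n/\log n$, which holds for $s \le \log n - \log\log n$ roughly.
\item For larger $s$, use $\cosh s \le e^s$ to get $\log(1 + e^s/n) \le \max\{s - \log n, 0\} + \log 2$. This is at most $s^2/(2\log n)$ once $s \ge c\log n$, since $s^2/(2\log n) - s + \log n \ge \log n/2$.
\end{itemize}
The main obstacle is matching the two regimes near $s \asymp \log n$ with the exact constant $1/\log n$, and handling small $n$. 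If the crude bounds leave a gap, I would instead analyse $h(s) = \log(1 + (\cosh s - 1)/n) - s^2/(2\log n)$ directly. At a critical point one has $\frac{\sinh s}{n + \cosh s - 1} = \frac{s}{\log n}$, and I would check that $h \le 0$ there.

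\textbf{Part (b).} I would bound $\delta_P(C)$ in two ways.
\begin{itemize}
\item First, $P$ is supported on $[-a_n, a_n]$, so $L(\lambda;P) \le 2a_n/|\lambda|$, as in the proof of Proposition~\ref{prop:unif-supp-delta-inclusion}. Since $L(0;P) = \sigma^2 \ge 0$, this gives $\delta_P(C) \le 2(\log n)^\gamma/C$.
\item Second, $\delta_P(C) \le \xi_*^2(P) - \sigma^2(P) \le \xi_*^2(P) \le a_n^2/\log n = (\log n)^{2\gamma - 1}$ by part (a).
\end{itemize}
Hence, with $u = \log n$,
\[
\delta_P(C) \le \min\{2u^\gamma/C,\; u^{2\gamma-1}\}.
\]
For $\gamma < 1/2$ the first term increases in $u$ and the second decreases, so the minimum is at most its value at the crossing point $u^{1-\gamma} = C/2$. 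There it equals $(C/2)^{(2\gamma-1)/(1-\gamma)} = r_\gamma(C)$. For $\gamma = 1/2$ the second bound is already $1 = r_{1/2}(C)$.
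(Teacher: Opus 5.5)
Your lower bound in (a) is the paper's (same test point $\lambda=2\log 2n$). Your part (b) is correct and equivalent to the paper's: bounding $\min\{2u^\gamma/C,\,u^{2\gamma-1}\}$ by its value at the crossing $u^{1-\gamma}=C/2$ is exactly the paper's case split on whether $C\ge 2(\log n)^{1-\gamma}$. The gap is the upper bound $\xi_*^2(Q)\le 1/\log n$, which (b) also uses. You never fix the split point between your two regimes or verify that together they cover all $s\ge 0$. In regime 2 you leave $c$ unspecified. Your displayed inequality $s^2/(2\log n)-s+\log n\ge\tfrac12\log n$ dominates the branch $s-\log n+\log 2$ only when $\tfrac12\log n\ge\log 2$, i.e.\ $n\ge 4$. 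You also never say for which $s<\log n$ it dominates the other branch, which is just $\log 2$. Nor do you check that regime 1 reaches that far, or treat $n\le 3$. All of this is deferred to a critical-point analysis of $h$ that is never carried out, so as written the upper bound, and with it (b), is not yet proved.

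It closes with your own tools. Split at $s_0:=\tfrac12\log n+\log 2$. Since $\cosh s_0=\sqrt{n}+\tfrac{1}{4\sqrt{n}}$ and $\sqrt{n}/\log n\ge e/2>1+\tfrac{1}{4n}$ for $n\ge 2$, you get $\cosh s_0\le n/\log n$, so regime 1 covers $[0,s_0]$. For $s\ge s_0$, your completed square is the identity $\frac{s^2}{2\log n}=s-\tfrac12\log n+\frac{(s-\log n)^2}{2\log n}\ge s-\tfrac12\log n$. This is at least $\log 2$ when $s_0\le s<\log n$, and at least $s-\log n+\log 2$ when $s\ge\log n$, provided $n\ge 4$. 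For $n\in\{2,3\}$ you have $p=\tfrac{1}{2n}\in[\tfrac{1}{6},\tfrac{1}{2})$, so Proposition~\ref{prop:3-point-sub-gauss} gives $\xi_*^2(Q)=1/n\le 1/\log n$.

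The paper avoids this bookkeeping by splitting exactly at $\lambda=\sqrt{2\log n}$. On the left it uses $\log(1+y)\le y$, $\cosh\lambda\le e^{\lambda^2/2}$ and monotonicity of $(e^u-1)/u$. On the right it bounds $M(\lambda)\le 1+(e^\lambda-1)/n$ and applies AM--GM to get $e^\lambda\le\sqrt{n}\,e^{x}$ with $x=\lambda^2/(2\log n)\ge 1$, which yields $e^\lambda-1\le n(e^x-1)$, i.e.\ $M(\lambda)\le e^x$. The paper's text phrases that step as a bound on $\log M(\lambda)$, but it only works as a bound on $M(\lambda)$. These two cases are complementary and valid for every $n\ge 2$.
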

\begin{proof}
    (a) Because we can rescale by $a_n$ and then apply Proposition~\ref{prop:subgauss-scaling}, it suffices to prove that if  $P = (1 - \frac1n)\delta_0 + \frac{1}{2n}\delta_{1} + \frac{1}{2n} \delta_{-1}$ then 
    \[
     \frac{1}{2 \log 2n}\leq \xi_*^2(P) \leq \frac{1}{\log n}.
    \]
    First we address the lower bound.  We have \begin{align*}
        M(2 \log 2n; P) &= \left(1 - \frac1n\right) + \frac{1}{2n}e^{2 \log 2n} + \frac{1}{2n}e^{-2 \log 2n} \\
        &= \left(1 - \frac1n\right) + \frac1n\left(\frac{(2n)^2 +(2n)^{-2}}{2}\right) \\
        &\geq \left(1 - \frac1n\right)  + 2n \geq 2n,
    \end{align*}
    so it follows that 
    \[
    \xi_*^2(P) \geq L(2 \log 2n; P) \geq \frac{2}{(2 \log 2n)^2}\log 2n \geq \frac{1}{2\log 2n}.
    \]
    For the upper bound, we will use the definition of $\xi_*^2$ and show that 
    \begin{equation}
        M(\lambda; P) \leq \exp\left(\frac{\lambda^2}{2\log n}\right).
    \label{eqn:mgf-upper-bound}
    \end{equation}
    There are two cases to handle. First, if $\lambda \leq \sqrt{2\log n}$, we have that 
    \[
    \log M(\lambda) \leq \frac{1}{n}\left(\frac{e^{\lambda} - e^{-\lambda}}{2} - 1\right) \leq\frac{1}{n}\left(e^{\lambda^2/2} - 1\right)
    \]
    by the inequalities $\log(1+x)\leq x$ and $\frac{e^x - e^{-x}}{2} \leq e^{x^2/2}$. Then since $s \mapsto (e^s - 1)/s$ is increasing and $\lambda^2/2 \leq \log n$ we have 
    \[
\frac{e^{\lambda^2/2}-1}{\lambda^2/2} \leq  \frac{e^{\log n} - 1}{\log n} \leq \frac{n}{\log n}
    \]
    so multiplying by $\lambda^2/2$ yields $e^{\lambda^2/2} - 1 \leq \frac{n\lambda^2}{2\log n}$. Therefore
    \[
    \log M(\lambda) \leq \frac{\lambda^2}{2 \log n},
    \]
    so $\lambda$ satisfies (\ref{eqn:mgf-upper-bound}) in this case. On the other hand, if $\lambda \geq \sqrt{2 \log n}$, then we have 
    \[
    \log M(\lambda) \leq \frac{1}{n}\left(\frac{e^{\lambda} - e^{-\lambda}}{2} - 1\right)  \leq \frac{e^{\lambda} - 1}{n}.
    \]
    By the AM-GM inequality, $\frac12(\log n + \frac{\lambda^2}{\log n})\geq \lambda$, so it follows that $e^{\lambda } \leq \exp(\frac12(\log n + \frac{\lambda^2}{\log n})) = \sqrt n \exp(\frac{\lambda^2}{2\log n})$. Combining this with the above display, it only remains to prove that $\sqrt n\exp(\frac{\lambda^2}{2\log n}) - 1  \leq n\exp(\frac{\lambda^2}{2\log n})$, but this holds trivially because $\frac{\lambda^2}{2\log n} > 1$. 
    \\
    %(b) Write $\lambda_n = 2a_n^{-1}\log 2n$. By a direct computation, we obtain that $L'(\lambda) > 0$ for all $\lambda \in (0, \lambda^*)$ and $L'(\lambda) < 0$ for all $\lambda \in (\lambda^*, \infty)$. It thus suffices to prove that $L'(\lambda_n) < 0$. Indeed, we have 
    %\[
    %L'(\lambda) = \frac{2}{\lambda^3}[\lambda\psi(\lambda) - 2 \psi(\lambda)],
    %\]
    %and by a direct computation, we have 
    %\[\frac{\lambda_nM'(\lambda_n)}{M(\lambda_n)} =  \lambda_na_n\frac{(2n - 1/8n^3)}{2n + 1 - 1/n + 1/8n^3} \leq \lambda_na_n =2\log 2n.
    %\]
    %Furthermore since $M(\lambda_n) \geq 2n$, $2\psi(\lambda_n) \geq 2\log 2n$. Putting these two inequalities together shows that $\lambda_n\psi(\lambda_n) < 2\psi(\lambda_n)$, and therefore $L'(\lambda_n) < 0$.
    %\\
    (b) Let $C > 0$ be arbitrary. If $2(\log n)^{1 - \gamma} \leq C$ then we have 
    \begin{align*}
        \delta_P(C) &\leq \sup_{|\lambda| \geq C} L(\lambda) \leq \sup_{|\lambda| \geq C} \frac{2}{\lambda^2} \log \E[e^{a_n\lambda}] \\ &\leq \sup_{|\lambda| \geq C}  \frac{2a_n}{\lambda} = \frac{2a_n}{C} \leq r_\gamma(C)
    \end{align*}
    where the last inequality follows by direct computation because $\gamma \geq 0$. 
    On the other hand, if $C \leq 2(\log n)^{1 - \gamma}$ then by (a), 
    \[
    \delta_P(C) \leq \xi_*^2(P) \leq (\log n)^{2\gamma - 1} = 2^{\frac{1 - 2\gamma}{1 - \gamma}}2^{\frac{2\gamma - 1}{1 - \gamma}} \frac{(\log n)^{\frac{2\gamma - 1}{1 - \gamma}}}{(\log n)^{\gamma\frac{2\gamma - 1}{1 - \gamma}}}=r_\gamma(2(\log n)^{1 - \gamma}) \leq r_\gamma(C).
    \]
\end{proof}
Now we can prove Proposition~\ref{prop:minimax-rates}.
\begin{proof}[Proof of Proposition~\ref{prop:minimax-rates}]
(a) By Proposition~\ref{prop:subgauss-scaling}, it will suffice to prove the case when $\Xi = 1$. In the general case, we rescale all distributions involved by $\sqrt{\Xi}$ so that their sub-Gaussian parameters are rescaled by $\Xi$. Towards this direction, let $a_n =(\log n)^\gamma$. Let $P_1 = \delta_0$, and for $n \in \N$, let $P_2 = (1 - \frac{1}{n})\delta_0 + \frac{1}{2n}\delta_{ a_n} + \frac{1}{2n}\delta_{-a_n}$. Note then that by Propositions~\ref{prop:subgauss-scaling} and \ref{prop:three-point-computation}(a) we have 
\[
\xi_*^2(P_2) = \xi_*^2\left((1 - \frac{1}{n})\delta_0 + \frac{1}{2n}\delta_{(\log n)^\gamma} + \frac{1}{2n}\delta_{-(\log n)^\gamma}\right) =  (\log n)^{2\gamma - 1}\leq 1,
\]
and thus Proposition~\ref{prop:three-point-computation}(b) shows that $\xi_*^2(P_2) \in \mathcal P_c(1, 0, r_\gamma)$.
Now we have 
\begin{align*}
    H^2(P_1, P_2) &= \frac12 \sum_{x \in \{0, \pm a_n\}} \left(\sqrt{P_1(x)} - \sqrt{P_2(x)}\right)^2 \\ &= 1 - \sum_{x \in \{0, \pm a_n\}} \sqrt{P_1(x)P_2(x)} \\
    &= 1 - \sqrt{1 - n^{-1}}.
\end{align*}
Since $1 - H^2(P_1^{\otimes n}, P_2^{\otimes n}) = (1 - H^2(P_1, P_2))^n$, we have 
\begin{align*}
    H^2(P_1^{\otimes n}, P_2^{\otimes n})  = 1 - \left( 1- n^{-1}\right)^{n/2},
\end{align*}
and then we use $\mathrm{TV}(P^{\otimes n}_1, P^{\otimes n}_2) \leq \sqrt 2 H(P^{\otimes n}_1, P^{\otimes n}_2)$ to get 
\begin{align*}
    \mathrm{TV}(P^{\otimes n}_1, P^{\otimes n}_2) \leq \sqrt{2\left(1 - (1 - n^{-1})^{n/2}\right)}
\end{align*}
so that $\limsup_{n \to \infty} \mathrm{TV}(P^{\otimes n}_1, P^{\otimes n}_2) \leq \sqrt{2(1 - e^{-1/2})}$. By Proposition~\ref{prop:three-point-computation}(a), we have $\xi_*^2(P_2) \geq \frac{(\log n)^{2\gamma}}{2 \log 2n}$, and clearly $\xi_*^2(P_1) = 0$. Thus we may apply Le Cam's two point method to obtain that
    \[
    \inf_{\hat \xi^2} \sup_{P \in \mathcal P(1, 0, r_\gamma)} \E_P|\hat \xi^2 - \xi_*^2| \geq \frac{\left(1 - \sqrt{2(1 - e^{-1/2})}\right)}{8}\frac{(\log n)^{2\gamma}}{\log 2n}.
    \] 
    (b) Without loss of generality, we may assume that $C_0 = 16$ and $\delta_0 = 1/2$ since the class $\mathcal{P}(\Xi, C_0, r_0)$ with $r_0(t) = -\delta_0$ becomes larger by increasing $C_0$ or decreasing $\delta_0$. Moreover by the same rescaling argument as in (a), we may assume $\Xi = 1$.
    
    For $q \in (0, 1)$, define $P_q := (1 - q)\delta_0 + (q/2)\delta_1 + (q/2)\delta_{-1}$. Define $q_0 = \frac{1}{2}$ and $q_n := q_0 + 1/(4\sqrt n)$. 
    
    It is clear that $\xi^2_*(P_{q_0})$ and $\xi^2_*(P_{q_n})$ are both no larger than 1 since the distributions are supported on $[-1, 1]$. Moreover, by Proposition~\ref{prop:unif-supp-delta-inclusion}, $P_{q_0}, P_{q_n} \in \mathcal P(\Xi, C_0, r_-)$. Then, by Proposition~\ref{prop:3-point-sub-gauss} and the fact that $q_0$ and $q_n$ are both greater than $1/3$, we have
    \[
    |\xi_*^2(P_{q_0}) - \xi_*^2(P_{q_n})| = |\sigma^2(P_{q_0}) - \sigma^2(P_{q_n})| =  1/(4\sqrt n).
    \]
    On the other hand, we have by Theorem 5 of \cite{gibbs2002choosingboundingprobabilitymetrics} that 
    \begin{align*}
        \text{KL}(P_{q_0}, P_{q_n}) &= q_0 \log \frac{q_0}{q_n} + (1-q_0)\log \frac{1-q_0}{1-q_n} \\
        &= \frac12 \log \frac{1}{2q_n} + \frac12 \log \frac{1}{2(1 - q_n)} \\ &= \text{KL}(\text{Ber}(q_0), \text{Ber}(q_n)) \\ 
        &\leq \chi^2(\text{Ber}(q_0), \text{Ber}(q_n)) \\
        &= \frac{(q_0 - q_n)^2}{4 q_n} + \frac{(q_0 - q_n)^2}{(1- q_n)} \\
        &\leq \frac{(1/4)^2}{n} \biggl( \frac{1}{4q_n} + \frac{1}{1-q_n}\biggr) = \frac{(1/4)^2}{n}\frac{ (1 + 3q_n)}{4 q_n(1 - q_n)} \leq \frac{1}{3n},
    \end{align*}
    so $\text{KL}(P_{q_0}^{\otimes n}, P_{q_n}^{\otimes n} ) = n\text{KL}(P_{q_0}, P_{q_n}) \leq \frac{1}{3}$. By Pinsker's inequality, we have that $\mathrm{TV}(P_{q_0}^{\otimes n}, P_{q_n}^{\otimes n}) \leq \sqrt{ \frac{1}{2} \mathrm{KL}( P_{q_0}^{\otimes n}, P_{q_n}^{\otimes n} )} \leq \sqrt{1/6}$. 
    \[
    \inf_{\hat \xi^2} \sup_{P \in \mathcal P(\Xi, C_0, r_-)} \mathbb E_P|\hat \xi^2(X_1, \dots, X_n) - \xi_*^2(P)| \geq \frac{1}{8\sqrt n}\biggl(1 - \frac{1}{\sqrt{6}}\biggr). 
    \]
\end{proof}
\begin{proof}[Proof of Proposition~\ref{prop:P(Xi,K)-unif-consistency}]
By the triangle inequality and then Theorem~\ref{prop:second-consistency},  
\[
|\hat \xi^2_n - \xi_*^2(P)| \leq |\tilde \xi^2_n(P) - \xi_*^2(P)| + |\hat \xi^2_n - \tilde \xi^2_n(P)|  = |\tilde \xi^2_n(P) - \xi_*^2(P)| + O_p(n^{-1/2 + \eps})
\]
for any $\eps > 0$, uniformly over $\mathcal P(\Xi)$, and 
\[
\xi_*^2(P) - \tilde \xi^2_n(P) \leq \delta_P(C_n) \vee 0 \leq r(C_n)
\]
so $\hat \xi^2_n - \xi_*^2(P) = r(C_n) + O_p(n^{-1/2 + \eps})$.
\end{proof}
\begin{proof}[Proof of Proposition~\ref{prop:maximizer-uniform-bound}]
By assumption, we have $\delta_P(C_0) < -\delta_0$, so 
\[
\sup_{|\lambda| \geq C_0} L(\lambda) \leq \sup_{|\lambda| \leq C_0} L(\lambda) - \delta_0.
\]
Therefore, none of the maximizers of $L$ can be outside of the interval $[-C_0, C_0]$. 

As for its empirical analogue $L_n$, first let $\lambda^* \in \Lambda^*(P)$. By Proposition~\ref{prop:L_n-cgf-bound}(a), there exists a sequence $\eta_n = o(1)$ dependent only on $\Xi$ and $\delta_0$ and $C_0$ such that, with probability at least $1 - \eta_n$, 
    \begin{equation}
        \sup_{|\lambda| \leq C_0} L_n(\lambda) \geq L_n(\lambda^*) \geq L(\lambda^*) - \sup_{|\lambda| \leq C_0} |L_n(\lambda) - L(\lambda)| > \xi_*^2 - \frac{\delta_0}{2}.
        \label{eqn:L_n-trunc-bound}
    \end{equation}
    By Proposition~\ref{prop:L_n-cgf-bound}(b), there exists another sequence $\eta'_n = o(1)$ dependent only on $\Xi$ and $\delta_0$ such that, with probability at least $1 - \eta'_n$, 
    \begin{align}
    \sup_{|\lambda| \leq C_n} |L_n(\lambda) - L(\lambda)| < \frac{\delta_0}{2}.  \label{eq:Lambda_good_event2}
    \end{align}
    On event~\eqref{eq:Lambda_good_event2}, we have that
    \[
    \sup_{C_0 < |\lambda| \leq C_n} L_n(\lambda) \leq \sup_{|\lambda| \leq C_n} |L_n(\lambda) - L(\lambda)| + \sup_{|\lambda| > C_0} L(\lambda) \leq \frac{\delta_0}{2} + \xi_*^2 - \delta_0 \leq \xi_*^2 - \frac{\delta_0}{2}.
    \]

    On intersection of the event~\eqref{eqn:L_n-trunc-bound} and~\eqref{eq:Lambda_good_event2} therefore, we have that
    \[
    \sup_{|\lambda| \leq C_0} L_n(\lambda) > \sup_{C_0 < |\lambda| \leq C_n} L_n(\lambda),
    \]
    so that $\Lambda^*_n \subset [-C_0, C_0]$. We thus have that
    \[
\sup_{P \in \mathcal{P}(\Xi, C_0, r_0) }\mathbb{P}_P(\Lambda^*_n \not\subset [-C_0, C_0]) \leq \eta_n + \eta'_n,
    \]
    so that the Proposition follows as desired. 
\end{proof}

\subsubsection{Uniform Donsker theorems}
\label{sec:uniform-donsker}
The following definitions are due to \cite{vdv&w2012weak}. Let $\mathbb D$ be a normed vector space, and let $\mathcal B(\mathbb D)$ denote the Borel $\sigma$-algebra on $\mathbb D$ (generated by the open sets under the norm topology of $\mathbb D$). For a probability space $(\Omega, \mathscr F, P)$, we define the \textbf{outer expectation} for any function $X: \Omega \to \mathbb R$ (even non-measurable) as 
\[
\E^*_P X := \inf \left\{\E_PY: \text{$Y$ is $(\mathscr F, \mathcal B(\R))$-measurable and $Y \geq X$}\right\}.
\]
If $(\Omega_n, \mathscr F_n, P_n)$ are a sequence of probability spaces and $X_n : \Omega_n \to \mathbb D$ are arbitrary, then $X_n$ \textbf{converges weakly} to a Borel probability measure $P$ on $\mathbb D$ if as $n \to \infty$, 
\[
\E^*_{P_n} f(X_n) \to \E_P f(X)
\]
for any $f: \mathbb D \to \R$ continuous and bounded. If $\mathbb D$ is separable, then by Theorem 1.12.1 of \cite{vdv&w2012weak}, weak convergence is equivalent to 
\[
\lim_{n \to \infty} \sup_{h \in \mathrm{BL}_1(\mathbb D)} |\E^*_Ph(X_n) - \E_Ph(X)| \to 0,
\]
where 
\[
\mathrm{BL}_1(\mathbb D) = \{h: \mathbb D \to \R: \text{$h$ is 1-Lipschitz and $\|h\|_{\mathbb D} < \infty$}\}.
\]
Let $\mathcal F$ be a family of functions on $\R$. For any function $z: \mathcal F \to \R$, we define 
\[
\|z\|_{\mathcal F} := \sup_{f \in \mathcal F} |z(f)|
\]
and make 
\[
\ell^\infty(\mathcal F) := \left\{z: \mathcal F \to \R : \|z\|_{\mathcal F} <\infty\right\}
\]
into a Banach space by endowing it with the norm $\| \cdot \|_{\mathcal F}$. 

 Let $X_1, \dots, X_n$ be a random sample. The $X_i$'s induce an empirical measure $\mathbb P_n := \frac1n \sum_{i=1}^n \delta_{X_i}$.
 We say that $\mathcal F$ is \textbf{$P$-Donsker} if the empirical process if the empirical process $\mathbb G_{n, P} := \sqrt n(\mathbb P_n - P)$ converges weakly in $\ell^\infty (\mathcal F)$ to a tight Borel measurable element $\mathbb G_P$ of $\ell^\infty(\mathcal F)$. That is, 
\begin{equation}
    \lim_{n \to \infty} \sup_{h \in \text{BL}_1} |\E^*_Ph(\mathbb G_{n, P}) - \E h(\mathbb G_P)| = 0.
    \label{eqn:ptwise-donsker}
\end{equation}
If this convergence is uniform over $\mathcal P$, then $\mathcal F$ is \textbf{Donsker uniformly over $\mathcal P$}:
\[
\lim_{n \to \infty} \sup_{P \in \mathcal P} \sup_{h \in \text{BL}_1} |\E^*_Ph(\mathbb G_{n, P}) - \E h(\mathbb G_P)| = 0.
\]
If the mean-zero Gaussian process $\mathbb G_P$ with covariance function $(f, g) \mapsto \E_{P}f(X)g(X) - \E_Pf(X)\E_Pg(X)$ has a version that is a tight Borel measurable element of $\ell^\infty(\mathcal F)$, then $\mathcal F$ is \textbf{$P$-pre-Gaussian.} If $\mathcal F$ is $P$-pre-Gaussian for each $P \in \mathcal P$, and it holds that
\[
\sup_{P \in \mathcal P} \E_P \|\mathbb G_P\|_{\mathcal F} < \infty \quad \text{and} \quad \lim_{\delta \to 0} \sup_{P \in \mathcal P} \E_P \sup_{\rho_P(f-g) < \delta} |\mathbb G_P(f) - \mathbb G_P(g)| = 0
\]
where $\rho_P(f) = \left(\E_P\{f(X) - \E_Pf(X)\}^2\right)^{1/2}$, then $\mathcal F$ is \textbf{pre-Gaussian uniformly over $\mathcal P$}. 

In order to prove Proposition~\ref{prop:mgf-uniform-donsker}, we shall need the following result: 

\begin{proposition}[\cite{vdv&w2012weak}, Theorem 2.8.4]
\label{prop:uniform-donkser}
    Let $\mathcal P$ be a class of distributions, and $\mathcal F$ be a class of measurable functions with envelope function $F$ that satisfies 
    \[
    \lim_{R \to \infty} \sup_{P \in \mathcal P} \E_P(F^2(X)\1\{F(X) > R\}) = 0.
    \]
    Furthermore, assume that 
    \[
    \int_0^\infty \sup_{P \in \mathcal P} \sqrt{\log N_{[\,]}(\eps \|F\|_{P, 2}, \mathcal F, L_2(P))} \, d\eps < \infty
    \]
    where $N_{[\,]}$ denotes bracketing number. Then $\mathcal F$ is Donsker and pre-Gaussian, both uniformly over $\mathcal P$.
\end{proposition}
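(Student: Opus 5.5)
This result is cited from \cite{vdv&w2012weak}, so in the paper it may simply be invoked. If I had to prove it, I would reduce it to the standard characterization of uniform Donsker classes (\cite{vdv&w2012weak}, Theorem 2.8.2). That characterization says: if $\mathcal F$ is totally bounded in $\rho_P$ uniformly in $P \in \mathcal P$, the finite-dimensional marginals of $\mathbb G_{n,P}$ converge uniformly in $P$, and $\mathbb G_{n,P}$ is asymptotically $\rho_P$-equicontinuous uniformly in $P$, then $\mathcal F$ is Donsker uniformly over $\mathcal P$. I would therefore verify these three ingredients one at a time, and treat uniform pre-Gaussianity by a separate chaining bound for the limit process $\mathbb G_P$.

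\emph{Total boundedness and marginals.} Finiteness of the bracketing integral forces $N_{[\,]}(\eps\|F\|_{P,2}, \mathcal F, L_2(P))$ to be bounded uniformly in $P$ for each $\eps > 0$. Brackets of $L_2(P)$-size $\eps\|F\|_{P,2}$ are in particular $\rho_P$-balls of that radius. The uniform integrability condition gives $\sup_P \|F\|_{P,2} < \infty$, and together these yield uniform total boundedness. For the marginals, a fixed finite collection $f_1, \dots, f_m$ is dominated by $F$. The Lindeberg condition for $\sqrt n(\mathbb P_n - P)(f_1, \dots, f_m)$ then holds uniformly in $P$ by exactly the hypothesis $\sup_P \E_P F^2\1\{F > R\} \to 0$. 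A uniform Lindeberg--Feller CLT, for instance via a Lindeberg replacement argument in the bounded-Lipschitz metric, then gives uniform convergence of the marginals.

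\emph{Uniform asymptotic equicontinuity; the main obstacle.} This is the heart of the argument, and I would follow the bracketing chaining proof of the pointwise bracketing CLT (Ossiander's theorem), keeping every constant explicit in $P$. The steps are as follows.
\begin{enumerate}
\item For each level $q$, take nested partitions of $\mathcal F$ into $N_q$ brackets of $L_2(P)$-size $2^{-q}\|F\|_{P,2}$.
\item Write each $f$ as a chain $\pi_{q_0}f + \sum_q (\pi_{q+1}f - \pi_q f)$.
\item At each level, truncate the increments at the adaptive threshold $a_q = 2^{-q}\sqrt n/\sqrt{\log N_{q+1}}$.
\item Bound the truncated increments with Bernstein's inequality, in the form of the maximal inequality $\E\max_{i \le N} |\mathbb G_n f_i| \lesssim \log N \cdot \|f\|_\infty/\sqrt n + \sqrt{\log N}\,\|f\|_{P,2}$.
\item Bound the untruncated remainders by the bracket sizes, using the elementary inequality $\E|\mathbb G_n f| \le 2\sqrt n\, \E_P|f|\1\{|f| > a\} \le 2\sqrt n\, \E_P f^2/a$.
\end{enumerate}
This produces a bound of the form
\[
\E^*_P \sup_{\rho_P(f-g) < \delta} |\mathbb G_{n,P}(f-g)| \lesssim \int_0^{\eta} \sqrt{\log N_{[\,]}(\eps\|F\|_{P,2}, \mathcal F, L_2(P))}\, d\eps \,\|F\|_{P,2} + \frac{\E_P F^2\1\{F > c\sqrt n\,\eta\}}{\eta} + \text{(terms in } \delta/\eta).
\]
Taking the supremum over $P$, both the integral tail and the envelope term vanish uniformly: the first by the uniform entropy integral hypothesis, the second by the uniform integrability of $F^2$. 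Sending first $n \to \infty$, then $\delta \to 0$, then $\eta \to 0$ gives uniform equicontinuity. The delicate points are two. First, the choice of $q_0$ and of the truncation levels must depend on $P$ only through quantities that are controlled uniformly. Second, one must check that the hidden constants in the Bernstein step do not depend on $P$.

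\emph{Uniform pre-Gaussianity.} For $\mathbb G_P$ I would run the same chaining argument, but without truncation, since the process is Gaussian. Gaussian maximal inequalities bound $\E\sup_{\rho_P(f-g)<\delta}|\mathbb G_P(f) - \mathbb G_P(g)|$ by the bracketing entropy integral up to level $\delta$ plus a term of order $\delta\sqrt{\log N}$. Similarly, $\E\|\mathbb G_P\|_{\mathcal F}$ is bounded by $\sup_P\|F\|_{P,2}$ times the full entropy integral. Both bounds are uniform in $P$, which gives the two required conditions, and the existence of a tight version for each $P$ follows from the pointwise statement.
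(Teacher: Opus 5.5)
The paper does not prove this statement; it imports it from van der Vaart and Wellner. Your outline is a faithful reconstruction of the textbook argument behind that citation, so it is correct and consistent with the paper's treatment: uniform total boundedness from the bracketing numbers, a uniform Lindeberg CLT for the marginals, bracketing chaining with adaptive truncation made uniform in $P$, the characterization of uniform Donsker classes, and Gaussian chaining for uniform pre-Gaussianity. The one point you flag but leave open, that the truncation levels scale with $\|F\|_{P,2}$, is settled by splitting on whether $\|F\|_{P,2} \le \tau$. When it is, $\E^*_P\|\mathbb G_{n,P}\|_{\mathcal F} \lesssim \tau$ times the uniformly bounded entropy integral. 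Otherwise the coarsest truncation level is at least a fixed multiple of $\sqrt n$ uniformly in $P$, so the uniform integrability of $F^2$ makes the envelope remainder vanish.
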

\begin{proof}[Proof of Proposition~\ref{prop:mgf-uniform-donsker}]
    An envelope function for $\mathcal F_k$ is $F(x) := |x|^ke^{C|x|}$. Note that, by the Cauchy-Schwarz inequality and then Proposition~\ref{prop:sub-Gaussian-props},
    \[
    \sup_{P \in \mathcal P} \E_PF(X) \leq \sup_{P \in \mathcal P}\{\E_P X^{2k} \E_P e^{2CX}\}^{1/2} \leq 2\Xi^{2k}k! \exp(4C^2\Xi^2) < \infty.
    \]
    Then, by the generalized H\"older inequality, Markov's inequality, and then Proposition~\ref{prop:sub-Gaussian-props}, 
    \begin{align*}
        \E_P(F^2(X) \1\{F(X) > R\}) &= \E_P(X^{2k}e^{CX}\1\{F(X) > R\}) \\
        &\leq \{\E_P(X^{6k})M(3C)\P(F(X) > R)\}^{1/3} \\ 
        &\leq R^{-1/3} \{\E_P(X^{6k})M(3C)\E_P F(X)\}^{1/3} \\
        &\leq R^{-1/3}\{8\Xi^{6k}(3k)!\exp(9 C^2\Xi^2)\} \to 0,
    \end{align*}
    as $R \to \infty$. Therefore the envelope condition holds. Moving onto the bracketing entropy, let $P \in \mathcal P(\Xi)$ and $\eps > 0$. Now we have, for any $\lambda, \lambda' \in [-C, C]$,
\[
|x^ke^{\lambda x} - x^ke^{\lambda'x}| \leq |x^{k+1}e^{\tilde \lambda x}| |\lambda - \lambda'| \leq |x^{k+1}e^{Cx}||\lambda - \lambda'|,
\]
where $\tilde \lambda$ is prescribed by the mean-value theorem. Then, by the Cauchy-Schwarz inequality, we have 
\begin{multline}
    \left\|x^{k+1}e^{Cx}\right\|_{P, 2}\leq \{\E_P X^{2k+2}e^{2CX}\}^{1/2} \leq  \left\{\E_PX^{4k+4} M(4C)\right\}^{1/4} \\ \leq \{2\Xi^{4k+4}\Gamma(2k+3)\exp(8C^2\Xi^2)\}^{1/4} =: D(k, C, \Xi)
\end{multline}
so 
\[
\left\|m_\lambda^{(k)} - m^{(k)}_{\lambda'}\right\|_{P, 2} \leq D(k, C, \Xi)|\lambda - \lambda'|.
\]
Set $\delta := \eps/D(k, C, \Xi)$, and let $\Pi_\delta$ be a $\delta$-net over $[-C, C]$ such that $|\Pi_\delta| \leq \lceil2C\delta^{-1}\rceil = \lceil2CD(k, C, \Xi)\eps^{-1}\rceil$. Now for each $\pi \in \Pi_\delta$, we define 
\[
\ell_\pi (x) := m^{(k)}_\pi(x) - \delta\,|x|^{k+1}e^{C|x|} , \quad u_{\pi}(x) := m^{(k)}_\pi(x) + \delta\,|x|^{k+1}e^{C|x|}.
\]
Then, whenever $\lambda \in [\pi - \delta, \pi + \delta]$, we have $m_\lambda^{(k)} \in [\ell_\pi, u_\pi]$, and furthermore 
\[
\left\|u_\pi - \ell_\pi\right\|_{P, 2} \leq 2\delta D(k, C, \Xi) \leq 2\eps.
\]
We thus have 
\[
\sup_{P \in \mathcal P}N_{[\,]}\!\left(\eps,\, \mathcal F_k,\, L_2(P)\right) 
    \leq
    \left\lceil \frac{4C D(k,C,\Xi)}{\eps} \right\rceil.
\]
Finally, if $\eps > 4CD(k, C, \Xi)$, then $N_{[\,]}\!\left(\eps,\, \mathcal F_k,\, L_2(P)\right)  = 1$, so we have 
\[
\int_0^\infty \sup_{P \in \mathcal P} \sqrt{\log N_{[\,]}\!\left(\eps,\, \mathcal F_k,\, L_2(P)\right)} \leq \int_0^{4CD(k, C, \Xi)} \sqrt{\log \left\lceil \frac{4C D(k,C,\Xi)}{\eps} \right\rceil}\, d\eps < \infty.
\]
We conclude via Proposition~\ref{prop:uniform-donkser}.
\end{proof}

A small technical modification must be made to transfer this property to $M_n(\lambda)$.
\begin{proposition}
\label{prop:centered-mgf-uniform-donsker}
Fix $k \in \N_0$. As $n \to \infty$, \[
\sup_{\lambda \in K} \sqrt n(M_n^{(k)}(\lambda) - M^{(k)}(\lambda; P)) = O_p(1),
\]
uniformly over $P \in \mathcal P(\Xi)$.
\end{proposition}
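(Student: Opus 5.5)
We reduce the centered process to the uncentered one, which Proposition~\ref{prop:mgf-uniform-donsker} already controls, and then bound the error introduced by centering at $\bar X$ (here $K$ denotes a compact interval, which we take to be $[-C_0, C_0]$ as in the definition of $\mathcal F_k$). Write $\tilde M_n^{(k)}(\lambda) = \frac1n\sum_{i=1}^n X_i^k e^{\lambda X_i} = \mathbb P_n f_\lambda$ with $f_\lambda(x) = x^k e^{\lambda x} \in \mathcal F_k$, and note $M^{(k)}(\lambda;P) = P f_\lambda$. By the triangle inequality,
\[
\sup_{\lambda\in K}\sqrt n\,|M_n^{(k)}(\lambda) - M^{(k)}(\lambda)| \le \|\mathbb G_{n,P}\|_{\mathcal F_k} + \sup_{\lambda\in K}\sqrt n\,|M_n^{(k)}(\lambda) - \tilde M_n^{(k)}(\lambda)|.
\]

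For the first term, we use that $\mathcal F_k$ is Donsker and pre-Gaussian uniformly over $\mathcal P(\Xi)$. Since $z\mapsto \|z\|_{\mathcal F_k}$ is $1$-Lipschitz, the bounded Lipschitz functions $h_R(z) = (\|z\|_{\mathcal F_k}\wedge (R+1) - R)_+$ give
\[
\sup_P \P^*_P(\|\mathbb G_{n,P}\|_{\mathcal F_k} > R+1) \le \sup_P \E h_R(\mathbb G_P) + o(1).
\]
The uniform pre-Gaussian condition $\sup_P \E\|\mathbb G_P\|_{\mathcal F_k}<\infty$ together with Markov's inequality makes $\sup_P \E h_R(\mathbb G_P)\le \sup_P \P(\|\mathbb G_P\|>R)\to 0$ as $R\to\infty$. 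Hence $\|\mathbb G_{n,P}\|_{\mathcal F_k} = O_p(1)$ uniformly.

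For the centering term, we follow the mean-value argument from Proposition~\ref{propn:mgf-concentration-ineq}. Set $g_\lambda(\mu) = \frac1n\sum_i (X_i-\mu)^k e^{\lambda(X_i-\mu)}$, so that
\[
M_n^{(k)}(\lambda) - \tilde M_n^{(k)}(\lambda) = g_\lambda'(\tilde\mu)\,\bar X
\]
for some $\tilde\mu$ between $0$ and $\bar X$. We have $\sqrt n\,\bar X = O_p(1)$ uniformly, by Chebyshev and $\mathrm{Var}(X)\le \xi_*^2\le\Xi$. It therefore remains to show that $\sup_{\lambda\in K,\,|\mu|\le 1}|g'_\lambda(\mu)| = O_p(1)$ uniformly, working on the event $\{|\bar X|\le 1\}$, whose probability tends to one uniformly. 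Using $|\mu|\le 1$ and $|\lambda|\le C_0$, we bound
\[
|g'_\lambda(\mu)| \le \frac1n\sum_i \bigl(k|X_i-\mu|^{k-1} + C_0|X_i-\mu|^k\bigr)e^{C_0|X_i-\mu|} \le e^{C_0}\,\frac1n\sum_i G(X_i),
\]
where $G(x) = (k(|x|+1)^{k-1} + C_0(|x|+1)^k)e^{C_0|x|}$ does not depend on $\lambda$ or $\mu$. By Markov's inequality it suffices that $\sup_{P\in\mathcal P(\Xi)}\E_P G(X)<\infty$. This follows from Cauchy--Schwarz together with Proposition~\ref{prop:sub-Gaussian-props}(b),(c), using $e^{C_0|x|}\le e^{C_0x}+e^{-C_0x}$, exactly as in the envelope bound in the proof of Proposition~\ref{prop:mgf-uniform-donsker}.

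Combining the two terms gives the claim. The main obstacle is the first step: passing from uniform weak convergence, stated through $\mathrm{BL}_1$ and outer expectations, to uniform tightness of the supremum. The key point is that the sup-norm is Lipschitz and that the uniform pre-Gaussian moment bound controls the tail of the limit uniformly in $P$. Measurability is handled by working with outer probabilities throughout.
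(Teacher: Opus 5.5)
Your proof is correct. It shares the paper's top-level decomposition: the uncentered empirical process plus a centering error, with $\sqrt n\,\bar X = O_p(1)$ uniformly. It handles both pieces somewhat differently, though.

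For the uncentered part, the paper simply asserts that Proposition~\ref{prop:mgf-uniform-donsker} gives $\sup_{\lambda}|\tilde M_n^{(k)}(\lambda)-M^{(k)}(\lambda)| = O_p(n^{-1/2})$ uniformly. Your bounded-Lipschitz truncation $h_R$, combined with the uniform pre-Gaussian bound $\sup_P\E\|\mathbb G_P\|_{\mathcal F_k}<\infty$, supplies the argument that step actually needs, so your version is more complete there.

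For the centering error, the paper uses the exact identity $M_n(\lambda)=e^{-\lambda\bar X}\tilde M_n(\lambda)$ and the Leibniz rule. This writes $\sqrt n(M_n^{(k)}-\tilde M_n^{(k)})$ as $(e^{-\lambda\bar X}-1)\sqrt n\,\tilde M_n^{(k)}$ plus terms $\sqrt n(-\bar X)^j\tilde M_n^{(k-j)}$. The paper then controls the $\tilde M_n^{(k-j)}$ by invoking the Donsker result again for the lower-order classes $\mathcal F_{k-j}$. You instead apply the mean-value theorem in the centering parameter $\mu$, as in Proposition~\ref{propn:mgf-concentration-ineq}. You then dominate $g'_\lambda$ by the envelope average $e^{C_0}\frac1n\sum_i G(X_i)$, which is uniformly $O_p(1)$ by Markov's inequality and the uniform sub-Gaussian moment bounds.

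The trade-off is as follows. Your route needs only first-moment control of a single envelope, with no empirical-process input for the centering term. The cost is localizing to $\{|\bar X|\le 1\}$ and absorbing the shift via $|X_i-\mu|\le |X_i|+1$. The paper's route is purely algebraic and needs no localization, but it leans on the Donsker property of every $\mathcal F_j$ with $j\le k$. Your use of Chebyshev rather than the sub-Gaussian tail bound for $\bar X$ is an inessential difference.
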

\begin{proof}
    Recalling that $\tilde M_n(\lambda) := \frac1n\sum_{i=1}^n e^{\lambda X_i}$, we have by Proposition~\ref{prop:mgf-uniform-donsker} that 
    \begin{align*}
        |M_n^{(k)}(\lambda) - M^{(k)}(\lambda)| &\leq |M_n^{(k)}(\lambda) - \tilde M_n^{(k)}(\lambda)| + |\tilde M_n^{(k)}(\lambda) - M^{(k)}(\lambda)| \\ &= |M_n^{(k)}(\lambda) - \tilde M_n^{(k)}(\lambda)| + O_p(n^{-1/2})
    \end{align*}
    uniformly over $P \in \mathcal P(K)$. To address the first term, observe that $M_n(\lambda) = e^{-\lambda \bar X} \tilde M_n(\lambda)$ so by direct differentiation, we have 
    \[
    \sqrt n(M_n^{(k)}(\lambda) - \tilde M_n^{(k)}(\lambda)) = (e^{-\lambda \bar X} - 1)\sqrt n\tilde M_n^{(k)}(\lambda) + e^{-\lambda \bar X} \sum_{j=1}^k {k \choose j}\sqrt n(-\bar X)^j\tilde M_n^{(k - j)}(\lambda).
    \]
    Now note that 
    \[
    \sup_{P \in \mathcal P(\Xi)}\P_P(\sqrt n|\bar X| > t) \leq \sup_{P \in \mathcal P(\Xi)}2\exp\left(-\frac{t^2}{2\xi_*^2}\right) \leq 2\exp\left(-\frac{t^2}{2\Xi^2}\right),\]
    so that $\bar X = O_p(n^{-1/2})$ uniformly over $\mathcal P(\Xi)$. This with Proposition~\ref{prop:mgf-uniform-donsker} shows that all the terms in the summation are $O_p(1)$ uniformly over $\mathcal P(\Xi)$, and therefore we conclude that 
    \[
     \sup_{\lambda \in K}\sqrt n(M_n^{(k)}(\lambda) - \tilde M_n^{(k)}(\lambda)) = O_p(1),
    \]
    uniformly over $\mathcal P(\Xi)$.
\end{proof}
\begin{proof}[Proof of Proposition~\ref{prop:L_n-uniform-rootn}]
    By Proposition~\ref{prop:centered-mgf-uniform-donsker}, we may repeat the argument of Corollary~\ref{cor:cgf-unif-conv} to yield 
    \[
    \sup_{\lambda \in K} \sqrt n|\psi_n^{(k)}(\lambda) - \psi^{(k)}(\lambda)| = O_p(1)
    \]
    uniformly over $\mathcal P(\Xi)$. 
    The claim follows by putting this with Proposition~\ref{prop:L_n-cgf-bound}. 
\end{proof}
\begin{proof}[Proof of Theorem~\ref{prop:hat xi-minimax-optimality}]
    According to Proposition~\ref{prop:maximizer-uniform-bound}, we may make $n$ large enough that the event 
    \[
    \Lambda^*_n \cup \Lambda^*(P) \subset [-C_0, C_0]
    \]
    occurs with high probability, uniformly over $\mathcal P(\Xi, C_0, r_-)$. Moreover, if $C_n$ is made larger than $C$, then we must have $\tilde \xi_n^2(P) = \xi^2_*(P)$ for all $P \in \mathcal P(\Xi, C_0, r_-)$. Therefore we have
    \[
    |\hat \xi_n^2 - \xi_*^2| \leq |\hat \xi^2_n - \tilde \xi^2_n(P)| + |\tilde \xi^2_n(P) - \xi_*^2(P)| \leq |\hat \xi^2_n - \tilde \xi^2_n(P)|.
    \]
    Now repeating the argument in Theorem~\ref{prop:second-consistency}, we have 
    \[
    |\hat \xi^2_n - \tilde \xi^2_n(P)| \leq \sup_{|\lambda| \leq C_0} |L_n(\lambda) - L(\lambda)| = O_p(n^{-1/2}),
    \]
    and this convergence is uniform over $\mathcal P(\Xi, C_0, r_-) \subset \mathcal P(\Xi)$. 
\end{proof}
\subsection{Proofs for Section 4}
Before we present the proof of Proposition~\ref{prop:divergence}, we shall need a brief lemma. 
\begin{proposition}
\label{prop:std-divergence}
    Let $X_1, \dots, X_n \sim P$ and write $\hat \sigma_n^2 = \frac1n \sum_{i=1}^n (X_i - \bar X)^2$. If $\E_PX = \infty$, then $\hat \sigma_n^2 \xrightarrow{\mathrm{a.s.}} \infty$.
\end{proposition}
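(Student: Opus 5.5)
We will read the hypothesis as $\E_P X^2 = \infty$ (infinite variance), which is the case relevant for Proposition~\ref{prop:divergence}; the argument also covers $\E_P|X| = \infty$, since that implies $\E_P X^2 = \infty$. The plan is a truncation argument. We use the pairwise representation of the sample variance,
\[
\hat\sigma_n^2 = \frac{1}{2n^2}\sum_{i,j=1}^n (X_i - X_j)^2 .
\]
Its advantage is that it contains no sample mean, so truncation can be compared termwise.

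\textbf{Step 1 (truncation).} For $M \in \N$, let $\phi_M(x) := \max\{-M, \min\{x, M\}\}$ be the clipping map. It is $1$-Lipschitz, so $(\phi_M(X_i) - \phi_M(X_j))^2 \le (X_i - X_j)^2$ for every pair $i,j$. Summing over pairs gives
\[
\hat\sigma_n^2 \ge \hat\sigma_{n,M}^2 ,
\]
where $\hat\sigma_{n,M}^2$ is the sample variance of the bounded variables $\phi_M(X_1), \dots, \phi_M(X_n)$.

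\textbf{Step 2 (SLLN for fixed $M$).} The variables $\phi_M(X_i)$ are bounded. The strong law of large numbers, applied to the first and second empirical moments, gives $\hat\sigma_{n,M}^2 \to v_M := \mathrm{Var}(\phi_M(X))$ almost surely. Intersecting over the countably many $M \in \N$, there is a single probability-one event on which
\[
\liminf_{n\to\infty} \hat\sigma_n^2 \ge v_M \quad \text{for every } M \in \N .
\]

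\textbf{Step 3 ($v_M \to \infty$).} This is the only step needing care, since $\E\phi_M(X)$ need not stay bounded when $\E|X| = \infty$. Let $X'$ be an independent copy of $X$ and write
\[
v_M = \tfrac12\,\E\bigl(\phi_M(X) - \phi_M(X')\bigr)^2 .
\]
As $M \to \infty$, the integrand converges pointwise to $(X - X')^2$. Fatou's lemma then gives
\[
\liminf_{M\to\infty} v_M \ge \tfrac12\,\E(X - X')^2 .
\]
It remains to show $\E(X - X')^2 = \infty$. Suppose instead it were finite. By Fubini, $\E(X - x')^2 < \infty$ for $P$-almost every $x'$, and then $\E X^2 \le 2\E(X - x')^2 + 2x'^2 < \infty$, a contradiction. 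Hence $v_M \to \infty$, and combining with Step 2 yields $\hat\sigma_n^2 \xrightarrow{\mathrm{a.s.}} \infty$.
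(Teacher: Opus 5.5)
Your proof is correct. Its skeleton matches the paper's: the same pairwise representation of $\hat\sigma_n^2$, the same clipping map (the paper's $T_R$), the same termwise comparison, and a strong law at each fixed truncation level. The paper cites Hoeffding's SLLN for U-statistics at that point. You instead note that the clipped V-statistic is just the sample variance of bounded variables, so the ordinary SLLN suffices.

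The real difference is in showing that the truncated variances diverge. The paper argues by contradiction via Chebyshev, $\P(|X_1| \le x) \le \mathrm{Var}(T_R(X_1))/(x - \E T_R(X_1))^2$, and then lets $R \to \infty$. For the right side to vanish one needs $\E T_R(X_1) \to +\infty$. That holds when $\E X^+ = \infty$ and $\E X^- < \infty$. It is not, however, what the Fatou bound $\liminf_R \E|T_R(X_1)| \ge \E|X_1|$ invoked there provides.

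Your route writes $v_M = \frac12\E(\phi_M(X) - \phi_M(X'))^2$, applies Fatou to the symmetrized integrand, and uses Tonelli to get $\E(X - X')^2 = \infty$. It never looks at the truncated mean, so cancellation between the two tails cannot hurt it. It also proves the result under the weaker hypothesis $\E X^2 = \infty$. In particular it covers symmetric heavy-tailed laws such as the Cauchy, where $\E T_R(X_1) = 0$ for every $R$. Such laws fall in the case ($M$ infinite somewhere and $\E|X| = \infty$) where the proof of Proposition~\ref{prop:divergence} invokes this result, so your version is the more robust of the two.
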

\begin{proof}
    Observe that 
    \[
    \hat \sigma_n^2 = \frac1n\sum_{i=1}^n (X_i - \bar X)^2 = \frac{1}{2n^2} \sum_{i,j=1}^n (X_i - X_j)^2.
    \]
    Fix $R > 0$, and let 
    \[
    T_R(x) = \begin{cases}
        R & x > R \\
        x & -R \leq x \leq R \\
        -R & x < -R.
    \end{cases}
    \]
    It then follows that $|X_i - X_j| \geq |T_R(X_i) - T_R(X_j)|$, and therefore by the law of large numbers for U-statistics \citep{hoeffding1961strong}, we have
    \[
     \frac{1}{2n^2} \sum_{i,j=1}^n (X_i - X_j)^2 \geq \frac{1}{2n^2} \sum_{i,j=1}^n (T_R(X_i) - T_R(X_j))^2 \xrightarrow{\mathrm{a.s.}} \mathrm{Var}(T_R(X_1)).
    \]
    It thus suffices to prove that $V_\infty := \liminf_{R \to \infty} \mathrm{Var}(T_R(X_1)) = \infty$; to that end we proceed by contradiction. Suppose that $V_\infty  < \infty$. For any fixed $x \in \R$, pick $R > |x|$. Then if $|X_1| \leq x$, we have $|X_1| \leq R$ so $T_R(X_1) = X_1$ and thus 
    \begin{align}
        \P(|X_1| \leq x) &\leq  \P(|T_R(X_1)| \leq x) \notag \\ &\leq \P(|\E T_R(X_1)-T_R(X_1)| \geq \E T_R(X_1) - x) \notag\\ & \leq \frac{\mathrm{Var}(T_R(X_1))}{(x - \E T_R(X_1))^2}
        \label{eqn:X-trunc-ineq}
    \end{align}
    where the last inequality is Chebyshev's. Now since $T_R(X_1) \to X_1$ pointwise as $R \to \infty$, we have by Fatou's lemma that $\liminf_{R \to \infty} \E|T_R(X_1)| \geq \E |X_1| = \infty$.
    So pick any sequence $R_k \to \infty$ such that $\lim_{k \to \infty} \mathrm{Var}(T_{R_k}(X_1)) =V_\infty$ and take the limit $k \to \infty$ in (\ref{eqn:X-trunc-ineq}) to obtain $\P(X_1 \leq x) = 0$. But $x$ was arbitrary, so this is a contradiction. 
\end{proof}
Now we can prove Proposition~\ref{prop:divergence}.
\begin{proof}[Proof of Proposition~\ref{prop:divergence}]
     First suppose $M$ fails to exist at some $\lambda$, and $\E X < \infty$. Provided $n$ be large enough, $\lambda$ will be contained in $[-C_n, C_n]$, and by a law of large numbers for infinite means (\cite{durrett2019probability}, Theorem 2.3.8), $\tilde M_n(\lambda) \xrightarrow{\mathrm{a.s.}} \infty$, and then 
    \[
    \hat \xi_n^2 \geq 2\lambda^{-2}\psi_n(\lambda)  =  2\lambda^{-2}\log \tilde M_n(\lambda)-2\lambda^{-1}\bar X\xrightarrow{\mathrm{a.s.}}\infty.
    \]
    If $M$ does not exist at $\lambda$ but $\E X=\infty$, then we have by Proposition~\ref{prop:std-divergence} that $\hat \xi_n^2 \geq \hat \sigma_n^2\xrightarrow{\mathrm{a.s.}}\infty$.
    On the other hand, if $M$ exists everywhere, then for any $m > 0$ there is $\lambda$ such that $M(\lambda) > \exp(2^{-1}\lambda^2m^2)$. But $M_n(\lambda) \xrightarrow{\mathrm{a.s.}} M(\lambda)$, so provided $n$ be large enough, $M_n(\lambda) > \exp(2^{-1}\lambda^2m^2) - \eps$ occurs with probability 1. Then we have 
    \[
    \hat \xi^2_n \geq 2\lambda^{-2}\log M_n(\lambda) > m^2,
    \]
    so by the upper continuity of measure we have
    \[
    \P\left(\liminf_{n \to \infty} \hat \xi^2_n =\infty\right) = \P\left(\bigcap_{m=1}^\infty \liminf_{n \to \infty} \hat \xi_n^2 \geq m^2\right) = \lim_{m \to \infty} \P\left(\liminf_{n \to \infty}\hat \xi_n^2 \geq m^2\right) = 1.
    \]
\end{proof}

\begin{proof}[Proof of Proposition~\ref{prop:divergence2}]
For any $\lambda \in [0, C_n]$ we have
    \[
    \hat \xi_n^2 \geq \frac{2}{\lambda^2}\log\left(\frac1n\sum_{i=1}^n e^{\lambda(X_i - \bar X)}\right) \geq \frac{2}{\lambda^2}\log \left(\frac1ne^{(X_{(n)} - \bar X)\lambda}\right) = \frac{2}{\lambda}\left\{(X_{(n)} - \bar X)- \frac{\log n}{\lambda}\right\},
    \]
    and on the other hand if $\lambda \in [-C_n, 0]$ then
    \[
    \hat \xi_n^2 \geq \frac{2}{\lambda^2}\log\left(\frac1n\sum_{i=1}^n e^{\lambda(X_i - \bar X)}\right) \geq \frac{2}{\lambda^2}\log \left(\frac1ne^{(X_{(1)} - \bar X)\lambda}\right) \ge \frac{2}{|\lambda|}\left\{|X_{(1)} - \bar X|- \frac{\log n}{|\lambda|}\right\}.
    \]
    Note that $\Delta_n = | X_{(n)} - \bar{X}| \vee |X_{(1)} - \bar{X}|$. Assume without loss of generality that $\Delta_n = | X_{(n)} - \bar{X}|$. 

    We define $\lambda_n = (X_{(n)} - \bar X)^{-1}(2 \log n)$ and see that $\lambda_n \leq C_n$ under the condition that $\Delta_n \geq \frac{2\log n}{C_n}$. Plugging $\lambda_n$ into the RHS of the first inequality, we have
    \[
    \hat \xi_n^2 \geq \frac{(X_{(n)} - \bar X)^2}{2 \log n} = \frac{\Delta_n^2}{2 \log n}.
    \]
\end{proof}
\subsection{Details of Section 5}
\label{sec:experiments-appendix}
The algorithms were retrieved and implemented as described in \cite{liu2025beyond}. The permutation test was implemented through a modification to the Empirical Pipeline \citep{levi2021domino, liu2025beyond}, and the modified code can be accessed at \texttt{\href{https://github.com/LiuJ0/AMI-Benchmark}{https://github.com/LiuJ0/AMI-Benchmark}}.

\end{document}

%% file: figures/log-log_plot.pgf
%% Creator: Matplotlib, PGF backend
%%
%% To include the figure in your LaTeX document, write
%%   \input{<filename>.pgf}
%%
%% Make sure the required packages are loaded in your preamble
%%   \usepackage{pgf}
%%
%% Also ensure that all the required font packages are loaded; for instance,
%% the lmodern package is sometimes necessary when using math font.
%%   \usepackage{lmodern}
%%
%% Figures using additional raster images can only be included by \input if
%% they are in the same directory as the main LaTeX file. For loading figures
%% from other directories you can use the `import` package
%%   \usepackage{import}
%%
%% and then include the figures with
%%   \import{<path to file>}{<filename>.pgf}
%%
%% Matplotlib used the following preamble
%%   \def\mathdefault#1{#1}
%%   \everymath=\expandafter{\the\everymath\displaystyle}
%%   
%%   \ifdefined\pdftexversion\else  % non-pdftex case.
%%     \usepackage{fontspec}
%%     \setmainfont{DejaVuSerif.ttf}[Path=\detokenize{C:/Users/jl3089/AppData/Local/Programs/Python/Python312/Lib/site-packages/matplotlib/mpl-data/fonts/ttf/}]
%%     \setsansfont{DejaVuSans.ttf}[Path=\detokenize{C:/Users/jl3089/AppData/Local/Programs/Python/Python312/Lib/site-packages/matplotlib/mpl-data/fonts/ttf/}]
%%     \setmonofont{DejaVuSansMono.ttf}[Path=\detokenize{C:/Users/jl3089/AppData/Local/Programs/Python/Python312/Lib/site-packages/matplotlib/mpl-data/fonts/ttf/}]
%%   \fi
%%   \makeatletter\@ifpackageloaded{underscore}{}{\usepackage[strings]{underscore}}\makeatother
%%
\begingroup%
\makeatletter%
\begin{pgfpicture}%
\pgfpathrectangle{\pgfpointorigin}{\pgfqpoint{10.000000in}{6.000000in}}%
\pgfusepath{use as bounding box, clip}%
\begin{pgfscope}%
\pgfsetbuttcap%
\pgfsetmiterjoin%
\definecolor{currentfill}{rgb}{1.000000,1.000000,1.000000}%
\pgfsetfillcolor{currentfill}%
\pgfsetlinewidth{0.000000pt}%
\definecolor{currentstroke}{rgb}{1.000000,1.000000,1.000000}%
\pgfsetstrokecolor{currentstroke}%
\pgfsetdash{}{0pt}%
\pgfpathmoveto{\pgfqpoint{0.000000in}{0.000000in}}%
\pgfpathlineto{\pgfqpoint{10.000000in}{0.000000in}}%
\pgfpathlineto{\pgfqpoint{10.000000in}{6.000000in}}%
\pgfpathlineto{\pgfqpoint{0.000000in}{6.000000in}}%
\pgfpathlineto{\pgfqpoint{0.000000in}{0.000000in}}%
\pgfpathclose%
\pgfusepath{fill}%
\end{pgfscope}%
\begin{pgfscope}%
\pgfsetbuttcap%
\pgfsetmiterjoin%
\definecolor{currentfill}{rgb}{1.000000,1.000000,1.000000}%
\pgfsetfillcolor{currentfill}%
\pgfsetlinewidth{0.000000pt}%
\definecolor{currentstroke}{rgb}{0.000000,0.000000,0.000000}%
\pgfsetstrokecolor{currentstroke}%
\pgfsetstrokeopacity{0.000000}%
\pgfsetdash{}{0pt}%
\pgfpathmoveto{\pgfqpoint{1.342500in}{0.862778in}}%
\pgfpathlineto{\pgfqpoint{9.712602in}{0.862778in}}%
\pgfpathlineto{\pgfqpoint{9.712602in}{5.850000in}}%
\pgfpathlineto{\pgfqpoint{1.342500in}{5.850000in}}%
\pgfpathlineto{\pgfqpoint{1.342500in}{0.862778in}}%
\pgfpathclose%
\pgfusepath{fill}%
\end{pgfscope}%
\begin{pgfscope}%
\pgfpathrectangle{\pgfqpoint{1.342500in}{0.862778in}}{\pgfqpoint{8.370102in}{4.987222in}}%
\pgfusepath{clip}%
\pgfsetbuttcap%
\pgfsetroundjoin%
\pgfsetlinewidth{0.803000pt}%
\definecolor{currentstroke}{rgb}{0.690196,0.690196,0.690196}%
\pgfsetstrokecolor{currentstroke}%
\pgfsetstrokeopacity{0.700000}%
\pgfsetdash{{2.960000pt}{1.280000pt}}{0.000000pt}%
\pgfpathmoveto{\pgfqpoint{2.619918in}{0.862778in}}%
\pgfpathlineto{\pgfqpoint{2.619918in}{5.850000in}}%
\pgfusepath{stroke}%
\end{pgfscope}%
\begin{pgfscope}%
\pgfsetbuttcap%
\pgfsetroundjoin%
\definecolor{currentfill}{rgb}{0.000000,0.000000,0.000000}%
\pgfsetfillcolor{currentfill}%
\pgfsetlinewidth{0.803000pt}%
\definecolor{currentstroke}{rgb}{0.000000,0.000000,0.000000}%
\pgfsetstrokecolor{currentstroke}%
\pgfsetdash{}{0pt}%
\pgfsys@defobject{currentmarker}{\pgfqpoint{0.000000in}{-0.048611in}}{\pgfqpoint{0.000000in}{0.000000in}}{%
\pgfpathmoveto{\pgfqpoint{0.000000in}{0.000000in}}%
\pgfpathlineto{\pgfqpoint{0.000000in}{-0.048611in}}%
\pgfusepath{stroke,fill}%
}%
\begin{pgfscope}%
\pgfsys@transformshift{2.619918in}{0.862778in}%
\pgfsys@useobject{currentmarker}{}%
\end{pgfscope}%
\end{pgfscope}%
\begin{pgfscope}%
\definecolor{textcolor}{rgb}{0.000000,0.000000,0.000000}%
\pgfsetstrokecolor{textcolor}%
\pgfsetfillcolor{textcolor}%
\pgftext[x=2.619918in,y=0.765556in,,top]{\color{textcolor}{\sffamily\fontsize{18.000000}{21.600000}\selectfont\catcode`\^=\active\def^{\ifmmode\sp\else\^{}\fi}\catcode`\%=\active\def%{\%}6.4}}%
\end{pgfscope}%
\begin{pgfscope}%
\pgfpathrectangle{\pgfqpoint{1.342500in}{0.862778in}}{\pgfqpoint{8.370102in}{4.987222in}}%
\pgfusepath{clip}%
\pgfsetbuttcap%
\pgfsetroundjoin%
\pgfsetlinewidth{0.803000pt}%
\definecolor{currentstroke}{rgb}{0.690196,0.690196,0.690196}%
\pgfsetstrokecolor{currentstroke}%
\pgfsetstrokeopacity{0.700000}%
\pgfsetdash{{2.960000pt}{1.280000pt}}{0.000000pt}%
\pgfpathmoveto{\pgfqpoint{4.376355in}{0.862778in}}%
\pgfpathlineto{\pgfqpoint{4.376355in}{5.850000in}}%
\pgfusepath{stroke}%
\end{pgfscope}%
\begin{pgfscope}%
\pgfsetbuttcap%
\pgfsetroundjoin%
\definecolor{currentfill}{rgb}{0.000000,0.000000,0.000000}%
\pgfsetfillcolor{currentfill}%
\pgfsetlinewidth{0.803000pt}%
\definecolor{currentstroke}{rgb}{0.000000,0.000000,0.000000}%
\pgfsetstrokecolor{currentstroke}%
\pgfsetdash{}{0pt}%
\pgfsys@defobject{currentmarker}{\pgfqpoint{0.000000in}{-0.048611in}}{\pgfqpoint{0.000000in}{0.000000in}}{%
\pgfpathmoveto{\pgfqpoint{0.000000in}{0.000000in}}%
\pgfpathlineto{\pgfqpoint{0.000000in}{-0.048611in}}%
\pgfusepath{stroke,fill}%
}%
\begin{pgfscope}%
\pgfsys@transformshift{4.376355in}{0.862778in}%
\pgfsys@useobject{currentmarker}{}%
\end{pgfscope}%
\end{pgfscope}%
\begin{pgfscope}%
\definecolor{textcolor}{rgb}{0.000000,0.000000,0.000000}%
\pgfsetstrokecolor{textcolor}%
\pgfsetfillcolor{textcolor}%
\pgftext[x=4.376355in,y=0.765556in,,top]{\color{textcolor}{\sffamily\fontsize{18.000000}{21.600000}\selectfont\catcode`\^=\active\def^{\ifmmode\sp\else\^{}\fi}\catcode`\%=\active\def%{\%}7.2}}%
\end{pgfscope}%
\begin{pgfscope}%
\pgfpathrectangle{\pgfqpoint{1.342500in}{0.862778in}}{\pgfqpoint{8.370102in}{4.987222in}}%
\pgfusepath{clip}%
\pgfsetbuttcap%
\pgfsetroundjoin%
\pgfsetlinewidth{0.803000pt}%
\definecolor{currentstroke}{rgb}{0.690196,0.690196,0.690196}%
\pgfsetstrokecolor{currentstroke}%
\pgfsetstrokeopacity{0.700000}%
\pgfsetdash{{2.960000pt}{1.280000pt}}{0.000000pt}%
\pgfpathmoveto{\pgfqpoint{6.132792in}{0.862778in}}%
\pgfpathlineto{\pgfqpoint{6.132792in}{5.850000in}}%
\pgfusepath{stroke}%
\end{pgfscope}%
\begin{pgfscope}%
\pgfsetbuttcap%
\pgfsetroundjoin%
\definecolor{currentfill}{rgb}{0.000000,0.000000,0.000000}%
\pgfsetfillcolor{currentfill}%
\pgfsetlinewidth{0.803000pt}%
\definecolor{currentstroke}{rgb}{0.000000,0.000000,0.000000}%
\pgfsetstrokecolor{currentstroke}%
\pgfsetdash{}{0pt}%
\pgfsys@defobject{currentmarker}{\pgfqpoint{0.000000in}{-0.048611in}}{\pgfqpoint{0.000000in}{0.000000in}}{%
\pgfpathmoveto{\pgfqpoint{0.000000in}{0.000000in}}%
\pgfpathlineto{\pgfqpoint{0.000000in}{-0.048611in}}%
\pgfusepath{stroke,fill}%
}%
\begin{pgfscope}%
\pgfsys@transformshift{6.132792in}{0.862778in}%
\pgfsys@useobject{currentmarker}{}%
\end{pgfscope}%
\end{pgfscope}%
\begin{pgfscope}%
\definecolor{textcolor}{rgb}{0.000000,0.000000,0.000000}%
\pgfsetstrokecolor{textcolor}%
\pgfsetfillcolor{textcolor}%
\pgftext[x=6.132792in,y=0.765556in,,top]{\color{textcolor}{\sffamily\fontsize{18.000000}{21.600000}\selectfont\catcode`\^=\active\def^{\ifmmode\sp\else\^{}\fi}\catcode`\%=\active\def%{\%}8.0}}%
\end{pgfscope}%
\begin{pgfscope}%
\pgfpathrectangle{\pgfqpoint{1.342500in}{0.862778in}}{\pgfqpoint{8.370102in}{4.987222in}}%
\pgfusepath{clip}%
\pgfsetbuttcap%
\pgfsetroundjoin%
\pgfsetlinewidth{0.803000pt}%
\definecolor{currentstroke}{rgb}{0.690196,0.690196,0.690196}%
\pgfsetstrokecolor{currentstroke}%
\pgfsetstrokeopacity{0.700000}%
\pgfsetdash{{2.960000pt}{1.280000pt}}{0.000000pt}%
\pgfpathmoveto{\pgfqpoint{7.889229in}{0.862778in}}%
\pgfpathlineto{\pgfqpoint{7.889229in}{5.850000in}}%
\pgfusepath{stroke}%
\end{pgfscope}%
\begin{pgfscope}%
\pgfsetbuttcap%
\pgfsetroundjoin%
\definecolor{currentfill}{rgb}{0.000000,0.000000,0.000000}%
\pgfsetfillcolor{currentfill}%
\pgfsetlinewidth{0.803000pt}%
\definecolor{currentstroke}{rgb}{0.000000,0.000000,0.000000}%
\pgfsetstrokecolor{currentstroke}%
\pgfsetdash{}{0pt}%
\pgfsys@defobject{currentmarker}{\pgfqpoint{0.000000in}{-0.048611in}}{\pgfqpoint{0.000000in}{0.000000in}}{%
\pgfpathmoveto{\pgfqpoint{0.000000in}{0.000000in}}%
\pgfpathlineto{\pgfqpoint{0.000000in}{-0.048611in}}%
\pgfusepath{stroke,fill}%
}%
\begin{pgfscope}%
\pgfsys@transformshift{7.889229in}{0.862778in}%
\pgfsys@useobject{currentmarker}{}%
\end{pgfscope}%
\end{pgfscope}%
\begin{pgfscope}%
\definecolor{textcolor}{rgb}{0.000000,0.000000,0.000000}%
\pgfsetstrokecolor{textcolor}%
\pgfsetfillcolor{textcolor}%
\pgftext[x=7.889229in,y=0.765556in,,top]{\color{textcolor}{\sffamily\fontsize{18.000000}{21.600000}\selectfont\catcode`\^=\active\def^{\ifmmode\sp\else\^{}\fi}\catcode`\%=\active\def%{\%}8.8}}%
\end{pgfscope}%
\begin{pgfscope}%
\pgfpathrectangle{\pgfqpoint{1.342500in}{0.862778in}}{\pgfqpoint{8.370102in}{4.987222in}}%
\pgfusepath{clip}%
\pgfsetbuttcap%
\pgfsetroundjoin%
\pgfsetlinewidth{0.803000pt}%
\definecolor{currentstroke}{rgb}{0.690196,0.690196,0.690196}%
\pgfsetstrokecolor{currentstroke}%
\pgfsetstrokeopacity{0.700000}%
\pgfsetdash{{2.960000pt}{1.280000pt}}{0.000000pt}%
\pgfpathmoveto{\pgfqpoint{9.645666in}{0.862778in}}%
\pgfpathlineto{\pgfqpoint{9.645666in}{5.850000in}}%
\pgfusepath{stroke}%
\end{pgfscope}%
\begin{pgfscope}%
\pgfsetbuttcap%
\pgfsetroundjoin%
\definecolor{currentfill}{rgb}{0.000000,0.000000,0.000000}%
\pgfsetfillcolor{currentfill}%
\pgfsetlinewidth{0.803000pt}%
\definecolor{currentstroke}{rgb}{0.000000,0.000000,0.000000}%
\pgfsetstrokecolor{currentstroke}%
\pgfsetdash{}{0pt}%
\pgfsys@defobject{currentmarker}{\pgfqpoint{0.000000in}{-0.048611in}}{\pgfqpoint{0.000000in}{0.000000in}}{%
\pgfpathmoveto{\pgfqpoint{0.000000in}{0.000000in}}%
\pgfpathlineto{\pgfqpoint{0.000000in}{-0.048611in}}%
\pgfusepath{stroke,fill}%
}%
\begin{pgfscope}%
\pgfsys@transformshift{9.645666in}{0.862778in}%
\pgfsys@useobject{currentmarker}{}%
\end{pgfscope}%
\end{pgfscope}%
\begin{pgfscope}%
\definecolor{textcolor}{rgb}{0.000000,0.000000,0.000000}%
\pgfsetstrokecolor{textcolor}%
\pgfsetfillcolor{textcolor}%
\pgftext[x=9.645666in,y=0.765556in,,top]{\color{textcolor}{\sffamily\fontsize{18.000000}{21.600000}\selectfont\catcode`\^=\active\def^{\ifmmode\sp\else\^{}\fi}\catcode`\%=\active\def%{\%}9.6}}%
\end{pgfscope}%
\begin{pgfscope}%
\definecolor{textcolor}{rgb}{0.000000,0.000000,0.000000}%
\pgfsetstrokecolor{textcolor}%
\pgfsetfillcolor{textcolor}%
\pgftext[x=5.527551in,y=0.468057in,,top]{\color{textcolor}{\sffamily\fontsize{20.000000}{24.000000}\selectfont\catcode`\^=\active\def^{\ifmmode\sp\else\^{}\fi}\catcode`\%=\active\def%{\%}$\log(n)$}}%
\end{pgfscope}%
\begin{pgfscope}%
\pgfpathrectangle{\pgfqpoint{1.342500in}{0.862778in}}{\pgfqpoint{8.370102in}{4.987222in}}%
\pgfusepath{clip}%
\pgfsetbuttcap%
\pgfsetroundjoin%
\pgfsetlinewidth{0.803000pt}%
\definecolor{currentstroke}{rgb}{0.690196,0.690196,0.690196}%
\pgfsetstrokecolor{currentstroke}%
\pgfsetstrokeopacity{0.700000}%
\pgfsetdash{{2.960000pt}{1.280000pt}}{0.000000pt}%
\pgfpathmoveto{\pgfqpoint{1.342500in}{1.047595in}}%
\pgfpathlineto{\pgfqpoint{9.712602in}{1.047595in}}%
\pgfusepath{stroke}%
\end{pgfscope}%
\begin{pgfscope}%
\pgfsetbuttcap%
\pgfsetroundjoin%
\definecolor{currentfill}{rgb}{0.000000,0.000000,0.000000}%
\pgfsetfillcolor{currentfill}%
\pgfsetlinewidth{0.803000pt}%
\definecolor{currentstroke}{rgb}{0.000000,0.000000,0.000000}%
\pgfsetstrokecolor{currentstroke}%
\pgfsetdash{}{0pt}%
\pgfsys@defobject{currentmarker}{\pgfqpoint{-0.048611in}{0.000000in}}{\pgfqpoint{-0.000000in}{0.000000in}}{%
\pgfpathmoveto{\pgfqpoint{-0.000000in}{0.000000in}}%
\pgfpathlineto{\pgfqpoint{-0.048611in}{0.000000in}}%
\pgfusepath{stroke,fill}%
}%
\begin{pgfscope}%
\pgfsys@transformshift{1.342500in}{1.047595in}%
\pgfsys@useobject{currentmarker}{}%
\end{pgfscope}%
\end{pgfscope}%
\begin{pgfscope}%
\definecolor{textcolor}{rgb}{0.000000,0.000000,0.000000}%
\pgfsetstrokecolor{textcolor}%
\pgfsetfillcolor{textcolor}%
\pgftext[x=0.661028in, y=0.952625in, left, base]{\color{textcolor}{\sffamily\fontsize{18.000000}{21.600000}\selectfont\catcode`\^=\active\def^{\ifmmode\sp\else\^{}\fi}\catcode`\%=\active\def%{\%}\ensuremath{-}4.8}}%
\end{pgfscope}%
\begin{pgfscope}%
\pgfpathrectangle{\pgfqpoint{1.342500in}{0.862778in}}{\pgfqpoint{8.370102in}{4.987222in}}%
\pgfusepath{clip}%
\pgfsetbuttcap%
\pgfsetroundjoin%
\pgfsetlinewidth{0.803000pt}%
\definecolor{currentstroke}{rgb}{0.690196,0.690196,0.690196}%
\pgfsetstrokecolor{currentstroke}%
\pgfsetstrokeopacity{0.700000}%
\pgfsetdash{{2.960000pt}{1.280000pt}}{0.000000pt}%
\pgfpathmoveto{\pgfqpoint{1.342500in}{2.124159in}}%
\pgfpathlineto{\pgfqpoint{9.712602in}{2.124159in}}%
\pgfusepath{stroke}%
\end{pgfscope}%
\begin{pgfscope}%
\pgfsetbuttcap%
\pgfsetroundjoin%
\definecolor{currentfill}{rgb}{0.000000,0.000000,0.000000}%
\pgfsetfillcolor{currentfill}%
\pgfsetlinewidth{0.803000pt}%
\definecolor{currentstroke}{rgb}{0.000000,0.000000,0.000000}%
\pgfsetstrokecolor{currentstroke}%
\pgfsetdash{}{0pt}%
\pgfsys@defobject{currentmarker}{\pgfqpoint{-0.048611in}{0.000000in}}{\pgfqpoint{-0.000000in}{0.000000in}}{%
\pgfpathmoveto{\pgfqpoint{-0.000000in}{0.000000in}}%
\pgfpathlineto{\pgfqpoint{-0.048611in}{0.000000in}}%
\pgfusepath{stroke,fill}%
}%
\begin{pgfscope}%
\pgfsys@transformshift{1.342500in}{2.124159in}%
\pgfsys@useobject{currentmarker}{}%
\end{pgfscope}%
\end{pgfscope}%
\begin{pgfscope}%
\definecolor{textcolor}{rgb}{0.000000,0.000000,0.000000}%
\pgfsetstrokecolor{textcolor}%
\pgfsetfillcolor{textcolor}%
\pgftext[x=0.661028in, y=2.029188in, left, base]{\color{textcolor}{\sffamily\fontsize{18.000000}{21.600000}\selectfont\catcode`\^=\active\def^{\ifmmode\sp\else\^{}\fi}\catcode`\%=\active\def%{\%}\ensuremath{-}4.4}}%
\end{pgfscope}%
\begin{pgfscope}%
\pgfpathrectangle{\pgfqpoint{1.342500in}{0.862778in}}{\pgfqpoint{8.370102in}{4.987222in}}%
\pgfusepath{clip}%
\pgfsetbuttcap%
\pgfsetroundjoin%
\pgfsetlinewidth{0.803000pt}%
\definecolor{currentstroke}{rgb}{0.690196,0.690196,0.690196}%
\pgfsetstrokecolor{currentstroke}%
\pgfsetstrokeopacity{0.700000}%
\pgfsetdash{{2.960000pt}{1.280000pt}}{0.000000pt}%
\pgfpathmoveto{\pgfqpoint{1.342500in}{3.200723in}}%
\pgfpathlineto{\pgfqpoint{9.712602in}{3.200723in}}%
\pgfusepath{stroke}%
\end{pgfscope}%
\begin{pgfscope}%
\pgfsetbuttcap%
\pgfsetroundjoin%
\definecolor{currentfill}{rgb}{0.000000,0.000000,0.000000}%
\pgfsetfillcolor{currentfill}%
\pgfsetlinewidth{0.803000pt}%
\definecolor{currentstroke}{rgb}{0.000000,0.000000,0.000000}%
\pgfsetstrokecolor{currentstroke}%
\pgfsetdash{}{0pt}%
\pgfsys@defobject{currentmarker}{\pgfqpoint{-0.048611in}{0.000000in}}{\pgfqpoint{-0.000000in}{0.000000in}}{%
\pgfpathmoveto{\pgfqpoint{-0.000000in}{0.000000in}}%
\pgfpathlineto{\pgfqpoint{-0.048611in}{0.000000in}}%
\pgfusepath{stroke,fill}%
}%
\begin{pgfscope}%
\pgfsys@transformshift{1.342500in}{3.200723in}%
\pgfsys@useobject{currentmarker}{}%
\end{pgfscope}%
\end{pgfscope}%
\begin{pgfscope}%
\definecolor{textcolor}{rgb}{0.000000,0.000000,0.000000}%
\pgfsetstrokecolor{textcolor}%
\pgfsetfillcolor{textcolor}%
\pgftext[x=0.661028in, y=3.105752in, left, base]{\color{textcolor}{\sffamily\fontsize{18.000000}{21.600000}\selectfont\catcode`\^=\active\def^{\ifmmode\sp\else\^{}\fi}\catcode`\%=\active\def%{\%}\ensuremath{-}4.0}}%
\end{pgfscope}%
\begin{pgfscope}%
\pgfpathrectangle{\pgfqpoint{1.342500in}{0.862778in}}{\pgfqpoint{8.370102in}{4.987222in}}%
\pgfusepath{clip}%
\pgfsetbuttcap%
\pgfsetroundjoin%
\pgfsetlinewidth{0.803000pt}%
\definecolor{currentstroke}{rgb}{0.690196,0.690196,0.690196}%
\pgfsetstrokecolor{currentstroke}%
\pgfsetstrokeopacity{0.700000}%
\pgfsetdash{{2.960000pt}{1.280000pt}}{0.000000pt}%
\pgfpathmoveto{\pgfqpoint{1.342500in}{4.277286in}}%
\pgfpathlineto{\pgfqpoint{9.712602in}{4.277286in}}%
\pgfusepath{stroke}%
\end{pgfscope}%
\begin{pgfscope}%
\pgfsetbuttcap%
\pgfsetroundjoin%
\definecolor{currentfill}{rgb}{0.000000,0.000000,0.000000}%
\pgfsetfillcolor{currentfill}%
\pgfsetlinewidth{0.803000pt}%
\definecolor{currentstroke}{rgb}{0.000000,0.000000,0.000000}%
\pgfsetstrokecolor{currentstroke}%
\pgfsetdash{}{0pt}%
\pgfsys@defobject{currentmarker}{\pgfqpoint{-0.048611in}{0.000000in}}{\pgfqpoint{-0.000000in}{0.000000in}}{%
\pgfpathmoveto{\pgfqpoint{-0.000000in}{0.000000in}}%
\pgfpathlineto{\pgfqpoint{-0.048611in}{0.000000in}}%
\pgfusepath{stroke,fill}%
}%
\begin{pgfscope}%
\pgfsys@transformshift{1.342500in}{4.277286in}%
\pgfsys@useobject{currentmarker}{}%
\end{pgfscope}%
\end{pgfscope}%
\begin{pgfscope}%
\definecolor{textcolor}{rgb}{0.000000,0.000000,0.000000}%
\pgfsetstrokecolor{textcolor}%
\pgfsetfillcolor{textcolor}%
\pgftext[x=0.661028in, y=4.182316in, left, base]{\color{textcolor}{\sffamily\fontsize{18.000000}{21.600000}\selectfont\catcode`\^=\active\def^{\ifmmode\sp\else\^{}\fi}\catcode`\%=\active\def%{\%}\ensuremath{-}3.6}}%
\end{pgfscope}%
\begin{pgfscope}%
\pgfpathrectangle{\pgfqpoint{1.342500in}{0.862778in}}{\pgfqpoint{8.370102in}{4.987222in}}%
\pgfusepath{clip}%
\pgfsetbuttcap%
\pgfsetroundjoin%
\pgfsetlinewidth{0.803000pt}%
\definecolor{currentstroke}{rgb}{0.690196,0.690196,0.690196}%
\pgfsetstrokecolor{currentstroke}%
\pgfsetstrokeopacity{0.700000}%
\pgfsetdash{{2.960000pt}{1.280000pt}}{0.000000pt}%
\pgfpathmoveto{\pgfqpoint{1.342500in}{5.353850in}}%
\pgfpathlineto{\pgfqpoint{9.712602in}{5.353850in}}%
\pgfusepath{stroke}%
\end{pgfscope}%
\begin{pgfscope}%
\pgfsetbuttcap%
\pgfsetroundjoin%
\definecolor{currentfill}{rgb}{0.000000,0.000000,0.000000}%
\pgfsetfillcolor{currentfill}%
\pgfsetlinewidth{0.803000pt}%
\definecolor{currentstroke}{rgb}{0.000000,0.000000,0.000000}%
\pgfsetstrokecolor{currentstroke}%
\pgfsetdash{}{0pt}%
\pgfsys@defobject{currentmarker}{\pgfqpoint{-0.048611in}{0.000000in}}{\pgfqpoint{-0.000000in}{0.000000in}}{%
\pgfpathmoveto{\pgfqpoint{-0.000000in}{0.000000in}}%
\pgfpathlineto{\pgfqpoint{-0.048611in}{0.000000in}}%
\pgfusepath{stroke,fill}%
}%
\begin{pgfscope}%
\pgfsys@transformshift{1.342500in}{5.353850in}%
\pgfsys@useobject{currentmarker}{}%
\end{pgfscope}%
\end{pgfscope}%
\begin{pgfscope}%
\definecolor{textcolor}{rgb}{0.000000,0.000000,0.000000}%
\pgfsetstrokecolor{textcolor}%
\pgfsetfillcolor{textcolor}%
\pgftext[x=0.661028in, y=5.258879in, left, base]{\color{textcolor}{\sffamily\fontsize{18.000000}{21.600000}\selectfont\catcode`\^=\active\def^{\ifmmode\sp\else\^{}\fi}\catcode`\%=\active\def%{\%}\ensuremath{-}3.2}}%
\end{pgfscope}%
\begin{pgfscope}%
\definecolor{textcolor}{rgb}{0.000000,0.000000,0.000000}%
\pgfsetstrokecolor{textcolor}%
\pgfsetfillcolor{textcolor}%
\pgftext[x=0.605472in,y=3.356389in,,bottom,rotate=90.000000]{\color{textcolor}{\sffamily\fontsize{20.000000}{24.000000}\selectfont\catcode`\^=\active\def^{\ifmmode\sp\else\^{}\fi}\catcode`\%=\active\def%{\%}$\log($Average $|\hat{\xi}_n - 1|)$}}%
\end{pgfscope}%
\begin{pgfscope}%
\pgfpathrectangle{\pgfqpoint{1.342500in}{0.862778in}}{\pgfqpoint{8.370102in}{4.987222in}}%
\pgfusepath{clip}%
\pgfsetrectcap%
\pgfsetroundjoin%
\pgfsetlinewidth{2.007500pt}%
\definecolor{currentstroke}{rgb}{0.235294,0.701961,0.443137}%
\pgfsetstrokecolor{currentstroke}%
\pgfsetdash{}{0pt}%
\pgfpathmoveto{\pgfqpoint{1.722959in}{5.129013in}}%
\pgfpathlineto{\pgfqpoint{3.244796in}{4.416219in}}%
\pgfpathlineto{\pgfqpoint{4.766633in}{3.648006in}}%
\pgfpathlineto{\pgfqpoint{6.288469in}{2.917547in}}%
\pgfpathlineto{\pgfqpoint{7.810306in}{2.161189in}}%
\pgfpathlineto{\pgfqpoint{9.332143in}{1.089470in}}%
\pgfusepath{stroke}%
\end{pgfscope}%
\begin{pgfscope}%
\pgfpathrectangle{\pgfqpoint{1.342500in}{0.862778in}}{\pgfqpoint{8.370102in}{4.987222in}}%
\pgfusepath{clip}%
\pgfsetbuttcap%
\pgfsetroundjoin%
\definecolor{currentfill}{rgb}{0.235294,0.701961,0.443137}%
\pgfsetfillcolor{currentfill}%
\pgfsetlinewidth{1.003750pt}%
\definecolor{currentstroke}{rgb}{0.235294,0.701961,0.443137}%
\pgfsetstrokecolor{currentstroke}%
\pgfsetdash{}{0pt}%
\pgfsys@defobject{currentmarker}{\pgfqpoint{-0.055556in}{-0.055556in}}{\pgfqpoint{0.055556in}{0.055556in}}{%
\pgfpathmoveto{\pgfqpoint{0.000000in}{-0.055556in}}%
\pgfpathcurveto{\pgfqpoint{0.014734in}{-0.055556in}}{\pgfqpoint{0.028866in}{-0.049702in}}{\pgfqpoint{0.039284in}{-0.039284in}}%
\pgfpathcurveto{\pgfqpoint{0.049702in}{-0.028866in}}{\pgfqpoint{0.055556in}{-0.014734in}}{\pgfqpoint{0.055556in}{0.000000in}}%
\pgfpathcurveto{\pgfqpoint{0.055556in}{0.014734in}}{\pgfqpoint{0.049702in}{0.028866in}}{\pgfqpoint{0.039284in}{0.039284in}}%
\pgfpathcurveto{\pgfqpoint{0.028866in}{0.049702in}}{\pgfqpoint{0.014734in}{0.055556in}}{\pgfqpoint{0.000000in}{0.055556in}}%
\pgfpathcurveto{\pgfqpoint{-0.014734in}{0.055556in}}{\pgfqpoint{-0.028866in}{0.049702in}}{\pgfqpoint{-0.039284in}{0.039284in}}%
\pgfpathcurveto{\pgfqpoint{-0.049702in}{0.028866in}}{\pgfqpoint{-0.055556in}{0.014734in}}{\pgfqpoint{-0.055556in}{0.000000in}}%
\pgfpathcurveto{\pgfqpoint{-0.055556in}{-0.014734in}}{\pgfqpoint{-0.049702in}{-0.028866in}}{\pgfqpoint{-0.039284in}{-0.039284in}}%
\pgfpathcurveto{\pgfqpoint{-0.028866in}{-0.049702in}}{\pgfqpoint{-0.014734in}{-0.055556in}}{\pgfqpoint{0.000000in}{-0.055556in}}%
\pgfpathlineto{\pgfqpoint{0.000000in}{-0.055556in}}%
\pgfpathclose%
\pgfusepath{stroke,fill}%
}%
\begin{pgfscope}%
\pgfsys@transformshift{1.722959in}{5.129013in}%
\pgfsys@useobject{currentmarker}{}%
\end{pgfscope}%
\begin{pgfscope}%
\pgfsys@transformshift{3.244796in}{4.416219in}%
\pgfsys@useobject{currentmarker}{}%
\end{pgfscope}%
\begin{pgfscope}%
\pgfsys@transformshift{4.766633in}{3.648006in}%
\pgfsys@useobject{currentmarker}{}%
\end{pgfscope}%
\begin{pgfscope}%
\pgfsys@transformshift{6.288469in}{2.917547in}%
\pgfsys@useobject{currentmarker}{}%
\end{pgfscope}%
\begin{pgfscope}%
\pgfsys@transformshift{7.810306in}{2.161189in}%
\pgfsys@useobject{currentmarker}{}%
\end{pgfscope}%
\begin{pgfscope}%
\pgfsys@transformshift{9.332143in}{1.089470in}%
\pgfsys@useobject{currentmarker}{}%
\end{pgfscope}%
\end{pgfscope}%
\begin{pgfscope}%
\pgfpathrectangle{\pgfqpoint{1.342500in}{0.862778in}}{\pgfqpoint{8.370102in}{4.987222in}}%
\pgfusepath{clip}%
\pgfsetrectcap%
\pgfsetroundjoin%
\pgfsetlinewidth{2.007500pt}%
\definecolor{currentstroke}{rgb}{0.866667,0.627451,0.866667}%
\pgfsetstrokecolor{currentstroke}%
\pgfsetdash{}{0pt}%
\pgfpathmoveto{\pgfqpoint{1.722959in}{5.623308in}}%
\pgfpathlineto{\pgfqpoint{3.244796in}{5.130555in}}%
\pgfpathlineto{\pgfqpoint{4.766633in}{4.577926in}}%
\pgfpathlineto{\pgfqpoint{6.288469in}{4.398924in}}%
\pgfpathlineto{\pgfqpoint{7.810306in}{3.957899in}}%
\pgfpathlineto{\pgfqpoint{9.332143in}{3.158012in}}%
\pgfusepath{stroke}%
\end{pgfscope}%
\begin{pgfscope}%
\pgfpathrectangle{\pgfqpoint{1.342500in}{0.862778in}}{\pgfqpoint{8.370102in}{4.987222in}}%
\pgfusepath{clip}%
\pgfsetbuttcap%
\pgfsetmiterjoin%
\definecolor{currentfill}{rgb}{0.866667,0.627451,0.866667}%
\pgfsetfillcolor{currentfill}%
\pgfsetlinewidth{1.003750pt}%
\definecolor{currentstroke}{rgb}{0.866667,0.627451,0.866667}%
\pgfsetstrokecolor{currentstroke}%
\pgfsetdash{}{0pt}%
\pgfsys@defobject{currentmarker}{\pgfqpoint{-0.055556in}{-0.055556in}}{\pgfqpoint{0.055556in}{0.055556in}}{%
\pgfpathmoveto{\pgfqpoint{-0.055556in}{-0.055556in}}%
\pgfpathlineto{\pgfqpoint{0.055556in}{-0.055556in}}%
\pgfpathlineto{\pgfqpoint{0.055556in}{0.055556in}}%
\pgfpathlineto{\pgfqpoint{-0.055556in}{0.055556in}}%
\pgfpathlineto{\pgfqpoint{-0.055556in}{-0.055556in}}%
\pgfpathclose%
\pgfusepath{stroke,fill}%
}%
\begin{pgfscope}%
\pgfsys@transformshift{1.722959in}{5.623308in}%
\pgfsys@useobject{currentmarker}{}%
\end{pgfscope}%
\begin{pgfscope}%
\pgfsys@transformshift{3.244796in}{5.130555in}%
\pgfsys@useobject{currentmarker}{}%
\end{pgfscope}%
\begin{pgfscope}%
\pgfsys@transformshift{4.766633in}{4.577926in}%
\pgfsys@useobject{currentmarker}{}%
\end{pgfscope}%
\begin{pgfscope}%
\pgfsys@transformshift{6.288469in}{4.398924in}%
\pgfsys@useobject{currentmarker}{}%
\end{pgfscope}%
\begin{pgfscope}%
\pgfsys@transformshift{7.810306in}{3.957899in}%
\pgfsys@useobject{currentmarker}{}%
\end{pgfscope}%
\begin{pgfscope}%
\pgfsys@transformshift{9.332143in}{3.158012in}%
\pgfsys@useobject{currentmarker}{}%
\end{pgfscope}%
\end{pgfscope}%
\begin{pgfscope}%
\pgfsetrectcap%
\pgfsetmiterjoin%
\pgfsetlinewidth{0.803000pt}%
\definecolor{currentstroke}{rgb}{0.000000,0.000000,0.000000}%
\pgfsetstrokecolor{currentstroke}%
\pgfsetdash{}{0pt}%
\pgfpathmoveto{\pgfqpoint{1.342500in}{0.862778in}}%
\pgfpathlineto{\pgfqpoint{1.342500in}{5.850000in}}%
\pgfusepath{stroke}%
\end{pgfscope}%
\begin{pgfscope}%
\pgfsetrectcap%
\pgfsetmiterjoin%
\pgfsetlinewidth{0.803000pt}%
\definecolor{currentstroke}{rgb}{0.000000,0.000000,0.000000}%
\pgfsetstrokecolor{currentstroke}%
\pgfsetdash{}{0pt}%
\pgfpathmoveto{\pgfqpoint{9.712602in}{0.862778in}}%
\pgfpathlineto{\pgfqpoint{9.712602in}{5.850000in}}%
\pgfusepath{stroke}%
\end{pgfscope}%
\begin{pgfscope}%
\pgfsetrectcap%
\pgfsetmiterjoin%
\pgfsetlinewidth{0.803000pt}%
\definecolor{currentstroke}{rgb}{0.000000,0.000000,0.000000}%
\pgfsetstrokecolor{currentstroke}%
\pgfsetdash{}{0pt}%
\pgfpathmoveto{\pgfqpoint{1.342500in}{0.862778in}}%
\pgfpathlineto{\pgfqpoint{9.712602in}{0.862778in}}%
\pgfusepath{stroke}%
\end{pgfscope}%
\begin{pgfscope}%
\pgfsetrectcap%
\pgfsetmiterjoin%
\pgfsetlinewidth{0.803000pt}%
\definecolor{currentstroke}{rgb}{0.000000,0.000000,0.000000}%
\pgfsetstrokecolor{currentstroke}%
\pgfsetdash{}{0pt}%
\pgfpathmoveto{\pgfqpoint{1.342500in}{5.850000in}}%
\pgfpathlineto{\pgfqpoint{9.712602in}{5.850000in}}%
\pgfusepath{stroke}%
\end{pgfscope}%
\begin{pgfscope}%
\pgfsetbuttcap%
\pgfsetmiterjoin%
\definecolor{currentfill}{rgb}{1.000000,1.000000,1.000000}%
\pgfsetfillcolor{currentfill}%
\pgfsetfillopacity{0.800000}%
\pgfsetlinewidth{1.003750pt}%
\definecolor{currentstroke}{rgb}{0.800000,0.800000,0.800000}%
\pgfsetstrokecolor{currentstroke}%
\pgfsetstrokeopacity{0.800000}%
\pgfsetdash{}{0pt}%
\pgfpathmoveto{\pgfqpoint{6.229205in}{4.615983in}}%
\pgfpathlineto{\pgfqpoint{9.518158in}{4.615983in}}%
\pgfpathquadraticcurveto{\pgfqpoint{9.573713in}{4.615983in}}{\pgfqpoint{9.573713in}{4.671539in}}%
\pgfpathlineto{\pgfqpoint{9.573713in}{5.655556in}}%
\pgfpathquadraticcurveto{\pgfqpoint{9.573713in}{5.711111in}}{\pgfqpoint{9.518158in}{5.711111in}}%
\pgfpathlineto{\pgfqpoint{6.229205in}{5.711111in}}%
\pgfpathquadraticcurveto{\pgfqpoint{6.173650in}{5.711111in}}{\pgfqpoint{6.173650in}{5.655556in}}%
\pgfpathlineto{\pgfqpoint{6.173650in}{4.671539in}}%
\pgfpathquadraticcurveto{\pgfqpoint{6.173650in}{4.615983in}}{\pgfqpoint{6.229205in}{4.615983in}}%
\pgfpathlineto{\pgfqpoint{6.229205in}{4.615983in}}%
\pgfpathclose%
\pgfusepath{stroke,fill}%
\end{pgfscope}%
\begin{pgfscope}%
\pgfsetrectcap%
\pgfsetroundjoin%
\pgfsetlinewidth{2.007500pt}%
\definecolor{currentstroke}{rgb}{0.235294,0.701961,0.443137}%
\pgfsetstrokecolor{currentstroke}%
\pgfsetdash{}{0pt}%
\pgfpathmoveto{\pgfqpoint{6.284761in}{5.421151in}}%
\pgfpathlineto{\pgfqpoint{6.562539in}{5.421151in}}%
\pgfpathlineto{\pgfqpoint{6.840316in}{5.421151in}}%
\pgfusepath{stroke}%
\end{pgfscope}%
\begin{pgfscope}%
\pgfsetbuttcap%
\pgfsetroundjoin%
\definecolor{currentfill}{rgb}{0.235294,0.701961,0.443137}%
\pgfsetfillcolor{currentfill}%
\pgfsetlinewidth{1.003750pt}%
\definecolor{currentstroke}{rgb}{0.235294,0.701961,0.443137}%
\pgfsetstrokecolor{currentstroke}%
\pgfsetdash{}{0pt}%
\pgfsys@defobject{currentmarker}{\pgfqpoint{-0.055556in}{-0.055556in}}{\pgfqpoint{0.055556in}{0.055556in}}{%
\pgfpathmoveto{\pgfqpoint{0.000000in}{-0.055556in}}%
\pgfpathcurveto{\pgfqpoint{0.014734in}{-0.055556in}}{\pgfqpoint{0.028866in}{-0.049702in}}{\pgfqpoint{0.039284in}{-0.039284in}}%
\pgfpathcurveto{\pgfqpoint{0.049702in}{-0.028866in}}{\pgfqpoint{0.055556in}{-0.014734in}}{\pgfqpoint{0.055556in}{0.000000in}}%
\pgfpathcurveto{\pgfqpoint{0.055556in}{0.014734in}}{\pgfqpoint{0.049702in}{0.028866in}}{\pgfqpoint{0.039284in}{0.039284in}}%
\pgfpathcurveto{\pgfqpoint{0.028866in}{0.049702in}}{\pgfqpoint{0.014734in}{0.055556in}}{\pgfqpoint{0.000000in}{0.055556in}}%
\pgfpathcurveto{\pgfqpoint{-0.014734in}{0.055556in}}{\pgfqpoint{-0.028866in}{0.049702in}}{\pgfqpoint{-0.039284in}{0.039284in}}%
\pgfpathcurveto{\pgfqpoint{-0.049702in}{0.028866in}}{\pgfqpoint{-0.055556in}{0.014734in}}{\pgfqpoint{-0.055556in}{0.000000in}}%
\pgfpathcurveto{\pgfqpoint{-0.055556in}{-0.014734in}}{\pgfqpoint{-0.049702in}{-0.028866in}}{\pgfqpoint{-0.039284in}{-0.039284in}}%
\pgfpathcurveto{\pgfqpoint{-0.028866in}{-0.049702in}}{\pgfqpoint{-0.014734in}{-0.055556in}}{\pgfqpoint{0.000000in}{-0.055556in}}%
\pgfpathlineto{\pgfqpoint{0.000000in}{-0.055556in}}%
\pgfpathclose%
\pgfusepath{stroke,fill}%
}%
\begin{pgfscope}%
\pgfsys@transformshift{6.562539in}{5.421151in}%
\pgfsys@useobject{currentmarker}{}%
\end{pgfscope}%
\end{pgfscope}%
\begin{pgfscope}%
\definecolor{textcolor}{rgb}{0.000000,0.000000,0.000000}%
\pgfsetstrokecolor{textcolor}%
\pgfsetfillcolor{textcolor}%
\pgftext[x=7.062539in,y=5.323929in,left,base]{\color{textcolor}{\sffamily\fontsize{20.000000}{24.000000}\selectfont\catcode`\^=\active\def^{\ifmmode\sp\else\^{}\fi}\catcode`\%=\active\def%{\%}$\hat \xi_n^2$ ($C_n = (\log n)^{1/4}$)}}%
\end{pgfscope}%
\begin{pgfscope}%
\pgfsetrectcap%
\pgfsetroundjoin%
\pgfsetlinewidth{2.007500pt}%
\definecolor{currentstroke}{rgb}{0.866667,0.627451,0.866667}%
\pgfsetstrokecolor{currentstroke}%
\pgfsetdash{}{0pt}%
\pgfpathmoveto{\pgfqpoint{6.284761in}{4.934418in}}%
\pgfpathlineto{\pgfqpoint{6.562539in}{4.934418in}}%
\pgfpathlineto{\pgfqpoint{6.840316in}{4.934418in}}%
\pgfusepath{stroke}%
\end{pgfscope}%
\begin{pgfscope}%
\pgfsetbuttcap%
\pgfsetmiterjoin%
\definecolor{currentfill}{rgb}{0.866667,0.627451,0.866667}%
\pgfsetfillcolor{currentfill}%
\pgfsetlinewidth{1.003750pt}%
\definecolor{currentstroke}{rgb}{0.866667,0.627451,0.866667}%
\pgfsetstrokecolor{currentstroke}%
\pgfsetdash{}{0pt}%
\pgfsys@defobject{currentmarker}{\pgfqpoint{-0.055556in}{-0.055556in}}{\pgfqpoint{0.055556in}{0.055556in}}{%
\pgfpathmoveto{\pgfqpoint{-0.055556in}{-0.055556in}}%
\pgfpathlineto{\pgfqpoint{0.055556in}{-0.055556in}}%
\pgfpathlineto{\pgfqpoint{0.055556in}{0.055556in}}%
\pgfpathlineto{\pgfqpoint{-0.055556in}{0.055556in}}%
\pgfpathlineto{\pgfqpoint{-0.055556in}{-0.055556in}}%
\pgfpathclose%
\pgfusepath{stroke,fill}%
}%
\begin{pgfscope}%
\pgfsys@transformshift{6.562539in}{4.934418in}%
\pgfsys@useobject{currentmarker}{}%
\end{pgfscope}%
\end{pgfscope}%
\begin{pgfscope}%
\definecolor{textcolor}{rgb}{0.000000,0.000000,0.000000}%
\pgfsetstrokecolor{textcolor}%
\pgfsetfillcolor{textcolor}%
\pgftext[x=7.062539in,y=4.837195in,left,base]{\color{textcolor}{\sffamily\fontsize{20.000000}{24.000000}\selectfont\catcode`\^=\active\def^{\ifmmode\sp\else\^{}\fi}\catcode`\%=\active\def%{\%}$\hat \xi_{n, U}^2$ ($C_n = \infty$)}}%
\end{pgfscope}%
\end{pgfpicture}%
\makeatother%
\endgroup%

%% file: figures/L_plots.pgf
%% Creator: Matplotlib, PGF backend
%%
%% To include the figure in your LaTeX document, write
%%   \input{<filename>.pgf}
%%
%% Make sure the required packages are loaded in your preamble
%%   \usepackage{pgf}
%%
%% Also ensure that all the required font packages are loaded; for instance,
%% the lmodern package is sometimes necessary when using math font.
%%   \usepackage{lmodern}
%%
%% Figures using additional raster images can only be included by \input if
%% they are in the same directory as the main LaTeX file. For loading figures
%% from other directories you can use the `import` package
%%   \usepackage{import}
%%
%% and then include the figures with
%%   \import{<path to file>}{<filename>.pgf}
%%
%% Matplotlib used the following preamble
%%   \def\mathdefault#1{#1}
%%   \everymath=\expandafter{\the\everymath\displaystyle}
%%   
%%   \ifdefined\pdftexversion\else  % non-pdftex case.
%%     \usepackage{fontspec}
%%     \setmainfont{DejaVuSerif.ttf}[Path=\detokenize{C:/Users/jl3089/AppData/Local/Programs/Python/Python312/Lib/site-packages/matplotlib/mpl-data/fonts/ttf/}]
%%     \setsansfont{DejaVuSans.ttf}[Path=\detokenize{C:/Users/jl3089/AppData/Local/Programs/Python/Python312/Lib/site-packages/matplotlib/mpl-data/fonts/ttf/}]
%%     \setmonofont{DejaVuSansMono.ttf}[Path=\detokenize{C:/Users/jl3089/AppData/Local/Programs/Python/Python312/Lib/site-packages/matplotlib/mpl-data/fonts/ttf/}]
%%   \fi
%%   \makeatletter\@ifpackageloaded{underscore}{}{\usepackage[strings]{underscore}}\makeatother
%%
\begingroup%
\makeatletter%
\begin{pgfpicture}%
\pgfpathrectangle{\pgfpointorigin}{\pgfqpoint{6.060000in}{4.926667in}}%
\pgfusepath{use as bounding box, clip}%
\begin{pgfscope}%
\pgfsetbuttcap%
\pgfsetmiterjoin%
\definecolor{currentfill}{rgb}{1.000000,1.000000,1.000000}%
\pgfsetfillcolor{currentfill}%
\pgfsetlinewidth{0.000000pt}%
\definecolor{currentstroke}{rgb}{1.000000,1.000000,1.000000}%
\pgfsetstrokecolor{currentstroke}%
\pgfsetdash{}{0pt}%
\pgfpathmoveto{\pgfqpoint{0.000000in}{0.000000in}}%
\pgfpathlineto{\pgfqpoint{6.060000in}{0.000000in}}%
\pgfpathlineto{\pgfqpoint{6.060000in}{4.926667in}}%
\pgfpathlineto{\pgfqpoint{0.000000in}{4.926667in}}%
\pgfpathlineto{\pgfqpoint{0.000000in}{0.000000in}}%
\pgfpathclose%
\pgfusepath{fill}%
\end{pgfscope}%
\begin{pgfscope}%
\pgfsetbuttcap%
\pgfsetmiterjoin%
\definecolor{currentfill}{rgb}{1.000000,1.000000,1.000000}%
\pgfsetfillcolor{currentfill}%
\pgfsetlinewidth{0.000000pt}%
\definecolor{currentstroke}{rgb}{0.000000,0.000000,0.000000}%
\pgfsetstrokecolor{currentstroke}%
\pgfsetstrokeopacity{0.000000}%
\pgfsetdash{}{0pt}%
\pgfpathmoveto{\pgfqpoint{0.527620in}{0.731884in}}%
\pgfpathlineto{\pgfqpoint{5.900925in}{0.731884in}}%
\pgfpathlineto{\pgfqpoint{5.900925in}{4.719747in}}%
\pgfpathlineto{\pgfqpoint{0.527620in}{4.719747in}}%
\pgfpathlineto{\pgfqpoint{0.527620in}{0.731884in}}%
\pgfpathclose%
\pgfusepath{fill}%
\end{pgfscope}%
\begin{pgfscope}%
\pgfsetbuttcap%
\pgfsetroundjoin%
\definecolor{currentfill}{rgb}{0.000000,0.000000,0.000000}%
\pgfsetfillcolor{currentfill}%
\pgfsetlinewidth{0.803000pt}%
\definecolor{currentstroke}{rgb}{0.000000,0.000000,0.000000}%
\pgfsetstrokecolor{currentstroke}%
\pgfsetdash{}{0pt}%
\pgfsys@defobject{currentmarker}{\pgfqpoint{0.000000in}{-0.048611in}}{\pgfqpoint{0.000000in}{0.000000in}}{%
\pgfpathmoveto{\pgfqpoint{0.000000in}{0.000000in}}%
\pgfpathlineto{\pgfqpoint{0.000000in}{-0.048611in}}%
\pgfusepath{stroke,fill}%
}%
\begin{pgfscope}%
\pgfsys@transformshift{1.064950in}{0.731884in}%
\pgfsys@useobject{currentmarker}{}%
\end{pgfscope}%
\end{pgfscope}%
\begin{pgfscope}%
\definecolor{textcolor}{rgb}{0.000000,0.000000,0.000000}%
\pgfsetstrokecolor{textcolor}%
\pgfsetfillcolor{textcolor}%
\pgftext[x=1.064950in,y=0.634661in,,top]{\color{textcolor}{\sffamily\fontsize{14.000000}{16.800000}\selectfont\catcode`\^=\active\def^{\ifmmode\sp\else\^{}\fi}\catcode`\%=\active\def%{\%}\ensuremath{-}4}}%
\end{pgfscope}%
\begin{pgfscope}%
\pgfsetbuttcap%
\pgfsetroundjoin%
\definecolor{currentfill}{rgb}{0.000000,0.000000,0.000000}%
\pgfsetfillcolor{currentfill}%
\pgfsetlinewidth{0.803000pt}%
\definecolor{currentstroke}{rgb}{0.000000,0.000000,0.000000}%
\pgfsetstrokecolor{currentstroke}%
\pgfsetdash{}{0pt}%
\pgfsys@defobject{currentmarker}{\pgfqpoint{0.000000in}{-0.048611in}}{\pgfqpoint{0.000000in}{0.000000in}}{%
\pgfpathmoveto{\pgfqpoint{0.000000in}{0.000000in}}%
\pgfpathlineto{\pgfqpoint{0.000000in}{-0.048611in}}%
\pgfusepath{stroke,fill}%
}%
\begin{pgfscope}%
\pgfsys@transformshift{2.139611in}{0.731884in}%
\pgfsys@useobject{currentmarker}{}%
\end{pgfscope}%
\end{pgfscope}%
\begin{pgfscope}%
\definecolor{textcolor}{rgb}{0.000000,0.000000,0.000000}%
\pgfsetstrokecolor{textcolor}%
\pgfsetfillcolor{textcolor}%
\pgftext[x=2.139611in,y=0.634661in,,top]{\color{textcolor}{\sffamily\fontsize{14.000000}{16.800000}\selectfont\catcode`\^=\active\def^{\ifmmode\sp\else\^{}\fi}\catcode`\%=\active\def%{\%}\ensuremath{-}2}}%
\end{pgfscope}%
\begin{pgfscope}%
\pgfsetbuttcap%
\pgfsetroundjoin%
\definecolor{currentfill}{rgb}{0.000000,0.000000,0.000000}%
\pgfsetfillcolor{currentfill}%
\pgfsetlinewidth{0.803000pt}%
\definecolor{currentstroke}{rgb}{0.000000,0.000000,0.000000}%
\pgfsetstrokecolor{currentstroke}%
\pgfsetdash{}{0pt}%
\pgfsys@defobject{currentmarker}{\pgfqpoint{0.000000in}{-0.048611in}}{\pgfqpoint{0.000000in}{0.000000in}}{%
\pgfpathmoveto{\pgfqpoint{0.000000in}{0.000000in}}%
\pgfpathlineto{\pgfqpoint{0.000000in}{-0.048611in}}%
\pgfusepath{stroke,fill}%
}%
\begin{pgfscope}%
\pgfsys@transformshift{3.214272in}{0.731884in}%
\pgfsys@useobject{currentmarker}{}%
\end{pgfscope}%
\end{pgfscope}%
\begin{pgfscope}%
\definecolor{textcolor}{rgb}{0.000000,0.000000,0.000000}%
\pgfsetstrokecolor{textcolor}%
\pgfsetfillcolor{textcolor}%
\pgftext[x=3.214272in,y=0.634661in,,top]{\color{textcolor}{\sffamily\fontsize{14.000000}{16.800000}\selectfont\catcode`\^=\active\def^{\ifmmode\sp\else\^{}\fi}\catcode`\%=\active\def%{\%}0}}%
\end{pgfscope}%
\begin{pgfscope}%
\pgfsetbuttcap%
\pgfsetroundjoin%
\definecolor{currentfill}{rgb}{0.000000,0.000000,0.000000}%
\pgfsetfillcolor{currentfill}%
\pgfsetlinewidth{0.803000pt}%
\definecolor{currentstroke}{rgb}{0.000000,0.000000,0.000000}%
\pgfsetstrokecolor{currentstroke}%
\pgfsetdash{}{0pt}%
\pgfsys@defobject{currentmarker}{\pgfqpoint{0.000000in}{-0.048611in}}{\pgfqpoint{0.000000in}{0.000000in}}{%
\pgfpathmoveto{\pgfqpoint{0.000000in}{0.000000in}}%
\pgfpathlineto{\pgfqpoint{0.000000in}{-0.048611in}}%
\pgfusepath{stroke,fill}%
}%
\begin{pgfscope}%
\pgfsys@transformshift{4.288933in}{0.731884in}%
\pgfsys@useobject{currentmarker}{}%
\end{pgfscope}%
\end{pgfscope}%
\begin{pgfscope}%
\definecolor{textcolor}{rgb}{0.000000,0.000000,0.000000}%
\pgfsetstrokecolor{textcolor}%
\pgfsetfillcolor{textcolor}%
\pgftext[x=4.288933in,y=0.634661in,,top]{\color{textcolor}{\sffamily\fontsize{14.000000}{16.800000}\selectfont\catcode`\^=\active\def^{\ifmmode\sp\else\^{}\fi}\catcode`\%=\active\def%{\%}2}}%
\end{pgfscope}%
\begin{pgfscope}%
\pgfsetbuttcap%
\pgfsetroundjoin%
\definecolor{currentfill}{rgb}{0.000000,0.000000,0.000000}%
\pgfsetfillcolor{currentfill}%
\pgfsetlinewidth{0.803000pt}%
\definecolor{currentstroke}{rgb}{0.000000,0.000000,0.000000}%
\pgfsetstrokecolor{currentstroke}%
\pgfsetdash{}{0pt}%
\pgfsys@defobject{currentmarker}{\pgfqpoint{0.000000in}{-0.048611in}}{\pgfqpoint{0.000000in}{0.000000in}}{%
\pgfpathmoveto{\pgfqpoint{0.000000in}{0.000000in}}%
\pgfpathlineto{\pgfqpoint{0.000000in}{-0.048611in}}%
\pgfusepath{stroke,fill}%
}%
\begin{pgfscope}%
\pgfsys@transformshift{5.363594in}{0.731884in}%
\pgfsys@useobject{currentmarker}{}%
\end{pgfscope}%
\end{pgfscope}%
\begin{pgfscope}%
\definecolor{textcolor}{rgb}{0.000000,0.000000,0.000000}%
\pgfsetstrokecolor{textcolor}%
\pgfsetfillcolor{textcolor}%
\pgftext[x=5.363594in,y=0.634661in,,top]{\color{textcolor}{\sffamily\fontsize{14.000000}{16.800000}\selectfont\catcode`\^=\active\def^{\ifmmode\sp\else\^{}\fi}\catcode`\%=\active\def%{\%}4}}%
\end{pgfscope}%
\begin{pgfscope}%
\definecolor{textcolor}{rgb}{0.000000,0.000000,0.000000}%
\pgfsetstrokecolor{textcolor}%
\pgfsetfillcolor{textcolor}%
\pgftext[x=3.214272in,y=0.390928in,,top]{\color{textcolor}{\sffamily\fontsize{14.000000}{16.800000}\selectfont\catcode`\^=\active\def^{\ifmmode\sp\else\^{}\fi}\catcode`\%=\active\def%{\%}$\lambda$}}%
\end{pgfscope}%
\begin{pgfscope}%
\pgfsetbuttcap%
\pgfsetroundjoin%
\definecolor{currentfill}{rgb}{0.000000,0.000000,0.000000}%
\pgfsetfillcolor{currentfill}%
\pgfsetlinewidth{0.803000pt}%
\definecolor{currentstroke}{rgb}{0.000000,0.000000,0.000000}%
\pgfsetstrokecolor{currentstroke}%
\pgfsetdash{}{0pt}%
\pgfsys@defobject{currentmarker}{\pgfqpoint{-0.048611in}{0.000000in}}{\pgfqpoint{-0.000000in}{0.000000in}}{%
\pgfpathmoveto{\pgfqpoint{-0.000000in}{0.000000in}}%
\pgfpathlineto{\pgfqpoint{-0.048611in}{0.000000in}}%
\pgfusepath{stroke,fill}%
}%
\begin{pgfscope}%
\pgfsys@transformshift{0.527620in}{0.731884in}%
\pgfsys@useobject{currentmarker}{}%
\end{pgfscope}%
\end{pgfscope}%
\begin{pgfscope}%
\definecolor{textcolor}{rgb}{0.000000,0.000000,0.000000}%
\pgfsetstrokecolor{textcolor}%
\pgfsetfillcolor{textcolor}%
\pgftext[x=0.306686in, y=0.658018in, left, base]{\color{textcolor}{\sffamily\fontsize{14.000000}{16.800000}\selectfont\catcode`\^=\active\def^{\ifmmode\sp\else\^{}\fi}\catcode`\%=\active\def%{\%}0}}%
\end{pgfscope}%
\begin{pgfscope}%
\pgfsetbuttcap%
\pgfsetroundjoin%
\definecolor{currentfill}{rgb}{0.000000,0.000000,0.000000}%
\pgfsetfillcolor{currentfill}%
\pgfsetlinewidth{0.803000pt}%
\definecolor{currentstroke}{rgb}{0.000000,0.000000,0.000000}%
\pgfsetstrokecolor{currentstroke}%
\pgfsetdash{}{0pt}%
\pgfsys@defobject{currentmarker}{\pgfqpoint{-0.048611in}{0.000000in}}{\pgfqpoint{-0.000000in}{0.000000in}}{%
\pgfpathmoveto{\pgfqpoint{-0.000000in}{0.000000in}}%
\pgfpathlineto{\pgfqpoint{-0.048611in}{0.000000in}}%
\pgfusepath{stroke,fill}%
}%
\begin{pgfscope}%
\pgfsys@transformshift{0.527620in}{2.156120in}%
\pgfsys@useobject{currentmarker}{}%
\end{pgfscope}%
\end{pgfscope}%
\begin{pgfscope}%
\definecolor{textcolor}{rgb}{0.000000,0.000000,0.000000}%
\pgfsetstrokecolor{textcolor}%
\pgfsetfillcolor{textcolor}%
\pgftext[x=0.306686in, y=2.082254in, left, base]{\color{textcolor}{\sffamily\fontsize{14.000000}{16.800000}\selectfont\catcode`\^=\active\def^{\ifmmode\sp\else\^{}\fi}\catcode`\%=\active\def%{\%}1}}%
\end{pgfscope}%
\begin{pgfscope}%
\pgfsetbuttcap%
\pgfsetroundjoin%
\definecolor{currentfill}{rgb}{0.000000,0.000000,0.000000}%
\pgfsetfillcolor{currentfill}%
\pgfsetlinewidth{0.803000pt}%
\definecolor{currentstroke}{rgb}{0.000000,0.000000,0.000000}%
\pgfsetstrokecolor{currentstroke}%
\pgfsetdash{}{0pt}%
\pgfsys@defobject{currentmarker}{\pgfqpoint{-0.048611in}{0.000000in}}{\pgfqpoint{-0.000000in}{0.000000in}}{%
\pgfpathmoveto{\pgfqpoint{-0.000000in}{0.000000in}}%
\pgfpathlineto{\pgfqpoint{-0.048611in}{0.000000in}}%
\pgfusepath{stroke,fill}%
}%
\begin{pgfscope}%
\pgfsys@transformshift{0.527620in}{3.580357in}%
\pgfsys@useobject{currentmarker}{}%
\end{pgfscope}%
\end{pgfscope}%
\begin{pgfscope}%
\definecolor{textcolor}{rgb}{0.000000,0.000000,0.000000}%
\pgfsetstrokecolor{textcolor}%
\pgfsetfillcolor{textcolor}%
\pgftext[x=0.306686in, y=3.506491in, left, base]{\color{textcolor}{\sffamily\fontsize{14.000000}{16.800000}\selectfont\catcode`\^=\active\def^{\ifmmode\sp\else\^{}\fi}\catcode`\%=\active\def%{\%}2}}%
\end{pgfscope}%
\begin{pgfscope}%
\definecolor{textcolor}{rgb}{0.000000,0.000000,0.000000}%
\pgfsetstrokecolor{textcolor}%
\pgfsetfillcolor{textcolor}%
\pgftext[x=0.251130in,y=2.725815in,,bottom,rotate=90.000000]{\color{textcolor}{\sffamily\fontsize{14.000000}{16.800000}\selectfont\catcode`\^=\active\def^{\ifmmode\sp\else\^{}\fi}\catcode`\%=\active\def%{\%}$L(\lambda)$}}%
\end{pgfscope}%
\begin{pgfscope}%
\pgfpathrectangle{\pgfqpoint{0.527620in}{0.731884in}}{\pgfqpoint{5.373305in}{3.987863in}}%
\pgfusepath{clip}%
\pgfsetrectcap%
\pgfsetroundjoin%
\pgfsetlinewidth{2.509375pt}%
\definecolor{currentstroke}{rgb}{0.121569,0.466667,0.705882}%
\pgfsetstrokecolor{currentstroke}%
\pgfsetdash{}{0pt}%
\pgfpathmoveto{\pgfqpoint{0.527620in}{1.039219in}}%
\pgfpathlineto{\pgfqpoint{0.871856in}{1.060697in}}%
\pgfpathlineto{\pgfqpoint{1.216091in}{1.084440in}}%
\pgfpathlineto{\pgfqpoint{1.587221in}{1.112331in}}%
\pgfpathlineto{\pgfqpoint{2.437053in}{1.177328in}}%
\pgfpathlineto{\pgfqpoint{2.636064in}{1.189545in}}%
\pgfpathlineto{\pgfqpoint{2.808182in}{1.197905in}}%
\pgfpathlineto{\pgfqpoint{2.964164in}{1.203247in}}%
\pgfpathlineto{\pgfqpoint{3.109388in}{1.206028in}}%
\pgfpathlineto{\pgfqpoint{3.254613in}{1.206540in}}%
\pgfpathlineto{\pgfqpoint{3.399837in}{1.204756in}}%
\pgfpathlineto{\pgfqpoint{3.550440in}{1.200585in}}%
\pgfpathlineto{\pgfqpoint{3.706422in}{1.194019in}}%
\pgfpathlineto{\pgfqpoint{3.878540in}{1.184552in}}%
\pgfpathlineto{\pgfqpoint{4.082930in}{1.171022in}}%
\pgfpathlineto{\pgfqpoint{4.357243in}{1.150468in}}%
\pgfpathlineto{\pgfqpoint{5.287755in}{1.079057in}}%
\pgfpathlineto{\pgfqpoint{5.631991in}{1.055804in}}%
\pgfpathlineto{\pgfqpoint{5.900925in}{1.039219in}}%
\pgfpathlineto{\pgfqpoint{5.900925in}{1.039219in}}%
\pgfusepath{stroke}%
\end{pgfscope}%
\begin{pgfscope}%
\pgfpathrectangle{\pgfqpoint{0.527620in}{0.731884in}}{\pgfqpoint{5.373305in}{3.987863in}}%
\pgfusepath{clip}%
\pgfsetrectcap%
\pgfsetroundjoin%
\pgfsetlinewidth{2.509375pt}%
\definecolor{currentstroke}{rgb}{1.000000,0.498039,0.054902}%
\pgfsetstrokecolor{currentstroke}%
\pgfsetdash{}{0pt}%
\pgfpathmoveto{\pgfqpoint{0.527620in}{2.156120in}}%
\pgfpathlineto{\pgfqpoint{5.900925in}{2.156120in}}%
\pgfpathlineto{\pgfqpoint{5.900925in}{2.156120in}}%
\pgfusepath{stroke}%
\end{pgfscope}%
\begin{pgfscope}%
\pgfpathrectangle{\pgfqpoint{0.527620in}{0.731884in}}{\pgfqpoint{5.373305in}{3.987863in}}%
\pgfusepath{clip}%
\pgfsetrectcap%
\pgfsetroundjoin%
\pgfsetlinewidth{2.509375pt}%
\definecolor{currentstroke}{rgb}{0.172549,0.627451,0.172549}%
\pgfsetstrokecolor{currentstroke}%
\pgfsetdash{}{0pt}%
\pgfpathmoveto{\pgfqpoint{0.527620in}{3.501381in}}%
\pgfpathlineto{\pgfqpoint{0.705116in}{3.489814in}}%
\pgfpathlineto{\pgfqpoint{0.861098in}{3.477421in}}%
\pgfpathlineto{\pgfqpoint{0.995565in}{3.464587in}}%
\pgfpathlineto{\pgfqpoint{1.119275in}{3.450568in}}%
\pgfpathlineto{\pgfqpoint{1.232227in}{3.435481in}}%
\pgfpathlineto{\pgfqpoint{1.334422in}{3.419554in}}%
\pgfpathlineto{\pgfqpoint{1.425860in}{3.403134in}}%
\pgfpathlineto{\pgfqpoint{1.511919in}{3.385520in}}%
\pgfpathlineto{\pgfqpoint{1.592599in}{3.366863in}}%
\pgfpathlineto{\pgfqpoint{1.673279in}{3.345921in}}%
\pgfpathlineto{\pgfqpoint{1.748581in}{3.324162in}}%
\pgfpathlineto{\pgfqpoint{1.823883in}{3.300177in}}%
\pgfpathlineto{\pgfqpoint{1.899184in}{3.273957in}}%
\pgfpathlineto{\pgfqpoint{1.979864in}{3.243491in}}%
\pgfpathlineto{\pgfqpoint{2.065923in}{3.208592in}}%
\pgfpathlineto{\pgfqpoint{2.168118in}{3.164691in}}%
\pgfpathlineto{\pgfqpoint{2.480082in}{3.028721in}}%
\pgfpathlineto{\pgfqpoint{2.560762in}{2.997012in}}%
\pgfpathlineto{\pgfqpoint{2.630685in}{2.971752in}}%
\pgfpathlineto{\pgfqpoint{2.700608in}{2.948887in}}%
\pgfpathlineto{\pgfqpoint{2.765152in}{2.930113in}}%
\pgfpathlineto{\pgfqpoint{2.829696in}{2.913713in}}%
\pgfpathlineto{\pgfqpoint{2.894241in}{2.899774in}}%
\pgfpathlineto{\pgfqpoint{2.953406in}{2.889207in}}%
\pgfpathlineto{\pgfqpoint{3.012572in}{2.880779in}}%
\pgfpathlineto{\pgfqpoint{3.071737in}{2.874502in}}%
\pgfpathlineto{\pgfqpoint{3.130903in}{2.870382in}}%
\pgfpathlineto{\pgfqpoint{3.190068in}{2.868419in}}%
\pgfpathlineto{\pgfqpoint{3.249234in}{2.868616in}}%
\pgfpathlineto{\pgfqpoint{3.308399in}{2.870970in}}%
\pgfpathlineto{\pgfqpoint{3.367565in}{2.875483in}}%
\pgfpathlineto{\pgfqpoint{3.426730in}{2.882152in}}%
\pgfpathlineto{\pgfqpoint{3.485896in}{2.890970in}}%
\pgfpathlineto{\pgfqpoint{3.545061in}{2.901924in}}%
\pgfpathlineto{\pgfqpoint{3.604227in}{2.914986in}}%
\pgfpathlineto{\pgfqpoint{3.668771in}{2.931588in}}%
\pgfpathlineto{\pgfqpoint{3.733315in}{2.950555in}}%
\pgfpathlineto{\pgfqpoint{3.797860in}{2.971752in}}%
\pgfpathlineto{\pgfqpoint{3.867782in}{2.997012in}}%
\pgfpathlineto{\pgfqpoint{3.943084in}{3.026532in}}%
\pgfpathlineto{\pgfqpoint{4.034522in}{3.064899in}}%
\pgfpathlineto{\pgfqpoint{4.168989in}{3.124162in}}%
\pgfpathlineto{\pgfqpoint{4.335728in}{3.197256in}}%
\pgfpathlineto{\pgfqpoint{4.432544in}{3.237124in}}%
\pgfpathlineto{\pgfqpoint{4.513225in}{3.268055in}}%
\pgfpathlineto{\pgfqpoint{4.593905in}{3.296568in}}%
\pgfpathlineto{\pgfqpoint{4.669206in}{3.320873in}}%
\pgfpathlineto{\pgfqpoint{4.744508in}{3.342946in}}%
\pgfpathlineto{\pgfqpoint{4.825188in}{3.364207in}}%
\pgfpathlineto{\pgfqpoint{4.905868in}{3.383158in}}%
\pgfpathlineto{\pgfqpoint{4.991927in}{3.401053in}}%
\pgfpathlineto{\pgfqpoint{5.083365in}{3.417735in}}%
\pgfpathlineto{\pgfqpoint{5.180181in}{3.433120in}}%
\pgfpathlineto{\pgfqpoint{5.287755in}{3.447876in}}%
\pgfpathlineto{\pgfqpoint{5.406086in}{3.461736in}}%
\pgfpathlineto{\pgfqpoint{5.535174in}{3.474541in}}%
\pgfpathlineto{\pgfqpoint{5.680399in}{3.486628in}}%
\pgfpathlineto{\pgfqpoint{5.841759in}{3.497785in}}%
\pgfpathlineto{\pgfqpoint{5.900925in}{3.501381in}}%
\pgfpathlineto{\pgfqpoint{5.900925in}{3.501381in}}%
\pgfusepath{stroke}%
\end{pgfscope}%
\begin{pgfscope}%
\pgfsetrectcap%
\pgfsetmiterjoin%
\pgfsetlinewidth{0.803000pt}%
\definecolor{currentstroke}{rgb}{0.000000,0.000000,0.000000}%
\pgfsetstrokecolor{currentstroke}%
\pgfsetdash{}{0pt}%
\pgfpathmoveto{\pgfqpoint{0.527620in}{0.731884in}}%
\pgfpathlineto{\pgfqpoint{0.527620in}{4.719747in}}%
\pgfusepath{stroke}%
\end{pgfscope}%
\begin{pgfscope}%
\pgfsetrectcap%
\pgfsetmiterjoin%
\pgfsetlinewidth{0.803000pt}%
\definecolor{currentstroke}{rgb}{0.000000,0.000000,0.000000}%
\pgfsetstrokecolor{currentstroke}%
\pgfsetdash{}{0pt}%
\pgfpathmoveto{\pgfqpoint{5.900925in}{0.731884in}}%
\pgfpathlineto{\pgfqpoint{5.900925in}{4.719747in}}%
\pgfusepath{stroke}%
\end{pgfscope}%
\begin{pgfscope}%
\pgfsetrectcap%
\pgfsetmiterjoin%
\pgfsetlinewidth{0.803000pt}%
\definecolor{currentstroke}{rgb}{0.000000,0.000000,0.000000}%
\pgfsetstrokecolor{currentstroke}%
\pgfsetdash{}{0pt}%
\pgfpathmoveto{\pgfqpoint{0.527620in}{0.731884in}}%
\pgfpathlineto{\pgfqpoint{5.900925in}{0.731884in}}%
\pgfusepath{stroke}%
\end{pgfscope}%
\begin{pgfscope}%
\pgfsetrectcap%
\pgfsetmiterjoin%
\pgfsetlinewidth{0.803000pt}%
\definecolor{currentstroke}{rgb}{0.000000,0.000000,0.000000}%
\pgfsetstrokecolor{currentstroke}%
\pgfsetdash{}{0pt}%
\pgfpathmoveto{\pgfqpoint{0.527620in}{4.719747in}}%
\pgfpathlineto{\pgfqpoint{5.900925in}{4.719747in}}%
\pgfusepath{stroke}%
\end{pgfscope}%
\begin{pgfscope}%
\pgfsetbuttcap%
\pgfsetmiterjoin%
\definecolor{currentfill}{rgb}{1.000000,1.000000,1.000000}%
\pgfsetfillcolor{currentfill}%
\pgfsetfillopacity{0.800000}%
\pgfsetlinewidth{1.003750pt}%
\definecolor{currentstroke}{rgb}{0.800000,0.800000,0.800000}%
\pgfsetstrokecolor{currentstroke}%
\pgfsetstrokeopacity{0.800000}%
\pgfsetdash{}{0pt}%
\pgfpathmoveto{\pgfqpoint{3.411732in}{3.471145in}}%
\pgfpathlineto{\pgfqpoint{5.764814in}{3.471145in}}%
\pgfpathquadraticcurveto{\pgfqpoint{5.803703in}{3.471145in}}{\pgfqpoint{5.803703in}{3.510034in}}%
\pgfpathlineto{\pgfqpoint{5.803703in}{4.583636in}}%
\pgfpathquadraticcurveto{\pgfqpoint{5.803703in}{4.622524in}}{\pgfqpoint{5.764814in}{4.622524in}}%
\pgfpathlineto{\pgfqpoint{3.411732in}{4.622524in}}%
\pgfpathquadraticcurveto{\pgfqpoint{3.372843in}{4.622524in}}{\pgfqpoint{3.372843in}{4.583636in}}%
\pgfpathlineto{\pgfqpoint{3.372843in}{3.510034in}}%
\pgfpathquadraticcurveto{\pgfqpoint{3.372843in}{3.471145in}}{\pgfqpoint{3.411732in}{3.471145in}}%
\pgfpathlineto{\pgfqpoint{3.411732in}{3.471145in}}%
\pgfpathclose%
\pgfusepath{stroke,fill}%
\end{pgfscope}%
\begin{pgfscope}%
\pgfsetrectcap%
\pgfsetroundjoin%
\pgfsetlinewidth{2.509375pt}%
\definecolor{currentstroke}{rgb}{0.121569,0.466667,0.705882}%
\pgfsetstrokecolor{currentstroke}%
\pgfsetdash{}{0pt}%
\pgfpathmoveto{\pgfqpoint{3.450621in}{4.462802in}}%
\pgfpathlineto{\pgfqpoint{3.645065in}{4.462802in}}%
\pgfpathlineto{\pgfqpoint{3.839510in}{4.462802in}}%
\pgfusepath{stroke}%
\end{pgfscope}%
\begin{pgfscope}%
\definecolor{textcolor}{rgb}{0.000000,0.000000,0.000000}%
\pgfsetstrokecolor{textcolor}%
\pgfsetfillcolor{textcolor}%
\pgftext[x=3.995065in,y=4.394747in,left,base]{\color{textcolor}{\sffamily\fontsize{14.000000}{16.800000}\selectfont\catcode`\^=\active\def^{\ifmmode\sp\else\^{}\fi}\catcode`\%=\active\def%{\%}$\mathrm{Unif}[-1, 1]$}}%
\end{pgfscope}%
\begin{pgfscope}%
\pgfsetrectcap%
\pgfsetroundjoin%
\pgfsetlinewidth{2.509375pt}%
\definecolor{currentstroke}{rgb}{1.000000,0.498039,0.054902}%
\pgfsetstrokecolor{currentstroke}%
\pgfsetdash{}{0pt}%
\pgfpathmoveto{\pgfqpoint{3.450621in}{4.165580in}}%
\pgfpathlineto{\pgfqpoint{3.645065in}{4.165580in}}%
\pgfpathlineto{\pgfqpoint{3.839510in}{4.165580in}}%
\pgfusepath{stroke}%
\end{pgfscope}%
\begin{pgfscope}%
\definecolor{textcolor}{rgb}{0.000000,0.000000,0.000000}%
\pgfsetstrokecolor{textcolor}%
\pgfsetfillcolor{textcolor}%
\pgftext[x=3.995065in,y=4.097525in,left,base]{\color{textcolor}{\sffamily\fontsize{14.000000}{16.800000}\selectfont\catcode`\^=\active\def^{\ifmmode\sp\else\^{}\fi}\catcode`\%=\active\def%{\%}$N(0, 1)$}}%
\end{pgfscope}%
\begin{pgfscope}%
\pgfsetrectcap%
\pgfsetroundjoin%
\pgfsetlinewidth{2.509375pt}%
\definecolor{currentstroke}{rgb}{0.172549,0.627451,0.172549}%
\pgfsetstrokecolor{currentstroke}%
\pgfsetdash{}{0pt}%
\pgfpathmoveto{\pgfqpoint{3.450621in}{3.754168in}}%
\pgfpathlineto{\pgfqpoint{3.645065in}{3.754168in}}%
\pgfpathlineto{\pgfqpoint{3.839510in}{3.754168in}}%
\pgfusepath{stroke}%
\end{pgfscope}%
\begin{pgfscope}%
\definecolor{textcolor}{rgb}{0.000000,0.000000,0.000000}%
\pgfsetstrokecolor{textcolor}%
\pgfsetfillcolor{textcolor}%
\pgftext[x=3.995065in,y=3.686113in,left,base]{\color{textcolor}{\sffamily\fontsize{14.000000}{16.800000}\selectfont\catcode`\^=\active\def^{\ifmmode\sp\else\^{}\fi}\catcode`\%=\active\def%{\%}$\frac{1}{2} N(0, 1) + \frac{1}{2} N(0, 2)$}}%
\end{pgfscope}%
\end{pgfpicture}%
\makeatother%
\endgroup%